\newtheorem{theorem}{Theorem}[section]
\newtheorem{proposition}[theorem]{Proposition}
\newtheorem{cor}[theorem]{Corollary}
\newtheorem{lemma}[theorem]{Lemma}
\newtheorem{example}[theorem]{Example}
\newtheorem{remark}[theorem]{Remark}
\newtheorem{claim}{Claim}[theorem]
\theoremstyle{definition}
\newtheorem{definition}[theorem]{Definition}
\newcommand{\co}{\colon\thinspace}
\newcommand{\Htwo}{\mathbb{H}^2}
\newcommand{\R}{\mathbb{R}}
\newcommand{\reals}{\mathbb{R}}
\newcommand{\PSLR}{\operatorname{PSL}_2(\mathbb{R})}
\newcommand\bu{\mathbf{u}}
\newcommand{\bv}{\mathbf{v}}
\newcommand\bw{\mathbf{w}}
\newcommand{\Z}{\mathbb{Z}}
\definecolor{bettergreen}{rgb}{0,0.6,0.4}
\definecolor{plainblue}{rgb}{.2,.2,1}
\begin{document}

\title[Walks With Jumps]{Walks With Jumps: a neurobiologically motivated  class of paths in the hyperbolic plane}

\author{Jason DeBlois}

\address{Department of Mathematics\\
University of Pittsburgh\\
301 Thackeray Hall\\
Pittsburgh, PA 15260}
\email{jdeblois@pitt.edu}

\author{Eduard Einstein}
\address{Swarthmore College \\ Department of Mathematics and Statistics \\ 500 College Ave. \\ 
Swarthmore, PA 19081}
\email{eeinste1@swarthmore.edu}

\author{Jonathan D. Victor}

\address{Feil Family Brain and Mind Research Institute\\
Weill Cornell Medicine\\
New York, NY 10065}
\email{jdvicto@med.cornell.edu}

\date{\today}

\begin{abstract}
We introduce the notion of a ``walk with jumps'', which we conceive as an evolving process in which a point moves in a space (for us, typically $\Htwo$) over time, in a consistent direction and at a consistent speed except that it is interrupted by a finite set of ``jumps'' in a fixed direction and distance from the walk direction. Our motivation is biological; specifically, to use walks with jumps to encode the activity of a neuron over time (a ``spike train``). Because (in $\Htwo$) the walk is built out of a sequence of transformations that do not commute, the walk's endpoint encodes aspects of the sequence of jump times beyond their total number, but does so incompletely. The main results of the paper use the tools of hyperbolic geometry to give positive and negative answers to the following question: to what extent does the endpoint of a walk with jumps faithfully encode the walk's sequence of jump times?
\end{abstract}

\maketitle

\section{Introduction}

This paper introduces the notion of a \textit{walk with jumps}, which we conceive as an evolving process in which a point moves in a space (for us, typically $\Htwo$) for a fixed duration, in a consistent direction and at a consistent speed except that it is interrupted by a finite set of stereotyped ``jump'' events, each of which moves a fixed distance at a fixed angle from the walk direction. The walk's endpoint encodes partial information about the timing of the jumps, and a main goal here is to offer some precise answers to the question of how it does so.

Our biological motivation, laid out in greater detail in Section \ref{biology}, is to use walks with jumps as a novel abstraction of how a neuron represents information, with the walk itself capturing the neuron's activity over time, and the walk's endpoint considered as an invariant of this activity. Since neural activity consists of a stereotyped series of action potentials that differ only in their timing, the action potentials need to correspond to jumps that are identical in angle and size. 

While the walk with jumps model may somewhat resemble random walk or random flight models, which are well-studied (see eg.~\cite{OrsingherDeGregorioRandomFlights}), we believe that this resemblance is largely superficial and this model's features are substantially different. For instance the fixed jump length and jump angle, choices that follow from the biological motivation, imply that any walk with jumps traces out an \emph{embedded} path in $\mathbb{H}^2$ that is a quasi-geodesic (by Corollary \ref{angle bound} and Proposition \ref{trig qg}, respectively). Indeed, a primary initial motivation for the model's construction was to provide a continuous interpolation for collections of paths that traverse edges of a tree, and embedding trees in $\mathbb{H}^2$ is a natural choice for this purpose. Corollary \ref{the ned} gives an explicit form in which the model successfully accomplishes this goal.

Despite the rigid constraints on the nature of jumps, walks with jumps in $\mathbb{H}^2$ exhibit a rich collection of behaviors as captured by their endpoints: under broad circumstances, the collection of endpoints is the closure of its interior (Corollary \ref{rich behavior}) and has large diameter (Remark \ref{diameter bound}). This reflects fundamental structural differences between the isometry groups of $\mathbb{H}^2$ and $\mathbb{R}^2$. Unlike in the Euclidean case, where the set of translations forms an abelian normal subgroup, hyperbolic translations do not in general commute, and their product is not necessarily a translation. It is due to this that changing only the \emph{timing} of the jumps---not the angle or length---can profoundly affect the walk with jumps's endpoint. 

Another point of departure from existing mathematical literature is that, in keeping with our biological motivation, we focus on finite-time behavior of walks with jumps (hence a finite number of jumps), rather than their asymptotic behavior, and we make no assumptions about the statistics of the jumps (e..g, whether the intervals are Poisson).  In an interval over which information is accumulated to create a percept (typically less than 0.5 sec), a typical neuron may discharge  10 or fewer action potentials, and firing patterns typically differ substantially from Poisson \cite{MochizukiShinomoto}.

 Ultimately, we are interested in comparing the characteristics of the walks with jumps model to findings in the experimental literature. Here however we focus on establishing basic facts about our construction using classical techniques of hyperbolic geometry and geometric group theory. We now give the formal definition.

\begin{definition}\label{D: walkswithjumps}
A \textbf{walk with jumps} is specified by the following \textbf{initial data}:\begin{itemize} 
    \item an \textbf{origin} $o\in\Htwo$, unit tangent vector $\mathbf{v}$ at $o$, a \textbf{walk speed} $s>0$, \textbf{jump angle} $\theta\in(-\pi,\pi)$, and \textbf{jump length} $\ell>0$;\end{itemize}
And the following \textbf{jump data}:\begin{itemize}
    \item a \textbf{duration} $T>0$, sequence $(t_1,\hdots,t_n)$ of \textbf{jump times} ($n\geq 0$), where 
    \[ 0 \le t_1 <\hdots < t_n \le T,\] 
    and a \textbf{burst vector} $(j_1,\hdots,j_n)$ with natural number entries, taken to be $(1,\hdots,1)$ if not otherwise specified. \end{itemize}
 The walk's \textbf{number of jumps} is $N = \sum_{i=1}^n j_i$; we also call it a \textbf{walk with $N$ jumps}.
\end{definition}
    
This data encodes the following set of instructions for movement:

\begin{quote} Beginning at $o$, walk along the geodesic ray in the direction of $\mathbf{v}$ at speed $s$ until time $t_1$. At time $t_1$, instantly jump a distance of $\ell\cdot j_1$ along an axis through the current location at a counterclockwise angle of $\theta$ to the original; then proceed at speed $s$ along a geodesic ray at angle $-\theta$ to the jump axis. Continue until time $t_2$; jump a distance $\ell\cdot j_2$ along an axis at angle $\theta$ to the current one; then proceed right again along an axis at angle $-\theta$ to the jump axis, etc.\end{quote}

Having executed the instructions above and halted at time $T$, our avatar's final location is the \textbf{endpoint} of the walk with jumps. It carries the initial tangent vector $\bv$ with it by parallel transport, finally yielding a tangent vector at the endpoint which we call the \textbf{endvector}.

\begin{remark}\label{burst vectors}
In Definition~\ref{D: walkswithjumps}, we choose to use burst vectors to encode multiple jumps at a single time rather than allowing consecutive jump times to be equal. While these two different approaches are equivalent, using burst vectors will be technically convenient when we write walks with jumps as words in isometries of $\Htwo$ specified by the data $o$, $\textbf{v}$, $s$, $\theta$, $\ell$ and $T$, see Section~\ref{S: first obs}. 
\end{remark}

The central question of this paper is of the extent to which the jump data of a walk with jumps can be recovered from its endpoint $p$, or the finer information $(p,\bw)$, where $\bw\in T_p\mathbb{H}^2$ is the endvector, for a fixed set of initial data. We offer a spectrum of partial answers to this question in Sections \ref{S: same endpts}, \ref{subsec:more diff} and \ref{S: difft endpts}, in both positive and negative directions, reflecting the truism that hyperbolic space is ``Euclidean at small scales and treelike at large scales''. What this should mean for walks with jumps is fleshed out in Section \ref{S: examples for contrast}. We show there that for the analog of a walk with jumps construction in the Euclidean plane, the endpoint is determined solely by the duration and number of jumps. In contrast, for the analogous construction in a tree, any two distinct jump \textit{patterns} produce distinct endpoints.

Section \ref{S: same endpts} develops calculus tools for understanding how perturbations of a walk's jump times affect its endpoint-endvector pair. In Section \ref{S: first obs} we prove Lemma \ref{wwj via isoms}, a basic fact describing how the data of a walk with jumps uniquely prescribes an isometry carrying its origin-initial vector pair $(o,\bv)$ to $(p,\bw)$. Using this we define a differentiable map whose domain is a simplex $T_k$ encoding all walks with $k$ jumps and a fixed duration that share initial data, and whose target space is $\operatorname{SL}_2(\mathbb{R})$, viewed as (a double cover of) the unit tangent bundle of $\mathbb{H}^2$.

The main technical result of Section \ref{S: same endpts}, Proposition \ref{mo buttah} shows that the derivative of this map has full rank on a dense open subset $U$ of $T_k$ for all $k\ge 3$, provided the jump angle $\theta$ is at most $\pi/2$. Several Corollaries follow, including \ref{C: different walks same endpoints} which asserts that for $k>3$, if a walk's sequence of jump times lies in $U$ then there exist arbitrarily small perturbations of this sequence yielding walks with jumps sharing its endpoint and endvector. Corollary \ref{rich behavior} asserts that for $k\ge 3$, the set of endpoints of walks with jumps is the closure of its interior in $\mathbb{H}^2$.

In Section \ref{subsec:more diff} we move beyond perturbations, constructing walks with significantly different jump patterns that still share an endpoint-endvector pair. Proposition \ref{same duration jumps endvector} describes such pairs sharing the same initial data, number of jumps, and duration, but with arbitrarily large gaps between jump times of the first and the second. This construction does rely, however, on there being not too large a gap between the first jump times of the two walks with jumps. In Section \ref{subsec: sepcrit} we give conditions that eliminate this possibility.

But first, in Section \ref{S: difft endpts} we establish some basic structure results by applying the tools of hyperbolic geometry to a class of paths associated to walks with jumps.

\begin{definition}\label{D: walkswithjumpspath}
For a walk with jumps with the data of Definition~\ref{D: walkswithjumps}, the associated \textbf{walk-with-jumps path} is the broken geodesic in $\Htwo$ obtained by joining the origin $o$ to the pre-jump location $p_1$ at time $t_1$; joining $p_1$ to the post-jump location $q_1$ at time $t_1$; then joining $q_1$ to the pre-jump location $p_2$ at time $t_2$ and so on. Its $i$th \textbf{walk segment} joins $q_{i-1}$ to $p_i$ (or $o$ to $p_i$, if $i=1$, or $q_n$ to the endpoint, if $i = n+1$), and its $i$th \textbf{jump segment} joins $p_i$ to $q_i$. We parametrize it continuously on $\left[0,sT + \ell\cdot \sum_{i=1}^n j_i\right]$, mapping $0$ to $o$, by parametrizing each geodesic segment in turn by arclength.\end{definition}

A quick inductive argument shows that the pre- and post-jump locations  $p_i$ and $q_i$ at time $t_i$ occur at parameter values $st_i + \ell\sum_{k=1}^{i-1} j_k$ and $st_i + \ell\sum_{k=1}^i j_k$, respectively, for $i\in\{1,\hdots,n\}$. 

\begin{remark}\label{R: endvector} The endpoints of the walk-with-jumps path associated to a walk with jumps are the origin $o$ and the endpoint of the walk with jumps itself, as defined above. Moreover, its endvector is the outward-pointing unit tangent vector to the final segment of the walk-with-jumps path or, if the final jump time $t_n$ equals $T$, the unit tangent vector at an angle of $-\theta$ from that vector.
\end{remark}

In Section \ref{subsec: no return} we lay the groundwork for our further results by associating two collections of pairwise disjoint geodesics, the \textit{walk} and \textit{jump axes}, to each walk with jumps. The main result of the subsequent Section \ref{subsec: qg}, Proposition \ref{trig qg} asserts that any walk-with-jumps path is a \textit{quasigeodesic}, meaning the hyperbolic distance between any two points on it is well-approximated by their distance along the path (see Definition \ref{D: qg}). This implies that walks with the same duration but different enough numbers of jumps have different endpoints, see Corollary \ref{C: jump simplices}. 

The main result of Section \ref{subsec: sepcrit} shows that a pair of conditions, which we now define, ensures that distinct walks with jumps have different endpoints.

\newcommand\DistToResEps{
For $\epsilon > 0$, two walks with jumps sharing initial data and a duration $T>0$ are \textit{distinct to resolution $\epsilon$} if their sequences of jump times $(s_1,\hdots,s_m)$ and $(t_1,\hdots,t_n)$ satisfy:\begin{enumerate}
    \item for each $i\le \min\{m,n\}$, either $s_i = t_i$ or $|s_i-t_i|>\epsilon$; and
    \item there exists some $i$ such that $s_i\ne t_i$, or $m\ne n$.
\end{enumerate}
For $R_{min}>0$, a walk with jumps has \textit{minimum refractory length $R_{min}$} if its burst vector is $(1,\hdots,1)$---ie.~it has no bursts---and for $i\ne j$, $|t_i-t_j|\ge R_{min}$.}
\begin{definition}\DistToResEps\end{definition}

While these conditions may not appear geometrically natural, they are meaningful in biological contexts in which it is not possible to discern differences at arbitrarily small resolutions, and in which physical constraints prevent a neuron from firing again too soon after it has fired once. At suitable scales, they prohibit distinct walks with jumps from having the same endpoint.

\newcommand\SepCriterion{Given $R_{\min}>0$, the initial data $(o,\bv,s,\theta=\pi/2,\ell)$ of a walk with jumps, and a duration $T>0$, let
\[ \epsilon = \frac{1}{s}\cosh^{-1}\left(\frac{\cosh (sR_{min})+1}{\cosh (sR_{min})-\tanh^2\ell} \right).\]
Any two walks with jumps having the given initial data, duration $T$, and minimum refractory length $R_{min}$ that are distinct to resolution $\epsilon$, have distinct endpoints.
}

\begin{theorem}\label{sep criterion}\SepCriterion\end{theorem}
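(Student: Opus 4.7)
The plan is to argue by contradiction: suppose two walks $W_1$ and $W_2$ with the hypothesized data share an endpoint, and then derive that the first disagreement in their jump times must be smaller than $\epsilon$.

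Write the jump time sequences as $(s_i)_{i=1}^m$ and $(t_j)_{j=1}^n$, and let $i_0$ be the smallest index at which they disagree (treating a missing entry as lying past $T$). After relabeling, $s_{i_0} < t_{i_0}$, and the resolution hypothesis gives $\delta := t_{i_0} - s_{i_0} > \epsilon$. The two walks trace identical walk-with-jumps paths up through time $s_{i_0}$. Applying an isometry (using Lemma~\ref{wwj via isoms} to encode walks as compositions of isometries), I first normalize so that the common position-tangent pair at time $s_{i_0}$ is $(o,\bv)$; the remainders of the two walks, viewed as walks with jumps of duration $T' = T - s_{i_0}$, must then share an endpoint. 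Call these suffix walks $W_1'$ (first jump at time $0$) and $W_2'$ (first jump at time $\delta$); both still satisfy the minimum refractory length hypothesis.

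The core of the argument is a right-angle geometric lemma. After its initial jump of length $\ell$, the first walk segment of $W_1'$ lies on a geodesic $\gamma_1$ sharing a common perpendicular of length $\ell$ with the geodesic $\gamma_2$ along which $W_2'$ is still travelling (this is precisely where $\theta = \pi/2$ is used). Applying the Saccheri-quadrilateral identity $\cosh D = \cosh^2(sx)\cosh\ell - \sinh^2(sx)$ for the distance between points at common arclength $sx$ along $\gamma_1$ and $\gamma_2$ from their perpendicular feet, I would compute the relative position of $W_1'$ and $W_2'$ at the critical moment when $W_2'$ first jumps (time $\delta$), as well as at the moment $W_1'$ is next allowed to jump (no earlier than time $R_{min}$). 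Demanding that these constraints be compatible with a shared endpoint yields an inequality on $\cosh(s\delta)$ that, after algebraic manipulation, becomes precisely $\cosh(s\delta) \le \tfrac{\cosh(sR_{min})+1}{\cosh(sR_{min}) - \tanh^2\ell}$, i.e.\ $\delta \le \epsilon$, the desired contradiction.

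The main obstacle I foresee is controlling the long-range effect of subsequent jumps, which could in principle steer the walks back toward one another. I expect this is where the refractory-length hypothesis and Proposition~\ref{trig qg} enter decisively: $R_{min}$ guarantees that after each jump the walker traverses a geodesic segment of length at least $sR_{min}$ before the next jump, and the quasigeodesic estimate forbids any short-range reunion of the two walk-with-jumps paths on the scale these segments force. Reducing the full scenario---potentially with many subsequent jumps on both sides---to the two-geodesic model above will require a careful induction or a monotonicity argument that isolates the ``worst case'' as the single-jump comparison, after which the explicit trigonometric identity involving $\cosh(sR_{min})$ and $\tanh^2\ell$ finishes the job.
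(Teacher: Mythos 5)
Your reduction to the first disagreement and normalization by an isometry (using Lemma~\ref{wwj via isoms}) mirrors the paper's strategy in Theorem~\ref{precise sep criterion}, and your instinct that the Lambert/Saccheri quadrilateral with sides $\ell$ and $s\delta$ is the source of the $\cosh(s R_{\min})$ and $\tanh^2\ell$ terms is on target. But the obstacle you flag---controlling the long-range effect of subsequent jumps---is exactly where your proposal stops being a proof, and the two tools you propose to close it do not do the job. Proposition~\ref{trig qg} (quasigeodesicity) gives only a lower bound on distance as a function of arclength separation along a \emph{single} path; it says nothing about whether two distinct walk-with-jumps paths can cross or reunite, and indeed it plays no role in the paper's argument here. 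Similarly, ``demanding that these constraints be compatible with a shared endpoint yields an inequality on $\cosh(s\delta)$'' is not an argument: you need a concrete mechanism preventing the many later jumps of $W_2'$ (and of $W_1'$) from re-aligning the two endpoints.

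The paper's actual mechanism is a half-plane separation argument, not a distance estimate. Lemma~\ref{sep criterion case 0} shows that, once the second walk's first jump is at time $\ge\epsilon$, its \emph{entire} walk-with-jumps path stays on one side of the first walk's post-jump walk axis $\rho$. This is proved by an induction on the angles $\xi_i$ at which the second walk's jump axes meet $\rho$ (angle sums of quadrilaterals force $\xi_i\le\delta$), combined with Lemma~\ref{mabel}, which bounds the angle $\eta_i$ of the chord $q_iq_{i+1}$ against the walk direction at $q_i$ in terms of $R_{\min}$. The choice of $\epsilon$ is exactly what makes $\eta_i\le\pi/2-\delta$, so that the chord and $\rho$ can never close up a compact triangle with the jump axis. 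Then Corollary~\ref{quadrants} (future quadrants) walls the rest of each path off behind its current walk and jump axes, letting Theorem~\ref{precise sep criterion} propagate the special case to the general one. You would need to supply something with this separating force---for instance a lemma like \ref{mabel} and an induction on chord angles---before the explicit trigonometric identity can ``finish the job.'' As written, your proposal identifies the right starting picture but leaves the hard step as an IOU.
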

\theoremstyle{plain}
\newtheorem*{sep criterion thrm}{Theorem \ref{sep criterion}}

Note that for any fixed positive $s$ and $\ell$, $\epsilon$ defined as above can be made arbitrarily small by choosing $R_{\min}$ large enough. We prove a stronger version of Theorem \ref{sep criterion} in Section \ref{subsec: sepcrit} as Theorem \ref{precise sep criterion}. This more precise version also yields Corollary \ref{the ned}, on the existence of trees whose edges are traversed by walk-with-jumps paths.

Section \ref{sec: FurQue} lists further questions, including some that are biologically motivated and others motivated by the hyperbolic-geometric phenomenon called \emph{quasigeodesic stability}, see Proposition \ref{P: stability}.

\subsection*{Acknowledgements} JD and EE gratefully acknowledge support from the Swartz Foundation during the initial phase of this work. JDV is supported by NIH EY07977 and NSF  2014217. We thank H. Anuradha Ekanayake, Arshia Gharagozlou, and Mark Fincher for helpful conversations, and Mark for supplying Figures \ref{F: visualizing spike trains}, \ref{F: one jump}, and \ref{future quadrants}.

    \subsection{Biological Motivation}\label{biology}
    
We are proposing the walks-with-jumps model as a theoretical framework for modeling neural activity and how it evolves in time. Standard approaches assume (often implicitly) that this evolution occurs in a space with an underlying Euclidean geometry; the material here provides a  foundation for replacing this space by a hyperbolic one. 
 
    As is standard, we idealize the activity of a single neuron during a time interval as a sample drawn from a point process, i.e., a sequence of stereotyped electrophysiological events (action potentials) which together constitute a ``spike train``.  
    
     \begin{definition}\label{D: spike train}
    Let $t\in \R^+$ and let $k\in\mathbb{N}$. 
    A \textbf{spike train (over the period [0,t]) with $k$ spikes} is a sequence $t_1 \le t_2 \le t_3 \le t_4 \le \ldots \le t_k$ with each $t_i\in [0,t]$. We say that \textbf{its spikes occur} at the times $t_i$.
    \end{definition}
    
\noindent That is, a neuron producing the above spike train has spikes at the times $t_i$ and is quiescent  over the periods from $t_i$ to $t_{i+1}$ for each $0\le i\le k-1$. In our model, the information represented by the neuron over the time interval $[0,t]$ is fully contained in its set of spike times.
    
    \begin{remark}\label{R:bursts} Some neurons exhibit \mbox{\rm bursts}: sequences of spikes in very short succession.  In the limit of a vanishingly small interspike interval, we can model a burst as a single spike with a magnitude that is a natural number multiple of the stereotypical spike's.\end{remark}
 
       A neuron's activity influences and is influenced by that of others in its network. A central challenge of systems neuroscience is to understand and model how the aggregate activity of a network of such neurons performs cognitive tasks, such as representing the external world or making decisions. Here, we are motivated by considerations about how the activity of individual neurons may contribute to these processes. 
       For example, we can ask the following fundamental question: what way of viewing a spike train captures the ``right'' amount and quality of information processed by the neuron? An intuitive constraint on the answer is the fact that neural activity is a biological process involving some level of noise and imprecision, so for instance a small variation in the exact timing of its spikes should result in a small change in what this activity represents.
       
    An important stepping stone for pursuing this question is the notion of a ``perceptual space`` \cite{ZaidiEtAl} -- a mathematical object capturing the mind's internal representation of a sensory domain, whose structure (e.g. topological, metric, or linear) reflects subjective notions of similarity and difference.

    A classic example is the perceptual space of color \cite{ZaidiEtAl}.  It is well-established that for (non-colorblind) humans, this is a three-parameter space \cite{LeeReview2008}. More specifically, colors may be described in terms of real-valued coordinates along three ``cardinal'' perceptual axes  -- black vs. white, red vs. green, and blue vs. yellow. To the extent that this space can be regarded as Euclidean -- which is clearly an approximation -- there is a reasonably satisfying understanding of how it is represented by neural activity  (we omit many interesting and important physiological details here; see \cite{DerringtonKrauskopfLennie,SolomonLennie}). 
    In brief, light is transformed into neural signals by three types of photoreceptors, each with its own wavelength-dependent sensitivity to light. Subsequently, neural circuitry in the eye and brain recombines these signals in a way that is, to a first approximation, linear. This leads to coordinates which, while not directly corresponding to the three cardinal axes, can be thought of as a different basis set for the same Euclidean space.  With these ingredients, it is straightforward to see how spike trains could represent the perceptual space of color: it suffices to postulate that neurons correspond to axes in a three-dimensional space, and the total number of spikes within a time interval is the coordinate value represented by a neuron.
    
    Note that in the model above for the color vision perceptual space, the useful information content of a neuron's activity is entirely captured by a coarse invariant of its spike train, the total spike number. We take this to reflect that space's relatively simple structure. However, other perceptual spaces appear to be qualitatively different, and the relationship  between their structure and the underlying neural activity is much less clear.
   
   A key motivating example for this paper is the human olfactory system. Here there are several hundred kinds of receptors \cite{HasinBrumshteinReview}, in contrast to the three kinds of receptors relevant to color vision. Furthermore, the perceptual space of odors likely has intrinsically non-Euclidean geometry \cite{ZhouSmithSharpee}. So the idea that spike counts behave like Euclidean coordinates, although sufficient to account for how the perceptual space of color might be represented, is likely inadequate to capture the olfactory perceptual space's complexity.
   
    A simple but important elaboration on this picture is that additional information is contained in the timing of individual spikes. Many experimental studies have shown that this is the case, e.g., \cite{JacobsNirenbergRulingInAndOut,ReichMechlerVictor,RichmondOptican1990}. To formalize this idea, a spike train can be represented as a counting process, i.e., a path in which time without a spike consists of movement in one direction, and a spike event corresponds to a jump in a different direction, see e.g. Figure~\ref{F: visualizing spike trains}. Two spike trains then convey similar messages when their paths are similar. Most often, similarity of spike trains is assumed to be computed by embedding them in a vector space and computing the  Euclidean distance between them \cite{RichmondOptican1990}, or, via a ``spike time'' metric.  The latter is an ``edit-length distance'' \cite{Sellers} given by the minimum ``cost'' to morph one spike train into another by inserting or deleting spikes, or shifting them in time \cite{VictorPurpura1997}.

    However, merely taking spike timing into account does not solve the problem of representing a perceptual space with hyperbolic characteristics. This is because even though the spike time distances are non-Euclidean \cite{AronovVictor2004}, they (as well as distances derived from vector-spaces) obey a superposition property:  using $+$ to denote superposition of spike trains, $D(A,B)=D(A+X,B+X)$. Specifically, inserting a spike at a fixed time to both spike trains will not change the distance between them.

    Thus, to be able to capture the hyperbolic geometry that appears to characterize some perceptual spaces, a further generalization is needed. This is our walk-with-jumps model (Figure~\ref{F: visualizing spike trains}C):  spike trains are represented by a path that evolves in time, but the two motions corresponding to time without a spike, and the presence of a spike, are isometries in  $\Htwo$. Note that, because of the non-commutativity of isometries in  $\Htwo$, the superposition property no longer holds.  Moreover, the endpoint of a path in $\Htwo$ depends not only on the number of spikes, but also on their arrangement in time. We elaborate on this below.

    A separate biological motivation for the walk-with-jumps model is that it provides a new way of viewing the neural substrate of decision-making.  There are currently two main conceptual frameworks for this.  One is that competing populations of neurons accumulate evidence  (i.e., spikes), until one of them reaches a threshold \cite{GoldShadlen}. A second framework views neural population activity as a dynamical system whose evolution in time is determined by interactions of excitatory and inhibitory neural signals. In this view, decision-making corresponds to entry into an attractor's basin, from which there is no return \cite{WangXJ}. Here, we note that if neural activity evolves in $\Htwo$, then a ``point-of-no-return`` property 
    means that such decisions can result merely from the intrinsic geometry of the space in which neural activity evolves. As we show in Theorem \ref{sep criterion}, this behavior is typical of walks with jumps in $\Htwo$: if two walks with jumps  are sufficiently different up to some time point, then future extensions of them cannot intersect.  
    
    Finally, we note that the geometry of $\Htwo$ ensures that the dynamics of the walks-with-jumps model has tree-like features. First, the volume of potential endpoints grows exponentially as a function of the distance from the origin. While the continuous nature of the model will accommodate noise in the exact timing of the spikes, any significant local deviations in the timings of the spikes will lead to walks-with-jumps paths with very different endpoints. This tree-like behavior is natural for modeling the coarse-to-fine characteristics of visual perception  \cite{Hegde2008}, as well as perceptual spaces with semantic content.

\subsection{Comparing models of a single neuron's activity} \label{S: examples for contrast}\
    
   \begin{figure}[ht]
    \centering
    \def\svgwidth{\columnwidth}
    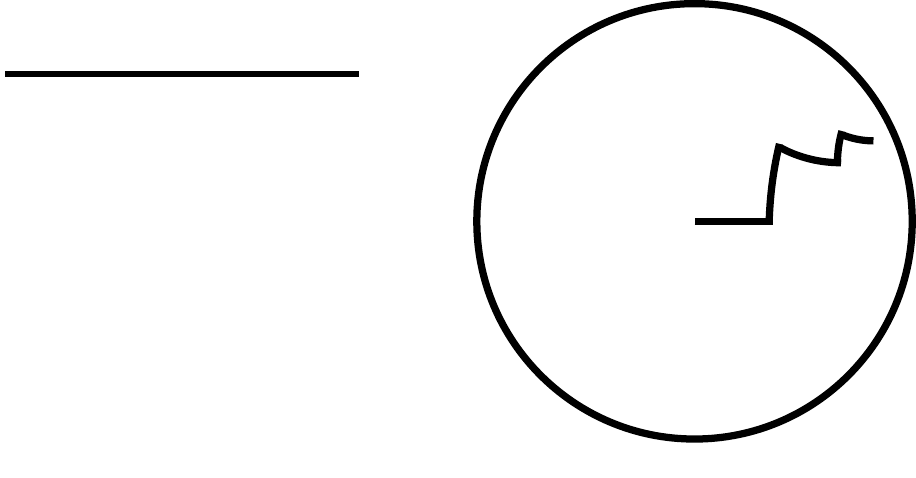
    \caption{This figure shows three different ways of visualizing a spike train.}
    \label{F: visualizing spike trains}
\end{figure}
 
    Figure \ref{F: visualizing spike trains}A is a standard conceptualization of a neuron's activity over a time period $[0,T]$, during which it fires spikes at $t_1 < t_2 < T$ (as defined in \ref{D: spike train}). An equivalent conceptualization is shown in  Figure \ref{F: visualizing spike trains}B:  activity is represented by the counting process $F(t)$, where $F(t) = \int_0^t f(s)\,\mathit{ds}$ and $f(t)$ is a sequence of (unit-mass) delta-functions corresponding to the point process schematized in Figure \ref{F: visualizing spike trains}A.  
    
    This second conceptualization has another interpretation:  the neuron's activity is motion of a point along a path in $\mathbb{R}^2$, in which the point moves in one direction at constant speed when there is no spike, and then abruptly jumps in another (orthogonal) direction at the times that a spike occurs. 
    
    That is, Figure \ref{F: visualizing spike trains}B conceptualizes a neuron's activity as  a ``walk with jumps'', as in Definition \ref{D: walkswithjumps} but in $\mathbb{R}^2$ not $\mathbb{H}^2$, with origin $(0,0)$, unit tangent vector $(1,0)$, walk speed $1$, jump angle $\pi/2$, duration $T$, and jump length $M$. For comparison, Figure \ref{F: visualizing spike trains}C shows the walk with jumps with the same data, but in $\mathbb{H}^2$.
    
In this section we compare and contrast qualitative features of ``walk with jumps`` style constructions in various metric contexts, beginning below with that of $\mathbb{R}^2$.  As we will show, for trajectories in $\mathbb{R}^2$, the endpoint is determined solely by the number of spikes and the time interval.  That is, two spike trains with the same endpoint can have very different firing patterns.  However, in $\mathbb{H}^2$, the arrangement of the spikes also influences the endpoint -- a feature that is appealing from the point of view that in neural circuits, spike timing, as well as spike count, conveys information.  Our overall focus is on the relationship between the endpoint and the path, for example, to what extent do spike trains with the same endpoint necessarily have similar paths?
 
  \begin{example}\label{E: eucl}
  Consider a walk with jumps as in Definition \ref{D: walkswithjumps}, but in $\mathbb{R}^2$ not $\mathbb{H}^2$, with origin $(0,0)$, unit tangent vector $(1,0)$, walk speed $1$, jump angle $\pi/2$, jump length $1$, duration $T$, and jump times $0\leq t_1 \leq t_2 \leq \cdots \leq t_k \leq T$. 
Observe that the corresponding walks with jumps path has endpoint:
\[(T,k) = (t_1,0) + \sum_{i=1}^{k}\left( (t_{i+1}-t_i),1\right) + (T-t_k,0) \]
That is, the path's endpoint is determined entirely by the walk's duration and number of spikes. 
\end{example}

In Example~\ref{E: eucl}, the walk path endpoints do not take into account  the timing of individual spikes in a spike train. In contrast, spike timing does affect path endpoints for walks with jumps in $\mathbb{H}^2$. The result below uses hyperbolic trigonometry to quantify the effect produced by changing the timing of a single spike, when the jump angle is $\pi/2$. In fact we show that if two such walks have a large enough gap in the timing of their first spikes, then they can never have the same endpoint.
    
\begin{proposition}\label{distinct endpoints} Consider two walks with jumps in $\mathbb{H}^2$, sharing initial data $o$, $\mathbf{v}$, $s$, jump angle $\theta = \pi/2$, and jump length $\ell>0$. If each walk has a single jump, and this occurs at time $0$ in the first and at time $T$ in the second, then the distance $d$ between the two walks' endpoints satisfies:
\[ \cosh(d)-1 = (\cosh(\ell)-1)(\cosh(sT)-1)(\cosh(\ell)\cosh(sT)-1).\]
Regardless of how many jumps each has, if the first walk has its first jump at time $0$ and the second has its first jump at time $t_1$ satisfying
\[ \sinh(\ell)\sinh(s\, t_1)\ge 1, \]
then the two walks have distinct endpoints.
\end{proposition}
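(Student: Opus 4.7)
The plan is to dispatch the two assertions of the proposition separately, using matrix representatives of walks and jumps in $\operatorname{SL}_2(\mathbb{R})$.

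For the first assertion I would work in the upper half-plane with $o = i$ and $\mathbf{v}$ the unit upward tangent, so that the duration-$T$ walking isometry is $g_W(T) = \operatorname{diag}(e^{sT/2}, e^{-sT/2})$ and the jump isometry (translation by $\ell$ along the unit upper semicircle, perpendicular to $\mathbf{v}$ at $o$) is
\[ g_J = \begin{pmatrix} \cosh(\ell/2) & -\sinh(\ell/2) \\ -\sinh(\ell/2) & \cosh(\ell/2)\end{pmatrix}.\]
Then $E_1 = g_J g_W(T)(i)$, $E_2 = g_W(T)g_J(i)$, and $d(E_1,E_2) = d(i, M(i))$ with $M = g_W(-T) g_J^{-1} g_W(T) g_J$. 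Using the identity $2\cosh d(i, N(i)) = \operatorname{tr}(N^T N)$ for $N \in \operatorname{SL}_2(\mathbb{R})$, I would compute $\operatorname{tr}(M^T M)$ as a sum of squared entries in terms of $c = \cosh(\ell/2)$, $b = \sinh(\ell/2)$, and $E = e^{sT/2}$, then simplify using $c^2 - b^2 = 1$, $c^2 + b^2 = \cosh\ell$, $2bc = \sinh\ell$, together with $E^2 + E^{-2} = 2\cosh sT$ and $E^4 + E^{-4} = 2\cosh 2sT$, to arrive at the factored form $\cosh d - 1 = (\cosh\ell - 1)(\cosh sT - 1)(\cosh\ell \cosh sT - 1)$. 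A parallel trigonometric route uses hyperbolic Pythagoras on the congruent right triangles $\triangle o q_1 E_1$ and $\triangle o p_1 E_2$ (each with legs $\ell$ and $sT$ meeting at the right angle) to get $\cosh d(o,E_i) = \cosh\ell \cosh sT$, then applies the hyperbolic law of cosines in $\triangle o E_1 E_2$ with the angle at $o$ computed from standard right-triangle formulas and the perpendicularity of walk and jump; the resulting expression simplifies to the same factored identity.

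For the second assertion, I would first apply the formula of Part 1 with $T$ replaced by $t_1$ to compute the distance between walk A's position at time $t_1$ (after its jump at $0$ and a walk of duration $t_1$) and walk B's position immediately after its first jump at $t_1$. The main obstacle, which I expect to be the hard part, is leveraging the hypothesis $\sinh\ell \sinh st_1 \ge 1$ to show that no sequence of continuations for either walk can reunite the endpoints: a crude triangle-inequality bound on this intermediate distance is insufficient, since further jumps can in principle accumulate arbitrarily large displacement. My approach would be to seek a separating geodesic $\mu$ in $\Htwo$ whose two sides trap the respective walks' reachable endpoints under the hypothesis. A natural candidate is walk A's first walk axis (the geodesic through $q_1$ perpendicular to the jump axis $\overline{o q_1}$), which is ultraparallel to walk B's initial walk axis $L_0$ with common perpendicular of length $\ell$; the inequality $\sinh\ell\sinh st_1 \ge 1$ should correspond to the precise Lambert/Saccheri-quadrilateral threshold at which the two trajectories become trapped in disjoint regions, precluding any common endpoint.
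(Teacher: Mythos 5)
Your Part~1 is correct and takes a genuinely different route. The paper sets up a Lambert quadrilateral $Q$ with vertices $o$, $q_1$, the foot $p_0'$ of the perpendicular from the first walk's endpoint to the second walk's walk axis, and the first walk's endpoint; it then invokes two specialized quadrilateral laws of cosines (Ratcliffe Th.~3.5.8--3.5.9 and a Fenchel formula) to solve for the auxiliary lengths $x,y$ before assembling $\cosh d$. Your two-right-triangles route is simpler: the walk-with-jumps paths $o\to q_1\to E_1$ and $o\to p_1'\to E_2$ are right triangles with legs $\ell$ and $sT$ meeting at right angles, so hyperbolic Pythagoras gives $\cosh d(o,E_i)=\cosh\ell\cosh sT$; the angle at $o$ in $\triangle oE_1E_2$ is $\tfrac{\pi}{2}-A-B$ where $\tan A=\tanh(sT)/\sinh\ell$ and $\tan B=\tanh\ell/\sinh(sT)$; and the hyperbolic law of cosines plus the addition formula for $\sin(A+B)$ does simplify to $\cosh d-1=(\cosh\ell-1)(\cosh sT-1)(\cosh\ell\cosh sT-1)$, as I checked. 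Your matrix/trace route (computing $\operatorname{tr}(M^{T}M)$ for the commutator $M=g_W^{-1}g_J^{-1}g_W g_J$) is also valid in principle, though you have not carried it out; it buys coordinate-free bookkeeping at the price of a heavier algebraic expansion.

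For Part~2 you have the right global strategy---a crude distance bound won't do, and one needs a convexity/trapping argument---but your candidate pair of geodesics is not the one that produces the stated criterion. You propose to use the ultraparallelism of walk~A's second walk axis $\mu_A$ (through $q_1$, perpendicular to $\overline{oq_1}$) with walk~B's \emph{walk} axis $L_0=\gamma_g$. But $\mu_A$ and $L_0$ are ultraparallel with common perpendicular of length $\ell$ for \emph{every} $t_1$, so this pair cannot encode the threshold $\sinh\ell\sinh(st_1)\ge 1$. The relevant pair is $\mu_A$ and walk~B's first \emph{jump} axis $\lambda_1^B$, the geodesic through $p_1'$ perpendicular to $L_0$. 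Together with the arcs $oq_1$ (length $\ell$) and $op_1'$ (length $st_1$) these bound a Lambert quadrilateral precisely when $\sinh\ell\sinh(st_1)<1$ (the formula $\sinh\ell\sinh(st_1)=\cos\gamma$); when the product is $\ge 1$ the two geodesics are disjoint. One then needs the trapping fact you flag as the hard part: walk~A's endpoint lies in the half-plane bounded by $\mu_A$ not containing $o$, walk~B's endpoint lies in the half-plane bounded by $\lambda_1^B$ not containing $o$, and disjointness of the two geodesics forces these two half-planes to be disjoint. The paper derives this trapping from the separation property of walk and jump axes (Lemma~\ref{L: separation}) packaged as Corollary~\ref{quadrants}; the proof of Proposition~\ref{distinct endpoints} itself explicitly defers to that later result. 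So your plan is reparable once you replace $L_0$ by $\lambda_1^B$ and supply the future-quadrant lemma, but as written it has both a misidentified geodesic pair and an unproved trapping step.
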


\begin{figure}[ht]
    \centering
    \def\svgwidth{\columnwidth}
    \scalebox{0.7}{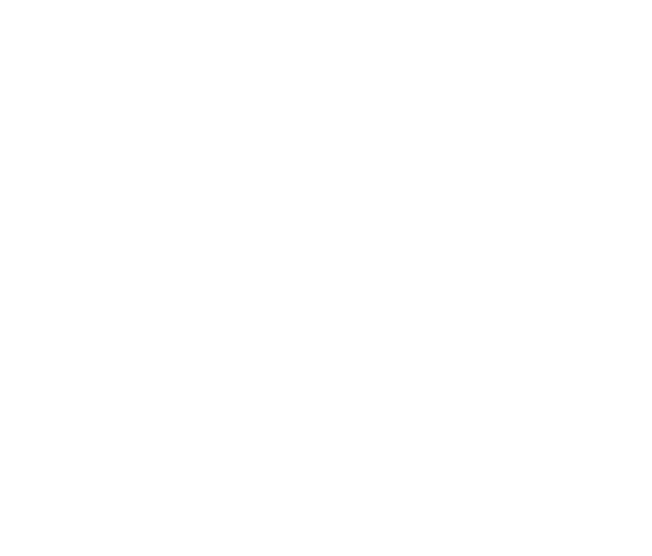}
    \caption{This figure shows two hyperbolic walks with jumps over the same time period, $T,$ with a single jump. In one walk, the jump occurs at the beginning of the time period, and in the other it occurs at the end.}
    \label{F: one jump}
\end{figure}

\begin{proof} The union of walks-with-jumps paths (in the sense of Definition \ref{D: walkswithjumpspath}) for the first two walks considered here (having one jump each) is pictured in Figure \ref{F: one jump}, using the Poincar\'e disk model with $o$ at the origin. The vertical line segment in the Figure is the first walk's jump segment, which has length $\ell$ in $\mathbb{H}^2$. The horizontal line segment is the second walk's walk segment, of length $sT$. The other two segments are each contained in circular arcs meeting the unit circle perpendicularly---the form of any hyperbolic geodesic not containing the origin in the Poincar\'e disk model.

Let $q_1$ be the far endpoint from $o$ of the first walk's jump segment, at the top left corner of the shape made by the walk-with-jumps paths of Figure \ref{F: one jump}, and let $p_1'$ be the far endpoint from $o$ of the second walk's walk segment. The shortest arc joining the first walk's endpoint to the hyperbolic geodesic containing the second walk's walk segment meets that segment at an angle of $\pi/2$ at a point $p_0'$, and therefore defines a \emph{Lambert quadrilateral} $Q$---one with three right angles---whose vertices are $o$, $q_1$, $p_0'$, and the first walk's endpoint.

The first walk's jump segment is one side of $Q$, of length $\ell$, and its walk segment is another, of length $sT$. 
Let $x$ be the length of the side of $Q$ joining $o$ to $p_0'$, $y$ the distance from the first walk's endpoint to $p_0'$, and $\gamma$ the interior angle at the first walk's endpoint. A hyperbolic law of cosines for Lambert quadrilaterals gives:
\begin{align}\label{lambert} \sinh(\ell)\sinh(x) = \cos(\gamma). \end{align}
See Theorem 3.5.9 of \cite{Ratcliffe}. A law of cosines for more general quadrilaterals having right angles at two adjacent vertices, recorded as \cite[Th.~3.5.8]{Ratcliffe}, specializes in the case of $Q$ to give $\cosh(sT) \sin(\gamma) = \cosh(x)$. Using the first equation to replace $\sin(\gamma)$ in the second, and solving for $\sinh(x)$, yields the formula below:
\[ \sinh(x) = \frac{\sinh(sT)}{\sqrt{\cosh^2(sT)\sinh^2(\ell)+1}}. \]
This implies in particular that $x<sT$. Section VI.3.3 of \cite{Fenchel} gives several more trigonometric formulas for these more general quadrilaterals. One with the form of a hyperbolic law of sines specializes to $\sinh(y) = \cosh(sT)\sinh(\ell)$ for $Q$. We apply a final formula from \cite{Fenchel} to another such quadrilateral, this one with vertices at the first and second walks' endpoints, and at $p_0'$ and $p_1'$. It gives:
\[ \cosh(d) = -\sinh (y)\sinh (\ell) + \cosh(y)\cosh(\ell)\cosh(sT-x).\]
The Proposition's first conclusion is obtained by substituting for $x$ and $y$ in this formula using those obtained above, and simplifying.

We now consider the second situation contemplated in the Proposition, in which both walks may have many jumps but the first walk still has its first jump at time $0$. As in the previous case, let $q_1$ be the far endpoint from $o$ of the first walk's jump segment, and let $p_1'$ be the far endpoint of the second walk's first walk segment. The distance from $o$ to $q_1'$ is again $\ell$, and in this case from $o$ to $p_1'$ is $st_1$.

The geodesic ray from $q_1$ that contains the first walk's next walk segment is at a right angle to its first jump segment; and likewise, the geodesic ray containing the second walk's first jump segment is at right angles to its first walk segment. If these rays intersect, then their subsegments joining $q_1$ and $p_1'$ to the point of intersection form a Lambert quadrilateral, together with the first walk's first jump segment and the second walk's first walk segment. If so then by the law of cosines for Lambert quadrilaterals, as in (\ref{lambert}) above, we would have $\sinh(\ell)\sinh(s t_1) = \cos(\gamma)$, where $\gamma$ is the angle at which the rays meet.

Thus if $\sinh(\ell)\sinh(s t_1)\geq 1$ then the two geodesic rays do not intersect. In this case we can deduce that the walks will not share endpoints, since the first walk's endpoint is on the other side from $o$ of the geodesic containing the ray from $q_1$, and the second walk's endpoint is on the other side from $o$ of the geodesic containing the ray from $p_1'$ . (We will prove this in Lemma \ref{quadrants} below.)
\end{proof}

\begin{figure}
    \centering
    \includegraphics[width = 5cm, height = 5cm]{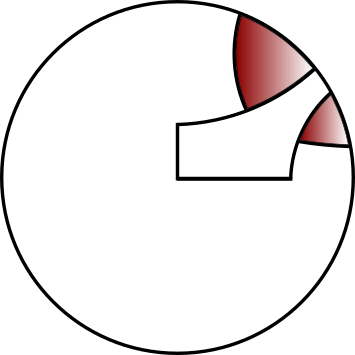}
    \caption{Disjoint future quadrants.}
    \label{future quadrants}
\end{figure}

The train of thought that finishes off the proof above motivates the notion of ``future quadrant'', an intersection of half-planes associated to an initial segment of a walk with jumps that is guaranteed to contain the walk's endpoint. This is formally defined in Definition \ref{future quad}. See Figure \ref{future quadrants} for the picture relevant to the proof of Proposition \ref{distinct endpoints}.

The behavior described in Proposition \ref{distinct endpoints} is reminiscent of paths in a tree, which, once they diverge, can never come back together. Another motivating example for the walk with jumps construction is a strategy for encoding a(n approximation to a) spike train by a walk in a tree, as we now describe.

Given a spike train $S$ over a time period $t$, we can encode a discrete approximation to $S$ in a binary sequence $w$ of length $n$ as follows: for each $1\le i \le n$, if $S$ has a spike in $[\left(\frac{i-1}n\right) t, \left(\frac{i}n\right) t]$, then $i$th entry of $w$ is $1$. If there is no spike in $[\left(\frac{i-1}n\right) t, \left(\frac{i}n\right) t]$, then the $i$th entry of $w$ is $0$. 
We can view these discrete approximations of spike trains geometrically by embedding them in a binary tree:

\begin{example}\label{E: binary tree}
Let $\mathcal{T}$ be a binary tree with root $p_0$. A binary sequence of length $n$, $w = w_1w_2\ldots w_n$ where $w_i\in \{0,1\}$, encodes a path of length $n$ from $p_0$ to a vertex of $\mathcal{T}$ as follows: at the root proceed left if $w_1=1$ and right if $w_1=0$. At the $(i-1)$th vertex of the path, proceed left if $w_i=1$ or right if $w_i=0$. 

If $\gamma,\sigma$ are paths (encoded as binary sequences) of length $m,n$ respectively, then the distance between their endpoints can be computed as follows: if $\gamma = \gamma_1\gamma_2\ldots \gamma_m$ and $\sigma = \sigma_1\sigma_2\ldots\sigma_n$, let $d$ be the length of the maximal common prefix of $\gamma$ and $\sigma$. Then the distance between the endpoints of $\sigma$ and $\gamma$ is equal to $(m-d)+(n-d)$. 
\end{example}

Walks-with-jumps paths in $\Htwo$ have geometry similar to the paths encoded by binary sequences in Example~\ref{E: binary tree} with some distinct advantages. 
First, walks-with-jumps paths can model continuous processes. In the tree $T$, the binary sequences 
\[LRLLLLLLLLLLLLLL \qquad \text{ and } \qquad LLLLLLLLLLLLLLLL\]
are quite similar, but their endpoints will be far apart. 
The walks-with-jumps model has the feature that  non-identical paths that can track each other closely if the event times are similar-- behavior that is biologically appealing, since the underlying biophysics of neuronal activity necessarily places a limit on the precision of spike times. This feature more effectively accounts for noise that appears in data.

\section{Distinct walks with identical endpoints}\label{S: same endpts}

In the final section we will produce a fairly general set of conditions in which two walks with jumps are guaranteed to have distinct endpoints, suggesting that to some extent, the endpoint ``encodes'' the times of the jumps. But first, here, we show that this encoding is not fully faithful. In Section \ref{subsec:perturb} we show this  via a general but non-constructive argument that implies coincident endpoints for walks with jumps whose jump times are not very different; following that, in Section \ref{subsec:more diff}, we create explicit  examples of walks with jumps that have distinct jump times but reach the same endpoint, and the jump times of these walks can differ substantially.

\subsection{Background and set-up}\label{S: first obs}

Standard facts about the geometry of $\mathbb{H}^2$ imply the following:
\begin{lemma}\label{wwj via isoms first half} The initial data $o$, $\bv$, $s$, $\theta$, and $\ell$ 
of a walk with jumps, as in Definition \ref{D: walkswithjumps}, determine hyperbolic isometries $f\in\PSLR$ and the one-parameter family $(g_t)_{t\in\mathbb{R}}\subset\PSLR$ with the following properties:\begin{itemize}
    \item The $g_t$ share an axis $\gamma_g$ through $o$ in the direction of $\mathbf{v}$, and for each $t$, $g_t$ translates $o$ a distance of $st$ along $\gamma_g$.
    \item The axis $\lambda_f$ of $f$ contains $o$, and $f$ translates the origin $o$ a distance of $\ell$ along $\lambda_f$ at an angle of $\theta$ counterclockwise from $\gamma_g$.
\end{itemize}
\end{lemma}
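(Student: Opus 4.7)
The plan is to build the isometries directly from the data using two standard facts about $\PSLR$: first, $\PSLR$ acts simply transitively on the unit tangent bundle of $\Htwo$; second, for any oriented geodesic $\gamma\subset\Htwo$, the orientation-preserving hyperbolic translations along $\gamma$ form a one-parameter subgroup of $\PSLR$ parametrized (by signed distance) as a group isomorphism $\mathbb{R}\to\PSLR$.

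First I would produce $\gamma_g$ as the unique unit-speed geodesic with $\gamma_g(0)=o$ and $\gamma_g'(0)=\bv$; such a geodesic exists and is unique by the standard exponential-map theory in $\Htwo$. Then for each $t\in\mathbb{R}$ define $g_t\in\PSLR$ to be the unique hyperbolic translation along $\gamma_g$ by signed distance $st$ in the direction of $\bv$; by the second fact above, $(g_t)_{t\in\mathbb{R}}$ is the image of the homomorphism $t\mapsto \operatorname{trans}_{\gamma_g}(st)$ and hence a one-parameter family sharing axis $\gamma_g$ and sending $o$ to $\gamma_g(st)$ as required.

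For $f$ I would first rotate $\bv$ counterclockwise by $\theta$ within the tangent plane $T_o\Htwo$ to obtain a unit tangent vector $\bv'$, then take $\lambda_f$ to be the unit-speed geodesic with $\lambda_f(0)=o$ and $\lambda_f'(0)=\bv'$, and finally let $f$ be the translation along $\lambda_f$ by $\ell$ in the direction of $\bv'$, provided again by the second fact above. The angle condition in the statement is then immediate from the construction of $\bv'$, and $f$ translates $o$ the prescribed distance $\ell$ along its axis $\lambda_f$.

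I expect no substantive obstacle: the only points to check are that every isometry produced is orientation-preserving (so lies in $\PSLR$ rather than the larger isometry group), which follows because translations along a geodesic preserve orientation, and that the rotation of $\bv$ used to define $\bv'$ is well-defined for $\theta\in(-\pi,\pi)$, which is standard since $T_o\Htwo$ carries a canonical orientation from $\Htwo$. All of this is packaging of classical facts from the classification of isometries of $\Htwo$ (elliptic/parabolic/hyperbolic), with hyperbolic elements characterized by translation along a unique axis.
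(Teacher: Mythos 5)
The paper states this lemma without proof, prefacing it only with ``Standard facts about the geometry of $\mathbb{H}^2$ imply the following.'' Your argument is exactly the intended unpacking of those standard facts (existence and uniqueness of geodesics from a point and tangent direction, and the one-parameter subgroup of orientation-preserving translations along a fixed geodesic), and it is correct.
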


\begin{remark}Some remarks about Lemma~\ref{wwj via isoms first half}:
\begin{itemize}
    \item The initial data of the walk with jumps is recovered from $f$ and the $g_t$ as follows: $o = \lambda_f\cap \gamma_g$, $\left.\frac{d}{dt}\right\vert_{t=0}g_t(o) = s\mathbf{v}$, $\ell = d(o,f(o))$, and $\theta$ is the counterclockwise angle from $\mathbf{v}$ to the direction vector from $o$ to $f(o)$.
    \item By replacing $f$ and $(g_t)$ with isometries of $\mathbb{H}^2$ that do not satisfy all requirements of Lemma~\ref{wwj via isoms first half}, we can produce ``walks with jumps'' satisfying different specifications --- for instance with travel along horocyclic arcs or geodesic equidistants.
\end{itemize}
\end{remark}

\begin{definition}\label{D: syllable length}
Let $w$ be a word on $\{f,g_t:\,t\in\reals^{\ge 0}\}$. The \textbf{syllable length} of $w$ is the smallest number $m\ge 1$ so that $w$ can be grouped as 
\begin{equation}\label{E: syllables}
w = g_{s_1}f^{n_2}g_{s_3} f^{n_4}\ldots f^{n_{m-1}}g_{s_m}.
\end{equation}
where $n_i\in \Z^{>0}$ and $s_i> 0$ for $1<i<m$. 
We call each of the $f^{n_i},g_{s_i}$ \textbf{syllables} of $w$.
We say $w$ is written in \textbf{syllabic form} if expressed as in \eqref{E: syllables}.
\end{definition}
We will henceforth abuse notation and use a word $w$ in syllabic form to refer to both the word itself and the isometry that it represents.
 
\begin{remark}
The syllabic form of a word should always have an odd number of syllables where the first and last syllables are $g_{s_0}$ and $g_{s_m}$ (which may be the identity). The intermediate syllables cannot be the identity. 
\end{remark}

\begin{example}
The syllable length of $w_1 = g_6 f^2 g_2g_3 f^3f$ is $5$ because we rewrite $w_1$ in syllabic form as $g_6f^2g_{5} f^4g_0$. 
The first syllable of the syllabic form is $g_6$, the second syllable is $f^2$ and the third syllable is $g_5$. 
\end{example}

\begin{definition}\label{D: walk and jump axes}
For a walk with jumps specified by $o$, $\bv$, $s$, $\theta$, $\ell$, $T$, $(t_1,\hdots,t_n)$,  $(j_1,\hdots,j_n)$, taking $f$ and $\{g_t\}_{t\in\mathbb{R}}$ as in Lemma \ref{wwj via isoms first half}, let 
\[ w =  g_{t_1}f^{j_1}g_{t_2-t_1}f^{j_2}\cdots g_{t_{n}-t_{n-1}}f^{j_n}g_{T-t_{n}}. \]
Let $w_0$ be the identity, $w_1 = g_{t_1}$ and for all $1\le i\le |w|$, let $w_i$ be the product of the first $i$ syllables of $w$. The \textit{walk axes} are the geodesics  $\gamma_i \doteq w_{2(i-1)}(\gamma_g)$  for $i\in\{1,\hdots,n+1\}$, and the \textit{jump axes} are the geodesics $\lambda_i \doteq w_{2i-1}(\lambda_f)$ for $i\in\{1,\hdots,n\}$, where $\gamma_g$, $\lambda_f$ are as in Lemma \ref{wwj via isoms first half}. 
\end{definition}

\begin{lemma}\label{wwj via isoms}
For the walk with jumps specified by initial data $o,\,\bv,\,s,\,\theta,\,\ell,$ and jump data $T$, $(t_1,\hdots,t_n)$, and $(j_1,j_2,\hdots,j_n)$, 
the endpoint-endvector pair is $(w(o),w_*(\bv))$, where $w$ from Definition \ref{D: walk and jump axes} is regarded as an isometry of $\mathbb{H}^2$, and $w_*$ is its derivative at $o$. 
Moreover, for each $i\in\{1,\hdots,n+1\}$, $\gamma_i$ from Definition \ref{D: walk and jump axes} contains the $i$th walk segment of the associated walk-with-jumps path, as described in Definition \ref{D: walkswithjumpspath}, and $\lambda_i$ contains the $i$th jump segment for $i\leq n$. 
\end{lemma}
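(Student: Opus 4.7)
My plan is to prove both claims together by induction on the walk's progress through its syllables, tracking the walker's position-and-tangent-vector pair at each breakpoint (pre- and post-jump locations and the final endpoint) while simultaneously checking axis containment. The unified induction hypothesis I would set up is: after executing the instructions corresponding to the first $k$ syllables of $w$ (with $k=2i-2$ at $q_{i-1}$ and $k = 2i-1$ at $p_i$, where $q_0 \doteq o$), the walker sits at $w_k(o)$ with current walk-direction tangent $(w_k)_*(\bv)$; moreover, if $k = 2i-2$ the $i$th walk segment, completed by the next syllable, will lie in $\gamma_i$, while if $k=2i-1$ the $i$th jump segment lies in $\lambda_i$. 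The base case $k=0$ is immediate from the initial data.

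For the inductive step at a walk syllable $g_{t_i - t_{i-1}}$, the key observation is that the conjugate $w_{2i-2}\, g_{t_i-t_{i-1}}\, w_{2i-2}^{-1}$ is a translation along $w_{2i-2}(\gamma_g) = \gamma_i$ by distance $s(t_i - t_{i-1})$ in the direction $(w_{2i-2})_*(\bv)$, which agrees with the current walk direction by hypothesis. Applying it to $w_{2i-2}(o)$ yields $w_{2i-1}(o)$, and the swept segment lies in $\gamma_i$; the tangent becomes $(w_{2i-1})_*(\bv)$. For the inductive step at a jump syllable $f^{j_i}$, the analogous conjugate $w_{2i-1}\, f^{j_i}\, w_{2i-1}^{-1}$ is a translation along $w_{2i-1}(\lambda_f) = \lambda_i$ by distance $\ell j_i$. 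To confirm that this implements the prescribed jump, I would use that $\lambda_f$ passes through $o$ at CCW angle $\theta$ from $\bv$ (Lemma \ref{wwj via isoms first half}), so by angle-preservation $\lambda_i$ passes through $w_{2i-1}(o)$ at CCW angle $\theta$ from the current walk direction $(w_{2i-1})_*(\bv)$. The segment then lies in $\lambda_i$ since $o$ and $f^{j_i}(o)$ both lie on $\lambda_f$, and the resulting tangent at the post-jump location is $(w_{2i})_*(\bv)$. The final walk segment (either from $q_n$ to the endpoint or, if $t_n = T$, the final zero-length case producing the $-\theta$ rotation of Remark \ref{R: endvector}) is handled by the last walk syllable $g_{T-t_n}$, yielding endpoint $w(o)$ and endvector $w_*(\bv)$, with the last segment along $\gamma_{n+1}$.

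The main obstacle, such as it is, is bookkeeping of angles: one must verify that after the jump the tangent vector $(w_{2i})_*(\bv)$ really equals the post-jump walk direction prescribed in Definition \ref{D: walkswithjumps}, namely at CCW angle $-\theta$ to the forward direction of the jump axis at the new location. This reduces to checking that $\bv$ itself makes CCW angle $-\theta$ with the forward direction of $\lambda_f$ at $o$ (clear from Lemma \ref{wwj via isoms first half}, since $\lambda_f$ is at CCW angle $\theta$ from $\bv$), that $f_*$ preserves the forward direction of $\lambda_f$ (since $f$ is translation along it), and then applying angle-preservation of $w_{2i-1}$. The same conjugation-plus-angle-preservation template dispatches the consistency of the burst case $j_i > 1$ (each of the $j_i$ jumps is along the same axis) and the case of a final jump at $t_n = T$.
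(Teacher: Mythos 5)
Your proof is correct and follows essentially the same inductive strategy as the paper's, which inducts on the number $n$ of jump times and, in the inductive step, applies its carefully worked-out $n=1$ argument with $w_{2n-1}$ playing the role of $g_{t_1}$. You organize the bookkeeping syllable-by-syllable and make the conjugation identity (that $w_k\,g_t\,w_k^{-1}$ and $w_k\,f\,w_k^{-1}$ are translations along the transported axes $w_k(\gamma_g)$ and $w_k(\lambda_f)$) explicit, but the underlying mechanism---angle-preservation, translation along transported axes, and the chain rule for the endvector---is identical.
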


\begin{proof}
The proof proceeds by induction on $n$. In all cases let $\bu$ be the unit vector at $o$ tangent to $\lambda_f$ pointing in the direction of $f(o)$, so that the counterclockwise angle from $\bv$ to $\bu$ is $\theta$.


Assume $n=1$, and suppose first that $t_1 = T$. 
Then the associated walks-with-jumps path consists of two segments: the first, of length $st_1$, contained in $\gamma_g$---which by definition equals $\gamma_1$---with one endpoint at $o$ and $\bv$ pointing into it; the second, of length $j_1\ell$, sharing the other endpoint $p_1$ of the first and at a counterclockwise angle of $\theta$ from its outward-pointing tangent vector. The derivative of $g_{t_1}$ at $o$ takes $\bv$ to this outward-pointing vector, since $g_{t_1}$ translates a distance of $st_1$ along $\gamma_g$, so it also takes $\bu$ to the vector pointing into the second segment at $p$. Therefore the second segment is contained in $g_{t_1}(\lambda_f) = \lambda_1$.

In this case $w = g_{t_1}f^{j_1}g_0$ in syllabic form (where $g_0$ is the identity). Note that $f^{j_1}(o)$ lies on $\lambda_f$ at a distance of $j_1\ell$ from $o$ in the direction of $\bu$. Therefore $g_{t_1}$ carries the segment of $\lambda_f$ bounded by $o$ and $f^{j_1}(o)$ to the second segment of the walk-with-jumps path, so $g_{t_1}f^{j_1}(o)$ is the path's endpoint. Similarly, the derivative of $f^{j_1}$ at $o$ carries $\bv$ to a vector at $f^{j_1}(o)$ at a clockwise angle of $\theta$ from the outward-pointing unit vector to the segment of $\lambda_f$ bounded by $o$ and $f^{j_1}(o)$; it then follows from the chain rule that $w_*(\bv)$ is the walk's endvector.

Still taking $n=1$, now suppose that $t_1 < T$. Then the associated walks-with-jumps path has three segments: the first two as in the previous case; and the third, of length $s(T-t_1)$, sharing the endpoint $q_1\ne p_1$ of the second segment and at a \textit{clockwise} angle of $\theta$ from its outward-pointing tangent vector. Thus by the previous case, the inward-pointing tangent vector to the third segment is $d(g_{t_1}f^{j_1})_o(\bv)$, and it follows that this segment lies in $\gamma_2\doteq g_{t_1}f^{j_1}(\gamma_g)$. Moreover since $g_{T-t_1}(o)$ is the point on $\gamma_g$ at distance $s(T-t_1)$ from $o$ in the direction of $\bv$, $(g_{t_1}f^{j_1})(g_{T-t_1}(o))$ is the far endpoint of the third walk segment from $q_1$.

Since $w = g_{t_1}f^{j_1}g_{T-t_1}$ in this case, the above states exactly that $w(o)$ is the endpoint of the walk with jumps. And since $d(g_{T-t_1})_o(\bv)$ is the outward-pointing tangent vector at $g_{T-t_1}(o)$ to the sub-arc of $\gamma_g$ bounded by $o$ and $g_{T-t_1}(o)$, it follows from the chain rule as in the previous subcase that $w_*(\bv)$ is the endvector of the walk with jumps. This proves the Lemma's $n=1$ case.

The $n>1$ case is analogous, after applying the inductive hypothesis to the sub-walk with jumps having the same initial data, duration $t_n$, jump times $(t_1,\hdots,t_{n-1})$, and burst vector $(j_1,\hdots,j_{n-1})$. The product $w_{2n-1}$ of the first $2n-1$ syllables of $w$ then plays the role of $g_{t_1}$ in the $n=1$ case above, as by the inductive hypothesis, $(w_{2n-1}(o),(w_{2n-1})_*(\bv))$ is the endpoint-endvector pair of the sub-walk with jumps. That is, $w_{2n-1}(o)$ is the far  endpoint from $o$ of the walk-with-jumps path associated to the sub-walk---ie.~the union of the first $2n-1$ segments of the path associated to the walk specified in the Lemma---and $(w_{2n-1})_*(\bv)$ is the outward-pointing tangent vector to the $(2n-1)^{\mathrm{st}}$ segment. The two sub-cases now follow as in the the $n=1$ case, replacing $j_1$ there with $j_n$ here and $T-t_1$ with $T-t_n$.
\end{proof}

\subsection{Perturbing jump times without changing the endpoint}\label{subsec:perturb}

We begin with a standard result that combines two assertions: that the isometry group of $\mathbb{H}^2$ acts transitively on it, and that an isometry is determined by where it sends a single point and tangent vector at that point. Below $\mathit{UT}\,\mathbb{H}^2$ is the unit tangent bundle of $\mathbb{H}^2$, consisting of pairs $(p,\mathbf{v})$ for $p\in\mathbb{H}^2$ and $\bv$ a unit-length tangent vector at $p$, and $\operatorname{SL}_2(\mathbb{R})$ is the group of $2\times 2$ real matrices with determinant 1.

\begin{proposition}\label{P: unit tangent bundle determines isometry} For a fixed $o\in\mathbb{H}^2$ and unit tangent vector $\mathbf{v}$ at $o$, there is a diffeomorphism $\operatorname{PSL}_2(\mathbb{R})\to \mathit{UT}\,\mathbb{H}^2$ that sends $w\in\operatorname{PSL}_2(\mathbb{R})$ to the pair $(w(o),w_*(\mathbf{v}))$ in $\mathit{UT}\,\mathbb{H}^2$, for $\operatorname{PSL}_2(\mathbb{R}) = \operatorname{SL}_2(\mathbb{R})/\{\pm I\}$.
\end{proposition}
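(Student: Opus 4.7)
The plan is to exhibit $\phi\colon \operatorname{PSL}_2(\mathbb{R})\to \mathit{UT}\,\mathbb{H}^2$, defined by $\phi(w) = (w(o),w_*(\mathbf{v}))$, as the orbit map at $(o,\mathbf{v})$ of a smooth, free, and transitive action of $\operatorname{PSL}_2(\mathbb{R})$ on $\mathit{UT}\,\mathbb{H}^2$. Since the standard action of $\operatorname{PSL}_2(\mathbb{R})$ on $\mathbb{H}^2$ by orientation-preserving isometries is smooth, and the assignment $w\mapsto w_*$ of derivatives is smooth, $\phi$ is smooth. The same formula makes $\operatorname{PSL}_2(\mathbb{R})$ act on $\mathit{UT}\,\mathbb{H}^2$ by $g\cdot(p,\mathbf{u}) = (g(p),g_*(\mathbf{u}))$, and $\phi$ is equivariant with respect to left multiplication on the source; thus establishing anything about $\phi$ at the single point $I\in\operatorname{PSL}_2(\mathbb{R})$ gives it everywhere.

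For surjectivity, given any $(p,\mathbf{u})\in\mathit{UT}\,\mathbb{H}^2$, transitivity of $\operatorname{PSL}_2(\mathbb{R})$ on $\mathbb{H}^2$ furnishes $w_0\in\operatorname{PSL}_2(\mathbb{R})$ with $w_0(o) = p$. The stabilizer of $o$ in $\operatorname{PSL}_2(\mathbb{R})$ consists of the hyperbolic rotations about $o$ and is isomorphic to $\operatorname{SO}(2)$, acting transitively on the unit tangent circle at $o$; hence there is a rotation $r$ about $o$ with $(w_0\circ r)_*(\mathbf{v}) = \mathbf{u}$, and $w = w_0\circ r$ satisfies $\phi(w) = (p,\mathbf{u})$. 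For injectivity, suppose $\phi(w_1) = \phi(w_2)$. Then $w_2^{-1}w_1$ fixes $o$ and its derivative at $o$ fixes $\mathbf{v}$; the only orientation-preserving isometry of $\mathbb{H}^2$ fixing $o$ with trivial derivative at $o$ is the identity, since any nontrivial rotation about $o$ acts as a nontrivial rotation on $T_o\mathbb{H}^2$.

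To upgrade the smooth bijection $\phi$ to a diffeomorphism, I would argue that both source and target are connected smooth $3$-manifolds and it suffices, by equivariance, to check that $d\phi_I\colon \mathfrak{sl}_2(\mathbb{R})\to T_{(o,\mathbf{v})}\mathit{UT}\,\mathbb{H}^2$ is an isomorphism. Choose any basis of $\mathfrak{sl}_2(\mathbb{R})$ generating the three one-parameter families of translations along two distinct geodesics through $o$ and of rotations about $o$, respectively; their images under $d\phi_I$ are the initial velocity vectors of the corresponding flows on $\mathit{UT}\,\mathbb{H}^2$ at $(o,\mathbf{v})$, and these are linearly independent by inspection because the first two move $o$ in two linearly independent directions while the third fixes $o$ and rotates $\mathbf{v}$. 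The inverse function theorem then shows $\phi$ is a local diffeomorphism at $I$, equivariance spreads this to every point, and combined with the bijectivity established above, $\phi$ is a global diffeomorphism.

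The argument is routine, and the only step that requires any actual computation is the rank check on $d\phi_I$; this is the main (and only) obstacle, and it is easily handled by naming three elements of $\mathfrak{sl}_2(\mathbb{R})$ whose generated flows on $\mathit{UT}\,\mathbb{H}^2$ are visibly linearly independent at $(o,\mathbf{v})$.
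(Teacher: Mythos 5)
Your proof is correct, and its skeleton is the same as the paper's: recognize $\phi(w)=(w(o),w_*(\mathbf{v}))$ as the orbit map of the $\operatorname{PSL}_2(\mathbb{R})$-action on $\mathit{UT}\,\mathbb{H}^2$, verify that this action is smooth and transitive, and observe that the stabilizer of $(o,\mathbf{v})$ is trivial. The difference is in the final step. The paper, after computing (in the upper half-plane model) that $\operatorname{SL}_2(\mathbb{R})$ acts transitively on $\mathit{UT}\,\mathbb{H}^2$ with stabilizer $\{\pm I\}$, simply cites the general homogeneous-space theorem (Lee, Theorem~21.18) to conclude that the orbit map descends to a diffeomorphism $\operatorname{SL}_2(\mathbb{R})/\{\pm I\}\to \mathit{UT}\,\mathbb{H}^2$. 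You instead reprove the relevant portion of that theorem directly: check that $d\phi_I$ has full rank by exhibiting three elements of $\mathfrak{sl}_2(\mathbb{R})$ --- two translations and one rotation through $o$ --- whose flows have visibly linearly independent velocities at $(o,\mathbf{v})$ (the translations span the base directions, the rotation the fiber direction), then spread local diffeomorphism by left-equivariance and combine with bijectivity. Both routes are valid; the paper's is shorter by delegating to a standard theorem, while yours is more self-contained and makes explicit why the orbit map is a local, hence global, diffeomorphism. One small point worth a sentence in a fully written version: the claim that a nontrivial rotation about $o$ has nontrivial derivative at $o$, used for injectivity, is exactly the computation (stabilizer of $(i,v)$ is $\{\pm I\}$) that the paper carries out explicitly.
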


\begin{proof} Taking $\mathbb{H}^2$ as the set of $z\in\mathbb{C}$ with positive imaginary part, $\operatorname{SL}_2(\mathbb{R})$ acts on it via M\``obius transformations: for $z\in\mathbb{H}^2$ and $a,b,c,d\in\mathbb{R}$ such that $ad-bc = 1$,
\[ \begin{pmatrix} a & b \\ c & d \end{pmatrix}.z = \frac{az+b}{cz+d}. \]
This action extends to a smooth $\operatorname{SL}_2(\mathbb{R})$-action on the tangent bundle $\mathbb{H}^2\times \mathbb{C}$ by taking $\left(\begin{smallmatrix} a & b \\ c & d \end{smallmatrix}\right)$ to act on the tangent vector $v\in\mathbb{C}$ at $z\in\mathbb{H}^2$ by multiplication by its derivative $1/(cz+d)^2$. This action preserves the hyperbolic Riemannian metric and hence restricts to an action on
\[ \mathit{UT}\mathbb{H}^2 = \{ (z,v) \in\mathbb{H}^2\times\mathbb{C}\,|\, |v| = \Im z \} \]

It is an exercise to check that the $\operatorname{SL}_2(\mathbb{R})$-action on $\mathbb{H}^2$ is transitive, and that the stabilizer of $i$ is
\[ \operatorname{SO}(2) = \left\{\begin{pmatrix} \cos\theta & -\sin\theta \\ \sin\theta & \cos\theta \end{pmatrix} \right\}. \]
Such a matrix acts on a tangent vector $v$ at $i$ by multiplication by $1/(i\sin\theta + \cos\theta)^2 = e^{-2i\theta}$, ie.~as rotation by $-2\theta$. Therefore the extended action on $\mathit{UT}\mathbb{H}^2$ is also transitive, i.e., ~$\mathit{UT}\mathbb{H}^2$ is a \textit{homogeneous space} of $\operatorname{SL}_2(\mathbb{R})$. Furthermore for any unit complex number $v$, regarded as a tangent vector to $\mathbb{H}^2$ at $i$, the stabilizer in $\operatorname{SL}_2(\mathbb{R})$ of $(i,v)$ is the set
\[ \left\{ \begin{pmatrix} \cos(n\pi) & -\sin(n\pi) \\ \sin(n\pi) & \cos(n\pi)\end{pmatrix}\right\} = \{ \pm I \}. \] 
Proposition~\ref{P: unit tangent bundle determines isometry}  now follows directly from the characterization of homogeneous spaces, see eg.~\cite[Theorem 21.18]{LeeSmoothMflds}. This asserts that for a Lie group $G$ acting on a homogeneous space $X$, and any fixed $x_0\in X$, the smooth map $G\to X$ given by $g\mapsto g.x_0$ induces a diffeomorphism $G/H\to X$, where $H$ is the stabilizer of $x_0$ in $G$.\end{proof}

\begin{definition}\label{D: walk simplices}
For $T\in\reals^+$ and $k\in\mathbb{N}$, let 
\[ T_k:=\{(s_1,\ldots,s_k):\, 0 < s_1<s_2<s_3<\ldots< s_k < T\}\subset \reals^k \]
and 
\[ \overline{T}_k:=\{(s_1,\ldots,s_k):\, 0 \leq s_1\le s_2\le s_3\le \ldots\le s_k \le T\}\subset \reals^k. \]\end{definition}

\begin{remark} $\overline{T}_k$ is a simplex in $\mathbb{R}^k$ which we view as encoding all possible walks with $k$ jumps and duration $T$. A tuple $\mathbf{s} = (s_1,\hdots,s_k)$ in $\overline{T}_k$ encodes the same jump data as the pair $(t_1,\hdots,t_n)$, $(j_1,\hdots,j_n)$ from Definition \ref{D: walkswithjumps}, where $n\leq k$ is the number of \mbox{\rm distinct} $s_i$ and for each $l\le n$, $t_l$ is a distinct such $s_i$, and $j_l$ is the number of $s_i$ equal to $t_l$.

$T_k$ is the interior of $\overline{T}_k$, an open subset of $\mathbb{R}^k$ consisting of paths that have burst vector $(1,1,\hdots,1)$.\end{remark}

\begin{lemma}\label{L: simplex to matrix} Given the initial data $o,\textbf{v},s,\theta,\ell$ of a walk with jumps as in Definition \ref{D: walkswithjumps}, and $k\in\mathbb{N}$, $T>0$: choose matrices representing the isometries $f$ and $(g_t)_{t\in\mathbb{R}}$ of Lemma \ref{wwj via isoms first half} so that $f$ has positive trace and $g_0 = I$. Continuing to refer to these matrices as $f$ and $g_t$, define $\Omega\co \mathbb{R}^k \to \operatorname{SL}_2(\mathbb{R})$ by 
\[ \Omega(\mathbf{s}) =  g_{s_1}f g_{s_2-s_1}f\cdots g_{s_{k}-s_{k-1}}fg_{T-s_{k}}, \]
for $\mathbf{s} = (s_1,\hdots,s_k)$. This map is continuously differentiable.\end{lemma}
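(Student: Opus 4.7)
The plan is to observe that $\Omega$ is a composition of smooth operations, which makes it smooth and hence $C^1$. The only step that requires any substance is pinning down the $\operatorname{SL}_2(\mathbb{R})$-lifts of the isometries supplied by Lemma \ref{wwj via isoms first half}; after that the proof is mechanical.

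First I would verify that the lifts exist and behave well. Since the covering $\operatorname{SL}_2(\mathbb{R}) \to \PSLR$ is a local diffeomorphism, the continuous one-parameter subgroup $(g_t)_{t \in \mathbb{R}}$ of $\PSLR$ lifts uniquely to a smooth one-parameter subgroup of $\operatorname{SL}_2(\mathbb{R})$ with $g_0 = I$; equivalently, $g_t = \exp(tX)$ for a unique $X \in \mathfrak{sl}_2(\mathbb{R})$, so $t \mapsto g_t$ is analytic as a matrix-valued function of $t$. The isometry $f$ is hyperbolic, translating distance $\ell > 0$ along its axis, so its two $\operatorname{SL}_2(\mathbb{R})$-lifts have traces $\pm 2\cosh(\ell/2)$, which are distinct and nonzero; the positive-trace lift is therefore a well-defined element of $\operatorname{SL}_2(\mathbb{R})$.

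Next I would note that each factor appearing in $\Omega(\mathbf{s})$ is either the fixed matrix $f$ or else has the form $g_{a(\mathbf{s})}$ with $a(\mathbf{s}) \in \{s_1,\, s_2 - s_1,\, \ldots,\, s_k - s_{k-1},\, T - s_k\}$. Each such $a$ is affine in $\mathbf{s}$, hence smooth, so each $\mathbf{s} \mapsto g_{a(\mathbf{s})}$ is smooth as the composition of a linear map $\mathbb{R}^k \to \mathbb{R}$ with the analytic map $t \mapsto g_t$. Matrix multiplication on $\operatorname{SL}_2(\mathbb{R})$ is polynomial in the entries and thus smooth, so the product of these $2k+1$ smooth matrix-valued functions is itself smooth on all of $\mathbb{R}^k$.

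To finish I would just remark that the image genuinely lands in $\operatorname{SL}_2(\mathbb{R})$ because determinant is multiplicative and each factor has determinant $1$. I do not foresee any real obstacle: the only nontrivial ingredient is the unique smooth lift of a one-parameter subgroup across the covering $\operatorname{SL}_2(\mathbb{R}) \to \PSLR$, which is standard, and everything else is a routine appeal to smoothness of matrix multiplication and composition of smooth maps.
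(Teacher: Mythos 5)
Your proof is correct and takes a genuinely different and more conceptual route than the paper's. You establish that $\Omega$ is a composition of smooth operations—the analytic one-parameter subgroup $t \mapsto g_t = \exp(tX)$, the affine maps $\mathbf{s} \mapsto s_i - s_{i-1}$, and matrix multiplication—so that $\Omega$ is in fact smooth on all of $\mathbb{R}^k$, which is stronger than $C^1$. The paper instead proves continuous differentiability by computing each partial derivative explicitly: writing $\Omega(\mathbf{s}) = L_i\, f\, R_i$ with $L_i = g_{s_1}f\cdots fg_{s_i-s_{i-1}}$ and $R_i = g_{s_{i+1}-s_i}f\cdots fg_{T-s_k}$, and using the matrix product rule on $t \mapsto L_i\, g_t f g_{-t}\, R_i$, it finds $\frac{\partial}{\partial s_i}\Omega(\mathbf{s}) = L_i(\mathfrak{g}f - f\mathfrak{g})R_i$ where $\mathfrak{g} = \left.\frac{d}{dt}\right\vert_{t=0}g_t$, then argues that $L_i, R_i$ vary continuously with $\mathbf{s}$. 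Your argument is shorter and proves more; the paper's computation is not a detour, though—the explicit formula for $\partial\Omega/\partial s_i$ is precisely what is used in Remark \ref{embark} and in the rank analysis of Proposition \ref{mo buttah}, so folding that computation into this lemma's proof is an economy rather than an inefficiency. You would still need the paper's formula somewhere if you went on to prove Proposition \ref{mo buttah}.
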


\begin{remark} If $\mathbf{s}\in \overline{T}_k$, for $\overline{T}_k$ from Definition \ref{D: walk simplices}, then a computation shows that $\Omega(\mathbf{s})\in\operatorname{SL}_2(\mathbb{R})$ represents the isometry $w$ from Definition \ref{D: walk and jump axes}.\end{remark}

\begin{proof} We will show that $\Omega$ has continuous partial derivatives, from which continuous differentiability will follow. To compute its $i$th partial derivative at $\mathbf{s} = (s_1,\hdots,s_k)\in T_k$, we write $w\doteq\Omega(\mathbf{s})$ as $L_i\,f\,R_i$, for
\begin{align}\label{lefty righty} 
L_i = g_{s_1}fg_{s_2-s_1}\cdots f g_{s_i-s_{i-1}}\quad \mbox{and}\quad R_i = g_{s_{i+1}-s_i}f\cdots fg_{T-s_k}. \end{align}
For small $t>0$, we then have 
\[ \Omega(s_1,\hdots,s_{i-1},s_i+t,s_{i+1},\hdots,s_k) = L_i\,g_tfg_{-t}\, R_i \]
We may thus compute $\frac{\partial}{\partial s_i}\Omega(\mathbf{s}) = \left.\frac{d}{dt}\right\vert_{t=0}(D_i\circ C)(t)$ with the chain rule, where $D_i\co\operatorname{SL}_2(\mathbb{R})\to\operatorname{SL}_2(\mathbb{R})$ is given by $D_i(X) = L_i\,X\,R_i$ and $C(t) = g_{t}\, f\, g_{-t}$.

We compute $C'(0)$ using the ``product rule for matrix multiplication'': for differentiable families $(A_t)$ and $(B_t)$ of $2\times 2$ matrices, \[ \left.\frac{d}{dt}\right\vert_{t=0}(A_tB_t) = A_0\left[\left.\frac{d}{dt}\right\vert_{t=0} B_t\right] + \left[\left.\frac{d}{dt}\right\vert_{t=0} A_t\right] B_0.\] 
This yields $C'(0) = \mathfrak{g}f - f\mathfrak{g}$, where $\mathfrak{g}= \left.\frac{d}{dt}\right\vert_{t=0}g_t\in\mathfrak{sl}_2(\mathbb{R})$, since $g_0 = I$. Since the action of $D_i$ on $\operatorname{SL}_2(\mathbb{R})$ restricts its action as a linear map of $M_2(\mathbb{R})$, the vector space of $2\times 2$ matrices, $D_i$ is its own derivative and we have
\[ \frac{\partial}{\partial s_i}\Omega(\mathbf{s}) = L_i(\mathfrak{g}f - f\mathfrak{g})R_i. \]
The only $\mathbf{s}$-dependence above lies in the left- and right-multipliers $L_i$ and $R_i$. These vary continuously with $\mathbf{s}$, by repeated applications of the continuity of the multiplication map on $\operatorname{SL}_2(\mathbb{R})$, and for the same reason so does $\partial \Omega/\partial s_i$.\end{proof}

\begin{remark}\label{embark} The partial derivatives computed above belong to the tangent space of $\operatorname{SL}_2(\mathbb{R})$ at $w$. Translating each back to the identity using left-multiplication by $w^{-1}$ yields $R_i^{-1}f^{-1}(\mathfrak{g}f - f\mathfrak{g})R_i = R_i^{-1}\left(f^{-1}\mathfrak{g}f - \mathfrak{g}\right) R_i$.
\[ f^{-1}(\mathfrak{g}f - f\mathfrak{g}) = 2s\sin\theta\sinh\ell
    \begin{pmatrix} -\cos\theta\sinh\ell & \cosh\ell+\sin\theta\sinh\ell \\ \cosh\ell + \sin\theta\sinh\ell & \cos\theta\sinh\ell \end{pmatrix} \]
\end{remark}

\begin{cor}\label{cont diff} For data $o,\textbf{v},s,\theta,\ell$, $k$ and $T$, the map $\psi_k\co T_k\to \mathit{UT}\mathbb{H}^2$ given by composing the diffeomorphism from Proposition \ref{P: unit tangent bundle determines isometry} with $\Omega$ from Lemma \ref{L: simplex to matrix} is continuously differentiable.\end{cor}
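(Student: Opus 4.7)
The plan is essentially a chain rule bookkeeping exercise: exhibit $\psi_k$ as a composition of maps of known regularity, and invoke the chain rule. Concretely, I would factor
\[ \psi_k \co T_k \xrightarrow{\;\Omega|_{T_k}\;} \operatorname{SL}_2(\mathbb{R}) \xrightarrow{\;\pi\;} \operatorname{PSL}_2(\mathbb{R}) \xrightarrow{\;\Phi\;} \mathit{UT}\,\mathbb{H}^2, \]
where $\pi$ is the quotient projection by $\{\pm I\}$ and $\Phi$ is the diffeomorphism supplied by Proposition \ref{P: unit tangent bundle determines isometry}.

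Next I would observe that each factor has at least $C^1$ regularity. Lemma \ref{L: simplex to matrix} gives that $\Omega\co\mathbb{R}^k\to\operatorname{SL}_2(\mathbb{R})$ is continuously differentiable, and $T_k$ is open in $\mathbb{R}^k$ (Definition \ref{D: walk simplices}), so $\Omega|_{T_k}$ is again $C^1$. The map $\pi$ is the quotient of the Lie group $\operatorname{SL}_2(\mathbb{R})$ by the discrete central subgroup $\{\pm I\}$, hence is a smooth covering map. Finally, $\Phi$ is a diffeomorphism by Proposition \ref{P: unit tangent bundle determines isometry}, hence smooth. The composition of a $C^1$ map with smooth maps is $C^1$ by the chain rule, so $\psi_k$ is continuously differentiable.

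There is no real obstacle here; the only point that deserves a sentence of attention is the passage from $\operatorname{SL}_2(\mathbb{R})$ to $\operatorname{PSL}_2(\mathbb{R})$, since Lemma \ref{L: simplex to matrix} produces a map into the former while Proposition \ref{P: unit tangent bundle determines isometry} parametrizes $\mathit{UT}\,\mathbb{H}^2$ by the latter. The remark following Lemma \ref{L: simplex to matrix} already ensures that $\pi\circ\Omega(\mathbf{s})$ is the isometry $w$ of Definition \ref{D: walk and jump axes}, so $\Phi(\pi(\Omega(\mathbf{s})))$ is indeed the endpoint-endvector pair $(w(o),w_*(\bv))$, matching the definition of $\psi_k$.
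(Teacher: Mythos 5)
Your proof is correct and is exactly the argument the paper intends; the corollary is stated without a written proof precisely because it follows immediately from Lemma \ref{L: simplex to matrix}, Proposition \ref{P: unit tangent bundle determines isometry}, and the chain rule as you lay out. You are right to flag the silent passage through the covering map $\operatorname{SL}_2(\mathbb{R})\to\operatorname{PSL}_2(\mathbb{R})$ as the one step worth a sentence, and your treatment of it (smooth covering map, hence does not affect $C^1$ regularity) is the standard and correct one.
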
 

\begin{remark}\label{psi_k interp} The map $\psi_k$ defined above takes a sequence of jump times $\mathbf{s}\in T_k$ to the endpoint-endvector pair of the walk with jumps determined by the prescribed initial data, $\mathbf{s}$, and duration $T$. In particular, the image of $\psi_k$ is the collection of endpoints of all walks with $k$ jumps, duration $T$, and initial data specified above.\end{remark}

\begin{proposition}\label{P: map to bundle not inj}
For $k>3$ and $T_k\subset\mathbb{R}^k$ as in Definition \ref{D: walk simplices}, the map $\psi_k:T_k\to \mathit{UT}\mathbb{H}^2$ defined in Corollary \ref{cont diff} is not injective on any open subset $U\subseteq T_k$. That is: for any such $U$ there exist distinct walks with jumps, sharing initial data and each with $k$ jumps and duration $T$, each of whose sequence of jump times is contained in $U$, having the same endpoint-endvector pair.
\end{proposition}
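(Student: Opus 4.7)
The approach is to exploit the dimension mismatch between $T_k \subset \mathbb{R}^k$, which has dimension $k > 3$, and the target $\mathit{UT}\,\mathbb{H}^2$, which has dimension $3$. The latter follows from Proposition \ref{P: unit tangent bundle determines isometry}, since $\operatorname{PSL}_2(\mathbb{R})$ is a $3$-dimensional Lie group. Because $\psi_k$ is $C^1$ by Corollary \ref{cont diff}, at every $\mathbf{s}\in T_k$ the derivative $d\psi_k|_{\mathbf{s}}$ has rank at most $3$, and in particular strictly less than $k$. No hyperbolic-geometric input beyond this dimension count will be needed; all the work is standard differential-topological soft analysis.

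Given any open $U \subseteq T_k$, I first let $r := \max_{\mathbf{s}\in U}\operatorname{rank}(d\psi_k|_{\mathbf{s}})$; this maximum exists because rank takes only finitely many values, and it satisfies $r \leq 3 < k$. Next I use that the rank of a $C^1$ map is lower semicontinuous---the condition ``some $r\times r$ minor of the Jacobian is nonvanishing'' is open---so the subset $V \subseteq U$ on which the rank equals $r$ is the intersection of the open set $\{\operatorname{rank}\geq r\}$ with $U$, hence nonempty and open, and the rank of $d\psi_k$ is constantly equal to $r$ throughout $V$.

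The heart of the argument is then a direct application of the constant rank theorem at any chosen $\mathbf{s}_0 \in V$: there exist open neighborhoods $W \subseteq V$ of $\mathbf{s}_0$ and $W' \subseteq \mathit{UT}\,\mathbb{H}^2$ of $\psi_k(\mathbf{s}_0)$, together with $C^1$ coordinate charts, in which $\psi_k$ becomes the linear projection $(x_1,\dots,x_k) \mapsto (x_1,\dots,x_r, 0, \dots, 0)$. Since $r < k$, the fibers of this projection are $(k-r)$-dimensional, so in any neighborhood of $\mathbf{s}_0$ one finds distinct points $\mathbf{s}_1, \mathbf{s}_2 \in W$ with $\psi_k(\mathbf{s}_1) = \psi_k(\mathbf{s}_2)$. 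By Remark \ref{psi_k interp}, these correspond to distinct walks with $k$ jumps whose jump times lie in $U$ and which share the same endpoint-endvector pair, as required.

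I do not anticipate a substantive obstacle here, since the argument is entirely formal once one has the dimensions $k > 3$ and the $C^1$ regularity of $\psi_k$ in hand. The only slightly nontrivial tool invoked is the constant rank theorem for $C^1$ maps, which is completely standard (cf.\ Lee). As an alternative that bypasses it, one could compose $\psi_k|_V$ with a chart into $\mathbb{R}^3$ and then with the inclusion into $\mathbb{R}^k$ as a coordinate subspace; were $\psi_k$ injective on $V$, this would yield a continuous injection from a nonempty open subset of $\mathbb{R}^k$ whose image sits inside a proper affine subspace, contradicting Brouwer's invariance of domain. Either route reaches the conclusion without any appeal to Proposition \ref{mo buttah} or to the hypothesis $\theta \leq \pi/2$ required there.
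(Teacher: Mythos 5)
Your main argument is correct, and it takes a genuinely different route from the paper. The paper's proof of Proposition~\ref{P: map to bundle not inj} is purely topological: it observes that if $\psi_k|_U$ were injective one could (after composing with a chart) produce a continuous injection from an open subset of $\reals^k$ into $\reals^3$, which is impossible for $k>3$ by Brouwer's Invariance of Domain. Notably, the paper's argument uses only the \emph{continuity} of $\psi_k$, not its differentiability, so it is slightly more economical in hypotheses. Your primary route instead leans on the $C^1$ regularity from Corollary~\ref{cont diff} and the constant rank theorem: you pass to a nonempty open set $V$ on which the rank of $d\psi_k$ is locally maximal (hence constant, by lower semicontinuity of rank), put $\psi_k$ in the canonical projection form, and observe that the fibers are $(k-r)$-dimensional with $k-r>0$. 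This is correct and buys a bit more than the paper: it exhibits positive-dimensional level sets of $\psi_k$ near a dense set of points, rather than merely deducing a failure of injectivity abstractly. (It is, in that sense, closer in spirit to what the paper extracts later from Proposition~\ref{mo buttah} via the implicit function theorem, but without needing the full-rank hypothesis or $\theta\le\pi/2$.) The ``alternative that bypasses it'' you sketch at the end -- including $\reals^3$ into $\reals^k$ as a coordinate subspace and invoking Invariance of Domain -- is essentially the paper's own proof, and your phrasing of it is if anything a little cleaner than the paper's, which loosely says $\psi_k|_U$ ``would be a homeomorphism'' rather than spelling out the coordinate-subspace inclusion.

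Two minor quibbles that do not affect correctness. First, you write $r := \max_{\mathbf{s}\in U}\operatorname{rank}(d\psi_k|_{\mathbf{s}})$ as if the maximum is attained on $U$; it is, but the reason is that the rank is integer-valued and bounded above by $3$, not that $U$ is compact (it need not be). Your parenthetical justification (``finitely many values'') is the right one. Second, strictly speaking you want the constant rank theorem in the $C^1$ category, which is standard but is sometimes stated only for $C^\infty$; since Lemma~\ref{L: simplex to matrix} in fact shows the partial derivatives of $\Omega$ are given by matrix products that are real-analytic in $\mathbf{s}$, this is not a serious concern.
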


\begin{proof}
Since $\mathit{UT}\mathbb{H}^2$ is a $3$-manifold we may assume that $\psi_k|_U$ maps to $\mathbb{R}^3$, by replacing $U$ with the smaller open set $U\cap\psi_k^{-1}(V)$ for a chart open set $V\subset \mathit{UT}\mathbb{H}^2$ intersecting $\psi_k(U)$, and post-composing with the chart map. If $\psi_k|_U$ were injective then by Brouwer's Invariance of Domain Theorem, $\psi|_U$ would be a homeomorphism between $U$ and $\psi(U)\subset\mathbb{R}^3$. 
However this contradicts the fact, a standard exercise in algebraic topology (cf.~\cite[Th.~2.26]{Hatcher}), that no open subset of $\reals^k$ is homeomorphic to an open subset of $\reals^3$ when $k>3$.  
\end{proof}

Proposition \ref{P: map to bundle not inj} uses only the continuity of the map $\psi_k$ to show that it is non-injective \textit{near every point of $T_k$, at arbitrarily small scales}. Interpreted in terms of walks with jumps, this implies that for any given walk with at least three jumps, one can find distinct others which each differ from the original -- hence also from each other -- by an arbitrarily small perturbation of jump times, and have the same endpoint-endvector pair as each other (but not necessarily as the original walk with jumps). The next result gives a  calculus tool for obtaining still finer information.

\newcommand\bs{\mathbf{s}}

\begin{proposition}\label{mo buttah} Fix $k\in\mathbb{N}$ at least $3$, the initial data $o$, $\textbf{v}$, $s>0$, $\theta\in(0,\pi/2]$, $\ell>0$ of a walk with jumps, and a duration $T>0$, and let $T_k\subset\mathbb{R}^k$ be as in Definition \ref{D: walk simplices}. The set $U\subset T_k$ on which the derivative of the map $\psi_k\co T_k\to\mathit{UT}\mathbb{H}^2$ from Corollary \ref{cont diff} has full rank is dense in $T_k$ and open. 
\end{proposition}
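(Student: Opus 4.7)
The plan is to treat openness and density separately: openness is immediate from continuity, and density reduces by real-analyticity to the existence of a single point of full rank. For openness, $U$ is the locus where some $3\times 3$ minor of the Jacobian of $\psi_k$ is nonzero; since all entries of the Jacobian depend continuously on $\bs$ by Lemma~\ref{L: simplex to matrix}, so do these minors, and $U$ is open. For density, each $g_t = \exp(t\mathfrak{g})$ is a matrix exponential of a linear function of $\bs$, so each Jacobian entry---and hence each $3\times 3$ minor---is a real-analytic function on the connected open set $T_k$. By unique continuation, a real-analytic function that is nonzero at any single point of $T_k$ is nonzero on a dense open subset. The density claim therefore reduces to exhibiting one $\bs_0 \in T_k$ at which the Jacobian has rank $3$.

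To exhibit such a $\bs_0$, I would focus on the partials with respect to $s_{k-2}, s_{k-1}, s_k$. By Remark~\ref{embark}, after left-translating by $w^{-1}$ these partials are represented by $\omega_i = \mathrm{Ad}(R_i^{-1})(X) \in \mathfrak{sl}_2(\mathbb{R})$, where $X = f^{-1}\mathfrak{g}f - \mathfrak{g}$ and $R_i$ is as in~\eqref{lefty righty}. A telescoping sequence of invertible $\mathrm{Ad}$-conjugations---$\mathrm{Ad}(R_{k-2})$, then $\mathrm{Ad}(g_{-\tau_{k-1}})$, then $\mathrm{Ad}(f^{-1})$, with $\tau_{k-1} := s_{k-1} - s_{k-2}$---turns linear independence of $\omega_{k-2}, \omega_{k-1}, \omega_k$ into linear independence of
\[ A(\tau) := \mathrm{Ad}(f^{-1}g_{-\tau})(X), \quad B := X, \quad C(\sigma) := \mathrm{Ad}(g_\sigma f)(X), \]
where now $\tau = \tau_{k-1}$ and $\sigma = s_k - s_{k-1}$ are the only relevant gap parameters; in particular, the values of the earlier $s_i$ and of $T - s_k$ do not affect the question.

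In the eigenbasis $(H', E', F')$ of $\mathrm{Ad}(f)$---$H'$ along the axis $\lambda_f$, $E', F'$ null---the element $X$ has no $H'$-component, because $\mathrm{Ad}(f)$ fixes $H'$ so the $H'$-coefficient of $\mathfrak{g}$ cancels out of $\mathrm{Ad}(f^{-1})\mathfrak{g} - \mathfrak{g}$. Thus $A(0), B, C(0)$ all lie in the invariant plane $\mathrm{span}(E', F')$ and are linearly dependent. However, the derivatives $A'(0) = -\mathrm{Ad}(f^{-1})[\mathfrak{g}, X]$ and $C'(0) = [\mathfrak{g}, \mathrm{Ad}(f)X]$ each carry a nonzero $H'$-component of magnitude $\tfrac12 s^2 \sin^2\theta \sinh\ell$, of opposite signs, arising from the cross-bracket $[E', F'] = H'$ together with the nonvanishing $E'$- and $F'$-components of $\mathfrak{g}$ (which follow from $\gamma_g \ne \lambda_f$, i.e.\ $\theta \ne 0$). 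A first-order Taylor expansion of $\det(A(\tau), B, C(\sigma))$ around $(0,0)$ then gives the leading term
\[ \tfrac12\, s^4 \sin^4\theta \sinh^2\ell \,(\cosh\ell - 1)(\sigma - \tau), \]
whose coefficient is strictly positive for $\theta \in (0, \pi/2]$ and $\ell > 0$. Hence for any $\bs_0 \in T_k$ with $\tau, \sigma$ small, positive, and distinct (and the earlier $s_i$ chosen to respect the ordering), the $3\times 3$ determinant is nonzero, producing a rank-$3$ point.

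The principal obstacle is executing the first-order Taylor expansion cleanly and without sign errors. The key enabling structural fact is that $X$ sits in the $\mathrm{Ad}(f)$-invariant null plane $\mathrm{span}(E', F')$ while its bracket $[\mathfrak{g}, X]$ has a nonzero axial $H'$-component, so the two independent gap perturbations push the partial-derivative vectors out of the obstructing $2$-plane in opposite directions, making full rank detectable already at first order.
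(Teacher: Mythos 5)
Your proposal is correct and takes a genuinely different route from the paper's. The paper proves density and openness together: after reducing, like you, to linear independence of three consecutive partials (and so a $3\times 3$ determinant), it computes an explicit scalar equation~(\ref{trouble}) characterizing dependence, then shows its solution set has empty interior via a monotonicity argument in $s_{i+1}$. You instead split the claim, getting openness directly from continuity of the Jacobian minors, and reducing density---via real-analyticity of $\Omega$ (using $g_t = \exp(t\mathfrak{g})$) and the identity theorem on the connected set $T_k$---to exhibiting a \emph{single} point of full rank. Your Lie-algebraic argument for that point is sound: the telescoping $\mathrm{Ad}$-conjugations correctly isolate the two gap parameters $\tau = s_{k-1}-s_{k-2}$ and $\sigma = s_k - s_{k-1}$; the observation that $X = \mathrm{Ad}(f^{-1})\mathfrak{g}-\mathfrak{g}$ has no $H'$-component because $\mathrm{Ad}(f)$ fixes $H'$ is a clean structural fact the paper does not exploit; and the bracket computation gives $H'$-components of $A'(0)$ and $C'(0)$ equal to $\mp bc(\lambda-\lambda^{-1})$, nonzero and of opposite sign whenever $bc\ne 0$, which holds for $\theta\in(0,\pi)$ (you should say $\theta\notin\{0,\pi\}$ rather than just $\theta\ne 0$, though the hypothesis $\theta\in(0,\pi/2]$ renders this moot). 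Your normalization constants in the Taylor coefficient differ from mine by a basis-dependent factor and possibly a sign, but only nonvanishing matters, and the first-order term $K_1(\tau-\sigma)$ with $K_1\ne 0$ suffices. The tradeoff: the paper's hands-on approach yields an explicit recursive description of the exceptional hypersurface (as in the example following the Proposition), while your analyticity argument makes density automatic but says nothing further about the shape of the failure set; conversely, your argument is more economical (the paper's Claims 1 and 2 are not needed for the stated result), and your leading coefficient is manifestly nonzero for all $\theta\in(0,\pi)$, not just $(0,\pi/2]$, so you would in fact prove a slightly stronger statement.
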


\begin{proof} 
Using the upper half-plane model we may assume without loss of generality that $o = i$ and $\mathbf{v} = (1,0)$. In this case the isometries $(g_t)_{t\in\mathbb{R}}$ and $f$ of Lemma \ref{wwj via isoms first half} are represented by matrices
\[ g_t = \begin{pmatrix} \cosh(st) & \sinh(st) \\ \sinh(st) & \cosh(st) \end{pmatrix}\ \mbox{and}\  
 f = \begin{pmatrix} \cosh \ell + \sin\theta \sinh\ell & \cos\theta\sinh\ell \\ \cos\theta\sinh\ell & \cosh\ell - \sin\theta\sinh\ell \end{pmatrix} \]
 Note that $g_0$ is the identity and $f$ has positive trace, as required in Lemma \ref{L: simplex to matrix}. For $\mathfrak{g} = \left.\frac{d}{dt}\right\vert_{t=0}g_t\in\mathfrak{sl}_2(\mathbb{R})$, a computation now yields that $\mathfrak{g} = s\left(\begin{smallmatrix} 0 & 1 \\ 1 & 0 \end{smallmatrix}\right)$, and hence that
 \[ \mathfrak{v}\doteq\mathfrak{g}f - f\mathfrak{g} = 2s\sin\theta\sinh\ell \begin{pmatrix} 0 & -1 \\ 1 & 0 \end{pmatrix}. \]
 This matrix features in the description of $\frac{\partial}{\partial s_i}\Omega(\mathbf{s})$ in the proof of Lemma \ref{L: simplex to matrix}.

\begin{claim} For any $\bs= (s_1,\hdots, s_k)\in \overline{T}_k$, and any $i$, $\frac{\partial}{\partial s_i}\Omega(\mathbf{s})\ne \mathbf{0}$.
\end{claim}

\begin{proof}[Proof of Claim] Appealing to Remark \ref{embark}, we find that the claim holds if and only if
$f^{-1}\mathfrak{v}\ne\mathbf{0}$. This in turn can be easily verified using the descriptions of $f$ and of $\mathfrak{v}$ recorded immediately above, together with the hypotheses on $s$, $\theta$ and $\ell$ implying that all scale factors of $\mathfrak{v}$ are positive.\end{proof}

\begin{claim} For any $\bs = (s_1,\hdots, s_k)\in \overline{T}_k$, and any $i < k$, $\frac{\partial}{\partial s_i}\Omega(\mathbf{s})$ and $\frac{\partial}{\partial s_{i+1}}\Omega(\mathbf{s})$ are linearly independent.\end{claim}

\begin{proof}[Proof of claim] These two matrices are linearly \textit{dependent} if and only if one is a scalar multiple of the other; ie, using the proof of Lemma \ref{L: simplex to matrix}, if and only if
\[ k\, L_i\,\mathfrak{v}\,R_i = L_{i+1}\,\mathfrak{v}\,R_{i+1}\quad \Leftrightarrow\quad k\,\mathfrak{v} (R_iR_{i+1}^{-1}) = (L_i^{-1}L_{i+1})\mathfrak{v}\]
for some $k\in\mathbb{R}$, where $L_i$ and $R_i$ are as described in (\ref{lefty righty}). Using this description, the equality above is equivalent to the one below, defining $\delta_i = s_{i+1} - s_i$:
\[ k\,\mathfrak{v}\, (g_{\delta_i}f) = (g_{\delta_i}f)\,\mathfrak{v}. \]
If we write $g_{\delta_i}f= \left(\begin{smallmatrix} a & b \\ c & d \end{smallmatrix}\right)$ then since $\mathfrak{v}$ is a scalar multiple of $\left(\begin{smallmatrix} 0 & -1 \\ 1 & 0\end{smallmatrix}\right)$, the above equality is equivalent to following equations simultaneously holding:
\[ -kc = b,\quad kd = a,\quad ka=d,\quad -kb = c. \]
Noting that we have $ad-bc = 1$, it follows that if the above all hold then $k = 1$, $a = d$, and $b = -c$. Performing the computations, we find that
\[ a = d \quad \Leftrightarrow \quad \sin\theta\cosh(s\delta_i)\sinh\ell = 0 \]
But this does not hold for $\theta\in(0,\pi)$ and $\ell>0$, regardless of whether $\delta_i = 0$.
\end{proof}

\begin{claim}\label{rank 3} For any fixed $i$, $1< i < k$, there is an open dense set of $\bs\in \overline{T}_k$ such that $\frac{\partial}{\partial s_{i-1}}\Omega(\mathbf{s})$, $\frac{\partial}{\partial s_i}\Omega(\mathbf{s})$, and $\frac{\partial}{\partial s_{i+1}}\Omega(\mathbf{s})$ are linearly independent.\end{claim}

\begin{proof}[Proof of Claim] For arbitrary $\bs = (s_1,\hdots, s_k)\in \overline{T}_k$, taking $L_i$ and $R_i$ (and $L_{i\pm 1}$, $R_{i\pm1}$) as in (\ref{lefty righty}), the proof of Lemma \ref{L: simplex to matrix} gives
\[ \frac{\partial}{\partial s_{i-1}}\Omega(\mathbf{s}) = L_{i-1}\mathfrak{v} R_{i-1},\quad 
  \frac{\partial}{\partial s_{i}}\Omega(\mathbf{s}) = L_{i}\mathfrak{v} R_{i},\ \ \mbox{and}\ \ 
  \frac{\partial}{\partial s_{i+1}}\Omega(\mathbf{s}) = L_{i+1}\mathfrak{v} R_{i+1}.\]
Multiplying the entire collection on the left by $L_{i}^{-1}$ and on the right by $R_{i}^{-1}$, we find that the linear independence of this collection is equivalent to that of:
\[ \mathcal{B}_0 \doteq \left\{\ (L_{i}^{-1}L_{i-1})\,\mathfrak{v}\, R_{i-1}R_{i}^{-1},\ \ \mathfrak{v},\ \ (L_{i}^{-1}L_{i+1})\,\mathfrak{v}\,(R_{i+1}R_{i}^{-1}) \ \right\}. \]
Taking $\delta_i = s_{i+1}-s_i$ and $\delta_{i-1} = s_i - s_{i-1}$, we further have:
\[ L_{i-1}^{-1}L_i = g_{\delta_{i-1}} f = R_{i-1}R_i^{-1},\ \mbox{and}\ 
    L_{i}^{-1}L_{i+1} = g_{\delta_i} f = R_{i}R_{i+1}^{-1}. \]
Writing $g_{\delta_{i-1}}f$ as $A_{i-1}$ and $g_{\delta_i}f$ as $A_i$, the collection in question has the form:
\[ \mathcal{B}_0 = \left\{\ A_{i-1}^{-1}\mathfrak{v}\,A_{i-1},\ \mathfrak{v},\ 
A_i\,\mathfrak{v}\,A_{i}^{-1} \right\} \]
Taking $A_{i-1} = \left(\begin{smallmatrix} a & b \\ c & d \end{smallmatrix}\right)$ and $A_i =\left(\begin{smallmatrix} x & y \\ z & w \end{smallmatrix}\right)$, and dividing out the common scale factor $2s\sin\theta\sinh\ell>0$ yields:
\[ \mathcal{B} = \left\{  \begin{pmatrix} -ab-cd & -b^2-d^2 \\ a^2+c^2 & ab+cd \end{pmatrix}, \begin{pmatrix} 0 & -1 \\ 1 & 0 \end{pmatrix}, \begin{pmatrix} xz+yw & -x^2-y^2 \\ z^2+w^2 & -xz-yw \end{pmatrix} \right\}. \]
If there existed $\alpha,\beta,\gamma\in\mathbb{R}$ such that the linear combination
\[ \alpha\begin{pmatrix} -ab-cd & -b^2-d^2 \\ a^2+c^2 & ab+cd \end{pmatrix} + \beta \begin{pmatrix} 0 & -1 \\ 1 & 0 \end{pmatrix} + \gamma \begin{pmatrix} xz+yw & -x^2-y^2 \\ z^2+w^2 & -xz-yw \end{pmatrix} = \begin{pmatrix} 0 & 0 \\ 0 & 0 \end{pmatrix},\]
then by considering upper-left entries we find that 
\[ \alpha(ab+cd) = \gamma(xz+yw), \]
and by considering lower-left and upper-right entries that
\[ \alpha(a^2+c^2) + \gamma(z^2+w^2) = -\beta = \alpha(b^2+d^2) + \gamma(x^2+y^2). \]
Eliminating variables we thus find that either $\alpha = \beta = \gamma = 0$ or
\begin{align}\label{trouble}
    \frac{xz+yw}{x^2+y^2-z^2-w^2} = \frac{ab+cd}{a^2-b^2+c^2-d^2} 
\end{align}
The collection $\mathcal{B}$ is thus linearly \emph{dependent} if and only if equation (\ref{trouble}) holds. Note that its left-hand side depends only on $\delta_i = s_{i+1}-s_i$, whereas the right depends only on $\delta_{i-1} = s_i-s_{i-1}$, among quantities that vary with $\bs$. In particular, $\frac{\partial}{\partial s_{i+1}} a = \frac{\partial}{\partial s_{i+1}} b = \frac{\partial}{\partial s_{i+1}} c = \frac{\partial}{\partial s_{i+1}} d = 0$. We claim that the corresponding partial derivative of the left-hand side is positive.

This follows from direct computation. Taking $S_i = \sinh(s\delta_i)$ and $C_i = \cosh(s\delta_i)$, $S_\ell = \sinh\ell$ and $C_{\ell} = \cosh\ell$, and $s_\theta = \sin\theta$ and $c_\theta = \cos\theta$, we have:
\begin{align*}
    A_i & = \begin{pmatrix} x & y \\ z & w \end{pmatrix} =  \begin{pmatrix} C_iC_\ell + s_\theta C_i S_\ell + c_\theta S_i S_\ell & S_i C_\ell - s_\theta S_i S_\ell + c_\theta C_i S_\ell \\ S_i C_\ell + s_\theta S_i S_\ell + c_\theta C_i S_\ell & C_iC_\ell - s_\theta C_i S_\ell + c_\theta S_i S_\ell \end{pmatrix}
\end{align*}
Further computation now yields\begin{align*}
 xz+yw & = 2S_iC_i(S_{\ell}^2+C_{\ell}^2) + 2c_{\theta}(S_i^2+C_i^2)S_{\ell}C_{\ell} \\
    & = \sinh(2s\delta_i)C_{2\ell} + c_{\theta}\cosh(2s\delta_i)S_{2\ell} \\
 x^2 + y^2 - z^2 - w^2 & = 4s_\theta S_\ell C_\ell = 2s_{\theta}S_{2\ell},
\end{align*}
where $C_{2\ell} = \cosh(2\ell)$ and $S_{2\ell} = \sinh(2\ell)$. The hypotheses $\theta\in(0,\pi/2]$ ensures that $c_{\theta}\ge0$,  and $S_i>0$ since $\bs\in T_k$. Thus using that $\delta_i = s_{i+1}-s_i$ and the computation above, we find that $\frac{\partial}{\partial s_{i+1}}(xz+yw) > 0$. Again by the computation above, the denominator $x^2+y^2-z^2-w^2$ of the left side of (\ref{trouble}) does not depend on $\delta_i$; hence the left-hand side of (\ref{trouble}) increases with $s_{i+1}$ as claimed.

But this further implies Claim \ref{rank 3}, since at any $\bs = (s_1,\hdots,s_k)\in T_k$ where equation (\ref{trouble}) holds, increasing $s_{i+1}$ while holding $s_i$ and $s_{i-1}$ constant produces points of $T_k$ arbitrarily close to $\bs$ where (\ref{trouble}) does not hold. Therefore the set of $\bs$ where $\frac{\partial}{\partial s_{i-1}}\Omega(\mathbf{s})$, $\frac{\partial}{\partial s_i}\Omega(\mathbf{s})$, and $\frac{\partial}{\partial s_{i+1}}\Omega(\mathbf{s})$ are linearly independent is dense. And it is also open, being the complement of the solution set to (\ref{trouble}).
\end{proof}

 Given that $U$ contains the open dense set described in Claim \ref{rank 3} for any fixed $i$, the claim immediately implies the result.
 \end{proof}

The next example shows that the set $U$ of Proposition \ref{mo buttah} where the derivative of $\psi_k$ has full rank may be a proper subset of $T_k$.

\begin{example}
    Let us now consider a walk with jumps satisfying the hypotheses of Proposition \ref{mo buttah}, specialize to the case that $\theta = \pi/2$, and consider the equation (\ref{trouble}) in this case. The computations below that equation specialize to:\begin{align*}
        xz + yw & = \sinh(2s\delta_i)\cosh(2\ell) \\
        x^2+y^2-z^2-w^2 & = 2\sinh(2\ell) \\
        ab+cd & = \sinh(2s\delta_{i-1}) \\
        a^2-b^2+c^2-d^2 & = 2\cosh(2s\delta_{i-1})\sinh(2\ell)
    \end{align*}
Setting the appropriate products equal to each other as in (\ref{trouble}), and rearranging terms, we thus obtain the following equation:
\[ \sinh(2s\delta_i) = \frac{\tanh(s\delta_{i-1})}{\cosh(2\ell)}. \]
This equation is satisfied if and only if $\frac{\partial}{\partial s_{i+1}}\Omega(\mathbf{s})$ lies in the span of 
$\frac{\partial}{\partial s_{i-1}}\Omega(\mathbf{s})$ and $\frac{\partial}{\partial s_i}\Omega(\mathbf{s})$. 

We may regard it as recursively prescribing the value of $\delta_i$, given $\delta_{i-1}$. A given value of $\delta_1$ therefore uniquely determines the set of values $\delta_i$, $i>1$, such that the span of all $\frac{\partial}{\partial s_i}\Omega(\mathbf{s})$ is two-dimensional.
\end{example}

\begin{remark} We could ask whether two walks with \mbox{\rm different} durations can ever have the same endpoint. This may be less biologically relevant.\end{remark}

We close this subsection by recording some consequences of the technical Proposition \ref{mo buttah} for endpoints and endvectors of walks with jumps.

\begin{cor}\label{C: different walks same endpoints} Fix $k\in\mathbb{N}$ greater than $3$, the initial data $o$, $\textbf{v}$, $s>0$, $\theta\in(0,\pi/2]$, $\ell>0$ of a walk with jumps, and a duration $T>0$, and let $T_k\subset\mathbb{R}^k$ parametrize the set of walks with $k$ jumps and the given initial data as in Definition \ref{D: walk simplices}. For any walk with jumps having the given initial data and duration, and whose sequence of jump times lies in the dense open set $U\subset T_k$ of Proposition \ref{mo buttah}, there exist arbitrarily small perturbations of its sequence of jump times yielding walks with jumps sharing its endpoint and endvector.
\end{cor}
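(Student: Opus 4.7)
The plan is to apply the submersion theorem to the $C^1$ map $\psi_k \colon T_k \to \mathit{UT}\mathbb{H}^2$ at points of $U$. Since $\mathit{UT}\mathbb{H}^2$ is a $3$-manifold and $T_k \subset \mathbb{R}^k$ is $k$-dimensional, the assertion in Proposition \ref{mo buttah} that $D\psi_k$ has full rank on $U$ must mean rank equal to $3 = \min\{k,3\}$. (For $k > 3$, full rank on $T_k$ in the sense of a $k$-to-$3$ map cannot mean rank $k$; I would open the proof by noting this and interpreting Proposition \ref{mo buttah} accordingly via its statement for the $k\ge 3$ case.) Thus $\psi_k$ is a submersion at every $\mathbf{s} \in U$.

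First I would invoke the (constant rank / submersion) theorem, for instance in the form of \cite[Theorem 4.12]{LeeSmoothMflds}: since $\psi_k$ is $C^1$ (Corollary \ref{cont diff}) and is a submersion at $\mathbf{s}\in U$, there exists an open neighborhood $W \subset T_k$ of $\mathbf{s}$ on which $\psi_k$ has constant rank $3$, and the fiber $\psi_k^{-1}(\psi_k(\mathbf{s})) \cap W$ is an embedded $(k-3)$-dimensional submanifold of $W$ containing $\mathbf{s}$. Because the hypothesis $k > 3$ gives $k - 3 \ge 1$, this fiber is a positive-dimensional submanifold through $\mathbf{s}$.

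Next, I would conclude: any neighborhood of $\mathbf{s}$ in $T_k$, no matter how small, contains points $\mathbf{s}' \ne \mathbf{s}$ lying in this fiber, since a positive-dimensional submanifold cannot be isolated at any of its points. By the interpretation of $\psi_k$ in Remark \ref{psi_k interp}, each such $\mathbf{s}'$ is the sequence of jump times of a walk with jumps (sharing the specified initial data and duration $T$) having the same endpoint-endvector pair as the walk determined by $\mathbf{s}$. Since we can take $\mathbf{s}'$ arbitrarily close to $\mathbf{s}$ in $\mathbb{R}^k$, these walks are arbitrarily small perturbations of the given one.

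The only subtle point, and the thing I would be careful to address explicitly, is the interpretation of ``full rank'' in Proposition \ref{mo buttah} when $k > 3$; once this is fixed as ``surjective derivative,'' the argument is a direct application of the submersion theorem and no further hyperbolic-geometric input is needed. No quantitative control on the size of the perturbation is claimed, which is consistent with the statement of the corollary.
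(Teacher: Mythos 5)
Your proposal is correct and takes essentially the same route as the paper: the paper also clarifies that ``full rank'' means rank $3$ (since $\mathit{UT}\,\mathbb{H}^2$ is three-dimensional), applies the Implicit Function Theorem (equivalently, your submersion/constant-rank theorem) to conclude that the level set through $\mathbf{s}$ is a $(k-3)$-dimensional submanifold of $U$, and finishes by observing that this is positive-dimensional since $k>3$ and invoking Remark \ref{psi_k interp}.
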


\begin{proof} Recall that $U$ is defined to be the set of $\bs\in T_k$ at which the derivative of the map $\psi_k\co T_k\to\mathit{UT}\mathbb{H}^2$ from Corollary \ref{cont diff} has full rank, meaning rank three 
since $\mathit{UT}(\mathbb{H}^2)$ is three-dimensional. Since $\psi_k$ is continuously differentiable, it is a standard consequence of the Implicit Function Theorem that for any $\bs \in U$, the level set of $\psi_k$ containing $\bs$ intersects $U$ in a submanifold of dimension $k-3$. Recalling the interpretation of $\psi_k$ from Remark \ref{psi_k interp}, we see that the small perturbations of the present Corollary's statement lie on this submanifold, which has positive dimension since by hypothesis $k>3$.
\end{proof}

\begin{cor}\label{rich behavior} For any fixed $k\ge3 \in\mathbb{N}$, initial data $o$, $\textbf{v}$, $s>0$, $\theta\in(0,\pi/2]$, $\ell>0$ of a walk with jumps, and any duration $T>0$, the set of endpoints of walks with jumps in $\mathbb{H}^2$ having the given initial data, $k$ jumps, and duration $T$ is the closure of its interior. In particular, it has non-empty interior.
\end{cor}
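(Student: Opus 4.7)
The plan is to exhibit the endpoint set $E$ as the image of a compact set under a continuous map to $\mathbb{H}^2$, and to use Proposition \ref{mo buttah} to show it contains a dense family of interior points. Let $\pi\co\mathit{UT}\mathbb{H}^2\to\mathbb{H}^2$ denote the bundle projection, a smooth submersion. First I would extend the map $\psi_k\co T_k\to\mathit{UT}\mathbb{H}^2$ from Corollary \ref{cont diff} to $\overline{T}_k$ by the same formula, interpreting coincident jump times as bursts (so $g_{s_i-s_{i-1}}=g_0=I$ when $s_i=s_{i-1}$). By the remark after Definition \ref{D: walk simplices} together with Remark \ref{psi_k interp}, the set $E$ of endpoints of walks with the prescribed initial data, $k$ jumps, and duration $T$ is exactly $(\pi\circ\psi_k)(\overline{T}_k)$; since $\overline{T}_k$ is compact, $E$ is closed in $\mathbb{H}^2$.

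Next I would invoke Proposition \ref{mo buttah} to obtain a dense open set $U\subseteq T_k$ on which $d\psi_k$ has rank $3$. Because $d\pi$ is everywhere surjective onto $T\mathbb{H}^2$, the composed differential $d(\pi\circ\psi_k)$ will have rank $2=\dim\mathbb{H}^2$ throughout $U$, so $\pi\circ\psi_k|_U$ is a submersion. The standard open mapping property of submersions then gives that $\pi\circ\psi_k$ sends each open subset of $U$ to an open subset of $\mathbb{H}^2$; since this image is contained in $E$, we obtain $(\pi\circ\psi_k)(U)\subseteq\mathrm{int}(E)$. In particular $\mathrm{int}(E)$ is non-empty, yielding the ``in particular'' clause of the Corollary.

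Finally, since $U$ is dense in $T_k$ and $T_k$ is dense in $\overline{T}_k$, $U$ is dense in $\overline{T}_k$. Given any $p\in E$, I would write $p=(\pi\circ\psi_k)(\bs)$ for some $\bs\in\overline{T}_k$, approximate $\bs$ by a sequence in $U$, and use continuity to produce interior points of $E$ converging to $p$. This yields $E\subseteq\overline{\mathrm{int}(E)}$; combined with the closedness of $E$ this gives the desired equality $E=\overline{\mathrm{int}(E)}$.

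The heavy lifting was already carried out in Proposition \ref{mo buttah}, so no step here presents a serious obstacle. The most delicate point is verifying that $\psi_k$ extends continuously and unambiguously to $\overline{T}_k$ in a way that genuinely encodes walks with bursts, but this is effectively built into Definition \ref{D: walk and jump axes} and the formula of Lemma \ref{L: simplex to matrix}, since the one-parameter family $(g_t)$ is defined on all of $\mathbb{R}$ and satisfies $g_0=I$.
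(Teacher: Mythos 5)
Your proof takes essentially the same route as the paper's: compose $\psi_k$ with the bundle projection $\pi\co\mathit{UT}\mathbb{H}^2\to\mathbb{H}^2$, invoke Proposition~\ref{mo buttah} for a dense open $U\subseteq T_k$ on which the composite is a submersion, use the open mapping property of submersions to see that $(\pi\circ\psi_k)(U)\subseteq\mathrm{int}(E)$, and then use density of $U$ to conclude $E\subseteq\overline{\mathrm{int}(E)}$.

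The one place your argument is more careful than the paper's is on the reverse inclusion $\overline{\mathrm{int}(E)}\subseteq E$, which amounts to $E$ being closed in $\mathbb{H}^2$. The paper's proof identifies the endpoint set with the image of the \emph{open} simplex $T_k$ and establishes only $E\subseteq\overline{\mathrm{int}(E)}$; the closedness of $E$ is left tacit. You address this directly by extending $\psi_k$ continuously to the compact simplex $\overline{T}_k$ (using $g_0 = I$ and the remark after Definition~\ref{D: walk simplices}, which identifies $\overline{T}_k$ as the parameter space of all walks with $k$ jumps, bursts and boundary-time jumps included), so that $E = (\pi\circ\psi_k)(\overline{T}_k)$ is the continuous image of a compact set and therefore closed. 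That extra step is correct, it matches the Corollary's actual statement (which concerns all walks with $k$ jumps, not just burst-free ones with interior jump times), and it cleanly closes the argument. So your approach is a slightly more rigorous rendering of the same proof.
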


\begin{proof} The set of endpoints of walks with jumps is the image of the simplex $T_k$ of Definition \ref{D: walk simplices} under the composition of the bundle projection  $\mathit{UT}\mathbb{H}^2\to\mathbb{H}^2$ with the map $\psi_k\co T_k\to\mathit{UT}\mathbb{H}^2$ from Corollary \ref{cont diff}. The bundle projection is a submersion, so by Proposition \ref{mo buttah} this composition has full rank derivative at every point of the open dense subset $U\subset T_k$ identified there. As in the previous proof, the implicit function theorem now implies that for every $\bs\in U$, there is an open neighborhood of $\psi_k(\bs)$ contained in $\psi_k(U)\subset\psi_k(T_k)$. Thus $\psi_k(U)$ is contained in the interior of the set of walk endpoints. Moreover, since $U$ is dense in $T_k$, its image is dense in the set of endpoints.
\end{proof}

\begin{remark}\label{diameter bound} The \mbox{\rm diameter} of the set of endpoints of walks with jumps having jump angle $\pi/2$ and other data identical to that of Corollary \ref{rich behavior}, meaning the supremum of the pairwise distances between these endpoints, can be explicitly bounded below along the lines of Proposition \ref{distinct endpoints}. A computation parallel to the proof of that result shows that the distance $d$ between the endpoints of the walks with jumps corresponding to the vertices $(0,\hdots,0)$ and $(T,\hdots,T)$ of $T_k$ satisfies:
\[ \cosh(d)-1 = (\cosh(k\ell)-1)(\cosh(sT)-1)(\cosh(k\ell)\cosh(sT)-1).\]
The diameter of the set of endpoints is thus bounded below by $d$, which can be seen to increase linearly with any of the individual quantities featured in its definition.

This exhibits a stark qualitative difference between the Euclidean and hyperbolic contexts: if one replaced ``$\mathbb{H}^2$'' by ``$\mathbb{R}^2$'' in its statement then the resulting set of endpoints would have diameter $0$, being a single point.
\end{remark}

\begin{proposition}\label{P: different walks same endpoints}
Given the data $o$, $\textbf{v}$, $s$, $\theta$, $\ell$, $T$, there exist two distinct sequences of jump times $(t_1,\ldots,t_n)$ and $(t_1',\ldots,t_n')$ that the corresponding walks with jumps have the same endpoints. In particular, the associated isometries $w = g_{t_1}fg_{t_2-t_1}f\cdots g_{t_{n}-t_{n-1}}fg_{T-t_{n}}$ and $w' = g_{t_1'}fg_{t_2'-t_1'}f\cdots g_{t_{n}'-t_{n-1}'}fg_{T-t_{n}'}$ are equal.
\end{proposition}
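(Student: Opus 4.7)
The plan is to derive this immediately from Proposition \ref{P: map to bundle not inj}, which already supplies the needed non-injectivity of $\psi_k$ on open subsets of $T_k$ for $k > 3$. Note that Proposition \ref{P: map to bundle not inj} uses only continuity of $\psi_k$ (together with Brouwer's invariance of domain), and in particular places no restriction on $\theta$ beyond the one in Definition \ref{D: walkswithjumps}. This is convenient, because the present proposition makes no restriction on $\theta$ either, so the more refined differential statement of Proposition \ref{mo buttah} (restricted to $\theta \in (0, \pi/2]$) is not available in full generality.

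The first step is to fix any integer $n \geq 4$, so that $T_n$ is itself a nonempty open subset of $\mathbb{R}^n$. Applying Proposition \ref{P: map to bundle not inj} with $U = T_n$ yields two distinct points $\mathbf{s} = (t_1,\ldots,t_n)$ and $\mathbf{s}' = (t_1',\ldots,t_n')$ in $T_n$ with $\psi_n(\mathbf{s}) = \psi_n(\mathbf{s}')$. By the interpretation of $\psi_n$ recorded in Remark \ref{psi_k interp}, the two walks with jumps associated to $\mathbf{s}$ and $\mathbf{s}'$ (with the given initial data and duration $T$) share the same endpoint-endvector pair, and so in particular the same endpoint.

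For the ``in particular'' clause, I would invoke Lemma \ref{wwj via isoms}: the isometries $w$ and $w'$ appearing in the statement are precisely those that carry $(o,\mathbf{v})$ to the endpoint-endvector pair of the two respective walks. Since these endpoint-endvector pairs coincide, Proposition \ref{P: unit tangent bundle determines isometry} (which identifies $\operatorname{PSL}_2(\mathbb{R})$ with $\mathit{UT}\,\mathbb{H}^2$ via the orbit of $(o,\mathbf{v})$) forces $w = w'$ as elements of $\operatorname{PSL}_2(\mathbb{R})$, as claimed.

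There is essentially no significant obstacle to overcome here: all the nontrivial work has already been carried out in the topological argument behind Proposition \ref{P: map to bundle not inj} and in the algebraic identification of walks with words in $\operatorname{PSL}_2(\mathbb{R})$ given by Lemma \ref{wwj via isoms}. The only small bookkeeping point is to notice that $T_n$ may be taken as the ``open subset'' in Proposition \ref{P: map to bundle not inj}, which ensures that the two jump-time sequences genuinely have the same length $n$ and lie in the interior of the simplex (so neither walk secretly has fewer than $n$ jumps or a nontrivial burst vector).
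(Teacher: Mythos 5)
Your proposal is correct and follows exactly the same route as the paper: invoke Proposition \ref{P: map to bundle not inj} to get two distinct jump-time sequences with the same endpoint-endvector pair, then invoke Proposition \ref{P: unit tangent bundle determines isometry} (via Lemma \ref{wwj via isoms}) to conclude $w = w'$. Your write-up is somewhat more careful than the paper's terse version, in particular by explicitly taking $U = T_n$ for some $n \geq 4$ to guarantee the two sequences have the same length and no bursts, but it is the same argument.
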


\begin{proof}
By Proposition~\ref{P: map to bundle not inj}, the map $\psi$ is not injective. Then $(w(o),w_*(\textbf{v})) = (w'(o),w_*'(\textbf{v}))$. By Proposition~\ref{P: unit tangent bundle determines isometry}, $w = w'$. 
\end{proof}

\begin{definition}
A \textbf{semigroup} is a set together with an associative operation. 
A \textbf{free semigroup} is one that is isomorphic to the semigroup generated by a set with no relations. 
\end{definition}

\begin{cor}\label{C: no free semigroup}
Given the data, $o$, $\textbf{v}$, $s$, $\theta$, $\ell$, $T$ the isometry $f$ and the one parameter family $g_t$ do not generate a free semigroup. 
\end{cor}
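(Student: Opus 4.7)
The strategy is a direct application of Proposition \ref{P: different walks same endpoints}, which supplies an explicit nontrivial relation in the semigroup. That proposition gives distinct sequences of jump times $(t_1,\hdots,t_n)$ and $(t_1',\hdots,t_n')$ whose associated isometries $w$ and $w'$ are equal in $\PSLR$. As formal words in $f$ and the $g_t$'s written in syllabic form (Definition \ref{D: syllable length}), $w$ and $w'$ differ—since some $t_i - t_{i-1} \ne t_i' - t_{i-1}'$, the corresponding $g$-syllables are distinct generators—while representing the same element of the semigroup. This is a relation not implied by the built-in one-parameter family structure $g_s g_t = g_{s+t}$.

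First, I would make explicit what ``free semigroup generated by $f$ and $(g_t)$'' means. In a free semigroup on a set $X$, every element admits a unique expression as a finite product of elements of $X$. Viewing the generating set as $\{f\}\cup\{g_t : t\ge 0\}$, the syllabic form of Definition \ref{D: syllable length} is already the canonical normal form modulo the relations $g_sg_t=g_{s+t}$. Hence the equality $w=w'$ with distinct syllabic presentations directly witnesses failure of unique factorization with respect to this generating set.

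To handle the stricter interpretation—that the semigroup $S\doteq\langle f,\{g_t\}_{t\ge 0}\rangle\subset \PSLR$ is not isomorphic to a free semigroup on \emph{any} set—I would add a brief appeal to the fact that a free semigroup cannot contain a commutative non-cyclic sub-semigroup (a classical consequence of the Lyndon--Sch\"utzenberger theorem on commuting elements of free semigroups). The sub-semigroup $\{g_t:t>0\}$ is isomorphic to $(\reals^{>0},+)$ via $g_t\mapsto t$, which is uncountable, commutative, and not cyclic, giving the required contradiction.

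I do not expect a genuine technical obstacle; the combinatorial content is already packaged by Proposition \ref{P: different walks same endpoints}, and the only care needed is interpretive, in specifying the sense of ``free.'' A one-line version of the argument—``by Proposition \ref{P: different walks same endpoints}, distinct syllabic words can represent equal isometries, so $f$ and the $g_t$ do not freely generate a semigroup''—is likely the intended proof, with the sub-semigroup remark included as a footnote for the strict abstract reading.
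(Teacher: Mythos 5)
Your proposal is correct and takes the same route the paper clearly intends: the corollary is stated with no explicit proof immediately after Proposition~\ref{P: different walks same endpoints}, and deducing non-freeness from the distinct syllabic words $w\ne w'$ representing equal isometries is exactly the point. Your extra remarks on the interpretive ambiguity (the built-in relations $g_sg_t=g_{s+t}$, and the Lyndon--Sch\"utzenberger observation for the strict ``free on no generating set'' reading) are sound but go beyond what the paper records.
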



\section{More different walks with identical endpoints}\label{subsec:more diff}

The previous section's results, notably Corollary~\ref{C: different walks same endpoints}, show that there are distinct walks with the same duration and number of jumps, having identical  endpoints. It is the nature of that approach to produce walks whose jump times are small perturbations of each other. In this section we will show by example that much more significant variation in jump times can still yield walks with jumps that have identical endpoints. We begin with Proposition \ref{approx one jump}, showing that a walk with a single, immediate jump can be well approximated for an arbitrary time by one in which a burst of two jumps occurs after a lag, followed by a regular sequence of jumps. We will then use a couple of tricks to turn a pair of such walks into a pair with identical durations and endpoints, in Proposition \ref{same duration jumps endvector}.

The family of examples produced here do not represent the full spectrum of possible ways in which walks with ``significantly different'' jump patterns may have identical endpoints; indeed, we do not feel that we have a firm handle on this issue at the moment. The construction we present makes non-generic choices at certain points for the sake of convenience. For instance, ``two jumps'' could easily be replaced by ``$n$ jumps'' in the paragraph above, for arbitrary $n\ge 2$. And we fix the jump angle at $\pi/2$ immediately below in order to take advantage of certain helpful hyperbolic trigonometric formulas for right-angled hyperbolic polygons.

In the lemma below, a \textit{pentagon} is a concatenation of distinct geodesic arcs $\alpha_1,\hdots,\alpha_5$---its \textit{sides}---such that $\alpha_i$ and $\alpha_{i+1}$ share an endpoint for each $i<5$, as do $\alpha_1$ and $\alpha_5$. Its \textit{vertices} are $v_i \doteq \alpha_i\cap\alpha_{i+1}$, for $i<5$, and $v_5 \doteq \alpha_5\cap\alpha_1$. Its \textit{angle} at $v_i$  ($i=1$ to $i=4$) is measured counterclockwise from $\alpha_{i+1}$ to $\alpha_i$, or for $i=5$, from $\alpha_1$ to $\alpha_5$. A pentagon is \textit{self-intersecting} if non-adjacent sides intersect.

\begin{lemma}\label{pentagon} For any $\ell>0$ there exist $0 < X_0 < X_1 \doteq \cosh^{-1}(\coth \ell\tanh(2\ell))$, 
such that for any $x\in (X_0,X_1)$ there is a self-intersecting pentagon $\alpha_1,\hdots,\alpha_5$ satisfying the following conditions:\begin{itemize}
    \item $\alpha_1$ has length $\ell$, $\alpha_2$ length $x$, and $\alpha_3$ length $2\ell$;
    \item the angles at $v_5$, $v_1$, $v_2$ and $v_3$ all equal $\pi/2$; and
    \item $\alpha_3$ intersects $\alpha_5$.
\end{itemize}
The angle $\delta$ at $v_4$ satisfies $\displaystyle{ \lim_{x\to X_0^+} \delta = 0}$, and $\displaystyle{\lim_{x\to X_1^-} \delta = \frac{\pi}{2} - \sin^{-1}\left(\frac{\cosh\ell}{\cosh(2\ell)}\right)}$.
\end{lemma}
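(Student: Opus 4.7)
The plan is to prove Lemma~\ref{pentagon} by direct hyperbolic-trigonometric computation, deriving a closed-form expression for $\cos\delta$ as a function of $x$ (with $\ell$ fixed) and reading off $X_0$, $X_1$, and the limits from it. The core idea is to pin down the angle of crossing of $\lambda_3$ and $\lambda_5$ (the full geodesics containing $\alpha_3$ and $\alpha_5$) via an auxiliary Lambert quadrilateral, then interpret the pentagon's non-right angle $\delta$ via a hyperbolic right triangle sitting inside the pentagon.

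First I would construct $v_5, v_1, v_2, v_3$ so that $|v_5v_1|=\ell$, $|v_1v_2|=x$, $|v_2v_3|=2\ell$, with right angles at $v_1$ and $v_2$. Let $\lambda_1,\lambda_3$ be the full geodesics of $\alpha_1,\alpha_3$, let $\lambda_4$ be the perpendicular to $\lambda_3$ at $v_3$, and $\lambda_5$ the perpendicular to $\lambda_1$ at $v_5$; the pentagon closes with $v_4 \doteq \lambda_4 \cap \lambda_5$ precisely when these intersect. Assume $p \doteq \lambda_3 \cap \lambda_5$ is nonempty; then $p, v_5, v_1, v_2$ form a Lambert quadrilateral with right angles at $v_5, v_1, v_2$ and acute angle $\phi$ at $p$. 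The identity used in the proof of Proposition~\ref{distinct endpoints} gives $\cos\phi = \sinh\ell\sinh x$, and the companion identity $\cosh(d(v_2,p))\sin\phi = \cosh\ell$ combined with $d(v_2, p) = 2\ell$ isolates $X_1 = \cosh^{-1}(\coth\ell\tanh 2\ell)$ as the value of $x$ at which $p = v_3$; for $x<X_1$, the point $p$ lies strictly in the interior of $\alpha_3$.

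When $v_4$ exists, the points $v_3, v_4, p$ form a hyperbolic right triangle with right angle at $v_3$, angle $\phi$ at $p$, and angle $\delta$ at $v_4$. The right-triangle identity $\cos\delta = \cosh(|v_3 p|)\sin\phi$, with $|v_3 p| = 2\ell - d(v_2, p)$ and the expressions above for $\cosh(d(v_2,p))$ and $\sinh(d(v_2,p)) = \sinh\ell\cosh x/\sin\phi$, simplifies via the addition formula for $\cosh$ to
\[ \cos\delta = \cosh\ell\cosh(2\ell) - \cosh\ell(\cosh(2\ell) - 1)\cosh x. \]
This closed form is strictly decreasing in $x$. Setting $\cos\delta = 1$ uniquely determines $X_0$ via $\cosh X_0 = (\cosh\ell\cosh(2\ell) - 1)/(\cosh\ell(\cosh(2\ell) - 1))$; a direct computation shows this exceeds $1$, so $X_0 > 0$. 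Evaluating at $x = X_1$, using $\cosh X_1 = \coth\ell\tanh 2\ell$ and $\cosh^2 2\ell - \sinh^2 2\ell = 1$, gives $\cos\delta = \cosh\ell/\cosh 2\ell$, i.e. $\delta = \pi/2 - \arcsin(\cosh\ell/\cosh 2\ell)$. Monotonicity of $\cos\delta$ together with $\cosh\ell/\cosh 2\ell < 1$ gives $X_0 < X_1$ and establishes both stated limits.

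The main obstacle will be verifying the self-intersection condition $\alpha_3 \cap \alpha_5 \neq \emptyset$ for $x \in (X_0, X_1)$. That $p$ lies in the interior of $\alpha_3$ is immediate from $d(v_2,p) < 2\ell$, but one must also show $p \in \alpha_5$, i.e. that $p$ lies between $v_5$ and $v_4$ on $\lambda_5$. I would argue this by a continuity and boundary-behavior analysis: as $x \to X_1^-$ one has $v_4 \to v_3 = p$, while as $x \to X_0^+$ the point $v_4$ escapes to an ideal point of $\lambda_5$ while $p$ remains at finite distance from $v_5$; since $v_5, p, v_4$ all lie on $\lambda_5$ and no two of them coincide for $x$ strictly inside $(X_0, X_1)$ (a coincidence would force $\delta$ to a boundary value), the three points remain in the same cyclic order throughout, which places $p$ on the arc from $v_5$ to $v_4$. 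Aside from this geometric check, the remaining work is the algebraic simplification of $\cos\delta$: routine, but it leans repeatedly on the identities $2\sinh\ell\cosh\ell = \sinh 2\ell$, $2\sinh^2\ell = \cosh 2\ell - 1$, and $\cosh^2 2\ell - \sinh^2 2\ell = 1$, so must be carried out with care.
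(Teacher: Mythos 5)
Your route is genuinely different from the paper's. The paper introduces an auxiliary length $z$ (with $\cosh z = 1/\sin\phi$, where $\phi$ is the Lambert-quadrilateral angle at $p$), shows $y$ and $z$ both increase in $x$, defines $X_0$ by the monotone condition $y+z=2\ell$, and then reads the limits of $\delta$ off the construction. You instead derive the closed form
\[
  \cos\delta \;=\; \cosh\ell\cosh 2\ell - \cosh\ell\,(\cosh 2\ell - 1)\cosh x ,
\]
and extract $X_0$, $X_1$, strict monotonicity in $x$, and both limits directly from this one expression. The formula is correct (it is $\cosh(2\ell-y)\sin\phi$ after the hyperbolic addition law, using $\cosh y\sin\phi=\cosh\ell$, $\sinh y\sin\phi=\sinh\ell\cosh x$, and $\sinh 2\ell\sinh\ell=\cosh\ell(\cosh 2\ell-1)$), and the $X_0$ it yields agrees with the paper's --- modulo an apparent typo there, where the displayed formula for $X_0$ has $\sinh\ell$ in the numerator in place of $\sinh^2\ell$. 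Exhibiting this closed form is a genuine improvement: the paper never writes it down, and it makes both stated limits immediate.

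The gap, however, is not where you locate it. The ``main obstacle'' you identify --- that $p$ lies between $v_5$ and $v_4$ on $\lambda_5$ --- dissolves once $v_4$ exists: the triangle $p\,v_3\,v_4$ has angle sum less than $\pi$, so its angle at $p$ is strictly less than $\pi/2$; of the two candidates $\phi$ and $\pi-\phi$ (according as $v_4$ lies on the far or the near side of $p$ from $v_5$), only $\phi$ qualifies, and this forces the order $v_5,p,v_4$. Your continuity argument is therefore unnecessary, and as written it also presupposes that $v_4$ exists for all $x$ in the interval, which brings us to the actual gap. You derive the closed form for $\cos\delta$ \emph{under the hypothesis} that the perpendicular to $\lambda_3$ at $v_3$ meets $\lambda_5$, and then define $X_0$ by setting the resulting expression equal to $1$; this is circular as it stands, since nothing yet shows that $v_4$ exists for $x\in(X_0,X_1)$. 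What is missing is the companion fact to $\cos\phi=\sinh\ell\sinh x$: a perpendicular to $\lambda_3$ erected at distance $a$ from $p$ on the $v_3$-side meets $\lambda_5$ if and only if $\cosh a\,\sin\phi < 1$, and is asymptotic to it exactly at equality --- precisely what the paper's auxiliary quantity $z$ encodes via $\cosh z = 1/\sin\phi$. With that fact in place, the strict monotonicity of your algebraic expression in $x$ does give existence of $v_4$ throughout $(X_0,X_1)$, and your argument is complete.
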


\begin{proof} A quadrilateral in $\mathbb{H}^2$ with three right angles (called a ``Lambert quadrilateral'' in the classical geometry literature), is determined by the lengths of its two sides that have right angles at both of their vertices. If these two side lengths are $\ell$ and $x$ then the final vertex angle $\gamma < \pi/2$ satisfies:\begin{align}\label{gamma}
    \cos \gamma = \sinh \ell \sinh x.\quad \mbox{(See \cite[Thrm.~3.5.9]{Ratcliffe}.)} \end{align}
Note that this implies in particular that the product $\sinh\ell\sinh x$ can be at most 1. If this product is at least 1 then the two geodesics meeting the sides with lengths $\ell$ and $x$ at right angles to their endpoints do not meet in $\mathbb{H}^2$; if it equals $1$ then the two sides are ``parallel'', ie.~asymptotic, the quadrilateral has finite area, and we say it has a single ``ideal vertex''.

If $\sinh\ell \sinh x < 1$ then applying \cite[Thrm.~3.5.8]{Ratcliffe} to the Lambert quadrilateral above gives the relation between $\ell$, $\gamma$, and the length $y$ of the side opposite the one with length $\ell$ recorded in the first equality below. Equation (\ref{gamma}) then allows us to replace $\sin\gamma$ in the denominator, yielding the second equality:
\[ \cosh y = \frac{\cosh \ell}{\sin\gamma} = \frac{\cosh\ell}{\sqrt{1-\sinh^2 x\sinh^2\ell}}. \]
Note that if we fix $\ell$ and allow $x$ to vary, this formula defines $y$ as an \textit{increasing} function of $x$. Setting $y = 2\ell$ and solving for $x$  yields the formula for $X_1$ given in the Lemma's statement.

By the above, for any $x<X_1$, the following construction yields a geodesic arc $\alpha_3$ crossing a geodesic ray $\beta_5$ that contains $\alpha_5$: Let $\alpha_1$ and $\alpha_2$ be arcs of lengths $\ell$ and $x$, respectively, meeting at right angles at a point $v_1$; let $\alpha_3$ be a geodesic arc of length $2\ell$ meeting $\alpha_2$ at right angles at its endpoint $v_2$ opposite $v_1$, so that $\alpha_1$ and $\alpha_3$ belong to the same half-space bounded by the geodesic containing $\alpha_2$; and let $\beta_5$ be the geodesic ray meeting $\alpha_1$ at right angles at its endpoint $v_5$ opposite $v_1$, and contained in the same half-space bounded by the geodesic containing $\alpha_1$ as $\alpha_3$.

\begin{figure}[h]
 \begin{center}
    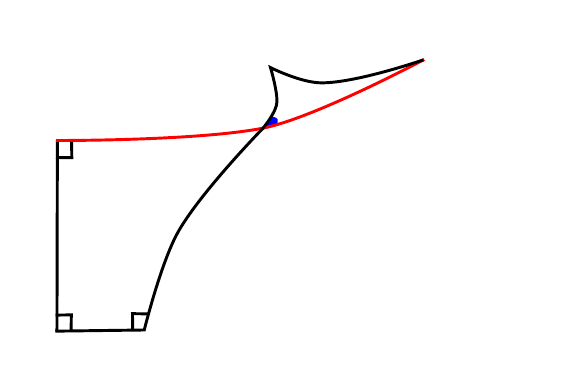
    \caption{Accompanies the proof of Lemma~\ref{pentagon}.}
\end{center}\label{F: mega pentagon}
\end{figure}

Still taking $x<X_1$, for $\alpha_3$ and $\beta_5$ as above, let $c = \alpha_3\cap \beta_5$ and let $\beta_3$ be the geodesic ray with endpoint $c$, contained in the geodesic containing $\alpha_3$, in the opposite half-space bounded by the geodesic containing $\beta_5$ from $\alpha_2$. There exists a point on $\beta_3$ at a distance $z>0$ from $c$ such that a geodesic ray through this point, at right angles to $\beta_3$ in the half-space that $\beta_5$ points into, is parallel to $\beta_5$. This point can be found because the geodesic through it forms a triangle, together with segments of $\beta_3$ and $\beta_5$, that has one ideal vertex and angles of $\pi/2$ and $\gamma$ (as defined in equation (\ref{gamma}) above) at its other two angles. The distance $z$ satisfies $\cosh z = 1/\sin \gamma$, see \cite[Thrm.~3.5.7]{Ratcliffe}. Again fixing $\ell$, and regarding $\gamma$ and hence $z = z(\gamma)$ as functions of $x$, we find that since $\gamma$ is a decreasing function of $x$, then $z$ is an increasing function of $x$.

As $x\to 0^+$, $\gamma\to \pi/2$, $y\to\ell$, and $z\to 0$; as $x\to X_1$, $y\to 2\ell$. Thus $y+z$ is less than $2\ell$ for $x$ near $0$ and greater than $2\ell$ for $x$ near $X_1$. Since $y$ and $z$ each increase with $x$ their sum does as well, so there is a unique $x=X_0$ with $0<X_0<X_1$ for which $y+z = 2\ell$, and for all $x\in(X_0,X_1)$, $y < 2\ell < y+z$. By the equations above and the angle addition law for hyperbolic cosine, $X_0$ satisfies:
\[ \cosh(2\ell) = \frac{\cosh\ell+\sinh\ell\sinh X_0\cosh X_0}{1-\sinh^2 X_0\sinh^2\ell}\]

Now given $x\in (X_0,X_1)$, let $\alpha_1$, $\alpha_2$, and $\alpha_3$ be geodesic arcs as described above, with lengths $\ell$, $x$, and $2\ell$, respectively; let $v_1 = \alpha_1\cap\alpha_2$ and $v_2 = \alpha_2\cap\alpha_3$; and let $\beta_5$ be the geodesic ray described above with its endpoint at the other endpoint $v_5$ of $\alpha_1$. Because $2\ell < y+z$, a geodesic ray from the other endpoint $v_3$ of $\alpha_3$, at right angles to $\alpha_3$ and pointing into the same half-space bounded by the geodesic containing it as $\beta_5$, does intersect $\beta_5$. Let $\alpha_4$ be the arc contained in this ray and joining $v_3$ to its point $v_4$ of intersection with $\beta_5$. Finally, let $\alpha_5$ be the arc of $\beta_5$ joining $v_4$ back to $v_5$. This completes the construction of the self-intersecting pentagon.
\end{proof}

\begin{proposition}\label{approx one jump}
    For any initial data $o$, $\bv$, $s$, $\theta$, $\ell$ of a walk with jumps, such that the jump angle $\theta = \pi/2$, there exist  $0<X_0<X_1$ (as defined above) such that for any $x\in(X_0,X_1)$ and $T_0>0$, there exist two walks with jumps sharing the given initial data and a common endpoint such that:\begin{itemize}
        \item the first has duration $T\ge T_0$ and a single jump at time $0$; and
        \item the second has duration $T_1 < T$, its first jump (a burst of two) at time $x/s$, and a total number of jumps that increases linearly with $T_0$.
    \end{itemize}
    More precisely: there exists $K>0$, depending only on $x$, such that $T\in [T_0,T_0+K)$, and $b = b(x) >0$ such that the second walk's jump times $(t_1,t_2,\hdots,t_n)$ satisfy $t_1 = x/s$, $t_2<K$, and $t_i = t_2+(i-2)b$ for each $i\geq 2$.
\end{proposition}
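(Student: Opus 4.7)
\medskip

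The strategy is to use the self-intersecting pentagon from Lemma~\ref{pentagon} to arrange walks $1$ and $2$ so that both pass through the pentagon vertex $v_4$, and then to use a periodic jump pattern for walk $2$'s tail to match walk $1$'s endpoint. Place the origin $o = v_1$ with initial tangent vector $\bv$ along $\alpha_2$, so that the $\pi/2$ counterclockwise jump direction is along $\alpha_1$. By Lemma~\ref{wwj via isoms}, walk $1$ (single jump at time $0$) traces the path jumping $v_1 \to v_5$ along $\alpha_1$, then walking along $\beta_5$ (containing $\alpha_5$) for duration $T$. Walk $2$'s initial portion traces $v_1 \to v_2$ along $\alpha_2$ (duration $x/s$), bursts $v_2 \to v_3$ along $\alpha_3$ (length $2\ell$), then walks along the geodesic $\beta_4$ containing $\alpha_4$. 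Setting $t_2 = (x + |\alpha_4|)/s$ places walk $2$'s second single jump precisely at $v_4$, the intersection point of $\beta_4$ and $\beta_5$; continuity of $|\alpha_4|$ in $x \in (X_0, X_1)$ shows $t_2$ is bounded by a constant $K = K(x)$.

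For the remaining $n - 2$ single jumps, use a constant inter-jump gap $b = b(x) > 0$, so that $t_i = t_2 + (i-2)b$ for $i \geq 2$. Let $H = g_{x/s} f^2 g_{|\alpha_4|/s} f$ denote walk $2$'s isometry through its second jump, and let $M = g_b f$. With $\theta = \pi/2$, $f$ is diagonal in the matrices of the proof of Proposition~\ref{mo buttah}, and direct computation gives $\operatorname{tr}(M) = 2 \cosh(sb) \cosh\ell > 2$, so $M$ is a hyperbolic translation for all $b > 0$. Walk $2$'s endpoint is $H M^{n-2} g_{T_1 - t_n}(o)$. As $n$ varies over positive integers and $T_1 - t_n$ varies in $[0, b)$, these endpoints form a family of arcs in $\mathbb{H}^2$, each arc (for fixed $n$) parametrized continuously by $T_1 - t_n$ and obtained from the arc for $n-1$ by applying the hyperbolic translation $H M H^{-1}$. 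Choose $b$ so that consecutive arcs meet $\beta_5$ transversely with intersection points spaced along $\beta_5$ by a bounded distance $d_M = d_M(b, x)$. Enlarging $K$ if needed to exceed $d_M/s$, then for any $T_0 > 0$, as $T$ ranges over $[T_0, T_0 + K)$, walk $1$'s endpoint $f g_T(o)$ sweeps a segment of $\beta_5$ of length exceeding $d_M$, which must contain an arc-intersection point. For the corresponding $(T, n, T_1 - t_n)$ walk $1$ and walk $2$ share an endpoint, with $n$ growing linearly in $T_0$ since each iteration of $M$ advances walk $2$'s endpoint along $\beta_5$ by $d_M$.

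The main technical step is choosing $b = b(x) > 0$ so that the axis of $H M H^{-1}$ is transverse to $\beta_5$ at a controlled angle, with the spacing $d_M$ explicitly bounded. Using the matrix formula for $M = g_b f$, its axis varies continuously with $b$, sweeping from $\lambda_f$ (as $b \to 0$) toward a specific limiting geodesic (as $b \to \infty$); conjugating by $H$ sweeps a corresponding one-parameter family, and an intermediate-value argument combined with the explicit hyperbolic trigonometry of $M$'s axis produces the required $b$. The verification that the resulting arcs meet $\beta_5$ with bounded spacing, and that the spacing depends continuously on $x$, is the subsequent computation.
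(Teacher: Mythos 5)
Your proof takes the same broad strategy as the paper --- realize walk~1's walk axis and walk~2's early segments inside the self-intersecting pentagon of Lemma~\ref{pentagon}, then run walk~2 with a periodic jump pattern whose crossings of $\beta_5$ match walk~1's endpoint for some $T$ --- but the crucial detail where you diverge leaves a genuine gap.

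The paper does \emph{not} set $t_2$ so that the second jump lands at $v_4$ (i.e.~$d_2 = 0$). Instead it chooses $t_2$ so that the second jump segment is \emph{bisected} by $\beta_5$, meaning the second jump occurs at a definite distance $d_2 > 0$ beyond $v_4$, determined by the explicit formula $\cosh d_2 = \sqrt{\sinh^2(\ell/2)+\sin^2\delta}\,/\,(\cosh(\ell/2)\sin \delta)$. It then sets $b = 2d_2/s$. The payoff is that the whole picture becomes symmetric under $\pi$-rotation about successive points of $\beta_5$: each subsequent walk segment of walk~2 crosses $\beta_5$, each jump segment is bisected by $\beta_5$, and the crossing points advance along $\beta_5$ by exactly $2h$ per period. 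This gives the periodicity and the bounded spacing for free, with explicit constants $K = (\ell_5 + 2h)/s$, and nothing further to verify.

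Your version drops the bisection condition and instead asserts that one can choose $b = b(x)$ so that the arcs $H M^{n-2} g_{T_1-t_n}(o)$ meet $\beta_5$ transversely with bounded spacing $d_M$. This is exactly where the argument has to be carried out, and it is not. For a generic $b$ the axis of $HMH^{-1}$ neither coincides with nor is asymptotic to $\beta_5$; the iterates $HM^kH^{-1}$ then drag the initial arc toward the attracting fixed point of $HMH^{-1}$, and after finitely many steps the arcs stop crossing $\beta_5$ entirely. To guarantee that the arcs keep crossing $\beta_5$ for all $n$ (which is essential, since $T_0$ and hence $n$ must be unbounded), you would need the axis of $HMH^{-1}$ to share at least its attracting endpoint with $\beta_5$, or you would need to produce a different mechanism forcing perpetual crossing. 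You have only one free parameter $b$ and, in general, two conditions (the two ideal endpoints of the axis), so an intermediate-value argument alone will not suffice; one would need to exploit the specific geometry of $H$ and $\beta_5$, which in effect is what the paper's bisection trick does. As written, ``choose $b$ so that consecutive arcs meet $\beta_5$ transversely with intersection points spaced \ldots by a bounded distance'' is the statement to be proved, not a step that can be asserted.

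A secondary point: even granting that the arcs repeatedly cross $\beta_5$, your argument needs the crossing points to advance by a \emph{uniform} amount $d_M$ independent of $n$, in order to conclude $T \in [T_0, T_0+K)$ with $K$ depending only on $x$. The paper gets this uniformity as a consequence of the rotational symmetry; your proposal would have to establish it separately.
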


\begin{proof}
For the value of $\ell$ in the given initial data, let $0<X_0 < X_1$ be as in Lemma \ref{pentagon}. Then for $x\in (X_0,X_1)$, arrange the pentagon given by that result so that its vertex $v_1$ is at $o$ and $\bv$ points along the side $\alpha_2$ of length $x$ toward $v_2$. Then the side $\alpha_1$ of length $\ell$ is counterclockwise from $\alpha_2$ at $v_1 = o$. As in the proof of Lemma \ref{pentagon}, let $\beta_5$ be the ray sharing the other endpoint $v_0$ of $\alpha_1$ and containing the pentagon's side $\alpha_5$. Since the first walk with jumps has a single jump at time $0$, which has length $\ell$, its sole nontrivial walk segment is contained in $\beta_5$.

\begin{figure}[ht]
    \begin{center}
    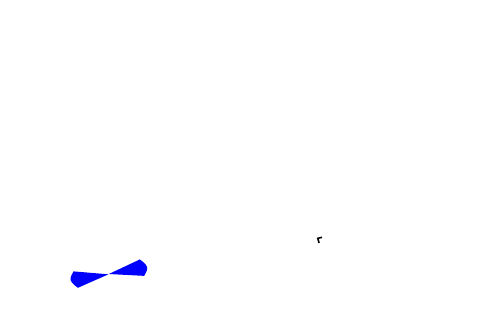
    \end{center}
    \caption{The triangle at the tip of the pentagon}
    \label{triangle}
\end{figure}

Since the second walk is prescribed to have a burst of two jumps at $t_1 = x/s$, its first walk segment is $\alpha_2$, its first jump segment is $\alpha_3$, and its second walk segment is contained in the geodesic ray $\beta_4$ that has an endpoint at $v_3$ and contains $\alpha_4$. We prescribe the second jump time $t_2$ to be larger than $\ell_4/s$, where $\ell_4$ is the length of $\alpha_4$---so the second walk segment contains $\alpha_4$---and with the property that the second jump axis is bisected by its intersection with $\beta_5$. Precisely: taking $d_2 = st_2 - \ell_4$, we must have\begin{align}\label{are too}
    \cosh d_2 = \frac{\sqrt{\sinh^2(\ell/2)+\sin^2\delta}}{\cosh(\ell/2)\sin \delta}, \end{align}
where $\delta$ is the pentagon's angle at its vertex $v_4$. This follows from the hyperbolic laws of sines and cosines; see Figure \ref{triangle}. There, the hyperbolic law of cosines gives that $\cosh h = \cosh d_2 \cosh(\ell/2)$, and the hyperbolic law of sines that $\sinh h = \sinh(\ell/2)/\sin\delta$. Substituting for $h$ in the first equation yields the formula (\ref{are too}).

If we now prescribe for each $i>2$ that $t_i - t_{i-1} = b \doteq 2d_2/s$, then the $i$th jump segment is also bisected by its intersection with $\beta_5$ for each such $i$. To see this, rotate the triangle of Figure \ref{triangle} by $180$ degrees about its vertex of intersection between the sides of length $h$ and $\ell/2$: the resulting isometric copy also has its side of length $h$ in $\beta_5$, its side of length $\ell/2$ is the other half of the second walk's second jump segment, and its side of length $d_2$ is the first half of the third walk segment. Rotating this triangle $180^{\circ}$ around its other non right-angled vertex yields the second half of the third walk segment and the first half of the third jump segment, and we proceed further by rotating around subsequent non right-angled vertices.

We now take $K = (\ell_5 + 2h)/s$, where $\ell_5$ is the length of the pentagon's side $\alpha_5$ and $h$ is as in Figure \ref{triangle}. Then for any $T_0>0$, there is a walk with given initial data, a single jump at time $0$, and duration in $[T_0, T_0+K)$ that terminates at one of the intersection points of $\beta_5$ with a walk segment described in the paragraph above. These are segments of the second walk with jumps, which we run for as long as it takes to meet the first.
\end{proof}

Note that the above construction of a specific pair of walks that have a common endpoint, has a biological interpretation:  one way in which the endpoint can fail to faithfully encode the jump times is that a jump at time zero can be mimicked by a sequence of jumps at later times.  

We now record an observation that will allow us to create ``rotationally symmetric'' walks with jumps.

\begin{lemma}\label{rotational symmetry}
For a walk with jumps with initial data $o$, $\bv$, $s$, $\theta$, $\ell$, and duration $T$, if the jump times $(t_1,\hdots,t_{2n})$ and burst vector $(j_1,\hdots,j_{2n})$ are \mbox{\rm symmetric} in the sense that $T/2 - t_i = t_{2n+1-i} - T/2$ and $j_i = j_{2n+1-i}$ for each $i\le n$, then:\begin{itemize}
    \item the associated walk with jumps path is itself symmetric under $\pi$-rotation around the walk's midpoint;
    \item denoting this rotation as $\rho$, the walk's endpoint and endvector are respectively $\rho(o)$ and $d\rho_o(-\bv)$; and
    \item the word $w$ supplied by Lemma \ref{wwj via isoms} that carries $(o,\bv)$ to the endpoint-endvector pair $(p,\bw)$ of the walk with jumps satisfies $\rho w\rho = w^{-1}$.
\end{itemize}\end{lemma}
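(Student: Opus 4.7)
The plan is to establish a single algebraic identity that immediately yields the third bullet, then deduce the other two from it. Let $\sigma\in\operatorname{PSL}_2(\reals)$ denote the $\pi$-rotation about $o$. Because $\gamma_g$ and $\lambda_f$ both pass through $o$, conjugation by $\sigma$ reverses the translation along each: $\sigma g_t \sigma = g_{-t}$ and $\sigma f \sigma = f^{-1}$. Set $\tau_i \doteq t_i - t_{i-1}$ for $1 \le i \le 2n+1$, with the conventions $t_0 = 0$ and $t_{2n+1} = T$, so that Lemma~\ref{wwj via isoms} expresses the walk's isometry as
\[ w = g_{\tau_1}\,f^{j_1}\,g_{\tau_2}\,f^{j_2}\cdots g_{\tau_{2n}}\,f^{j_{2n}}\,g_{\tau_{2n+1}}. \]
Split the middle syllable $g_{\tau_{n+1}}=g_{\tau_{n+1}/2}\,g_{\tau_{n+1}/2}$ and define the ``first half''
\[ W \doteq g_{\tau_1}\,f^{j_1}\cdots g_{\tau_n}\,f^{j_n}\,g_{\tau_{n+1}/2}. \]
The symmetry hypotheses translate directly to $\tau_i = \tau_{2n+2-i}$ and $j_i = j_{2n+1-i}$, which together imply that the second half of $w$ (namely $g_{\tau_{n+1}/2}f^{j_{n+1}}g_{\tau_{n+2}}\cdots g_{\tau_{2n+1}}$) is precisely the syllabic reverse of $W$. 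Combining this with the $\sigma$-conjugation identities above yields the key equality $w = W\sigma W^{-1}\sigma$.

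Set $\rho \doteq W\sigma W^{-1}$ and $m \doteq W(o)$. Then $\rho(m) = m$ and $d\rho_m = -I$, so $\rho$ is the $\pi$-rotation about $m$; moreover a direct arclength computation (using $t_n + t_{n+1} = T$ and the symmetry of the $j_i$) shows that $m$ lies at half the total arclength of the walk-with-jumps path, so $m$ is its midpoint. The identity $w = \rho\sigma$ now yields the third bullet at once: $\rho w \rho = \sigma\rho = (\rho\sigma)^{-1} = w^{-1}$, since $\rho$ and $\sigma$ are involutions. It also yields the second bullet by evaluating $w = \rho\sigma$ on $(o,\bv)$: Lemma~\ref{wwj via isoms} identifies $w(o,\bv) = (p,\bw)$, while $\rho\sigma(o,\bv) = \rho(o,-\bv) = (\rho(o),\,d\rho_o(-\bv))$, so $p = \rho(o)$ and $\bw = d\rho_o(-\bv)$.

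For the first bullet I would argue that the walk-with-jumps path traversed in reverse, from $p$ back to $o$, is itself the walk-with-jumps path associated to some walk with jumps---the ``time-reversed'' walk. Checking the angle relations in Definition~\ref{D: walkswithjumps} shows that this reversed path corresponds to a walk with initial data $(p,-\bw,s,\theta,\ell)$, and the symmetry hypotheses make its jump times and burst vector again $(t_1,\ldots,t_{2n})$ and $(j_1,\ldots,j_{2n})$. Since the second bullet shows that $\rho$ carries $(o,\bv)$ to $(p,-\bw)$, it carries the original walk's initial data to that of the time-reversed walk, and hence by Lemma~\ref{wwj via isoms} carries the two walk-with-jumps paths to each other. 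But these paths are the same subset of $\Htwo$ traversed in opposite directions, so that subset is $\rho$-invariant, as claimed.

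The main obstacle is the angle-bookkeeping in the first bullet: verifying that along the reversed path each jump still occurs at counterclockwise angle $\theta$ from the reversed direction of travel, and that the outgoing walk direction after each jump remains at angle $-\theta$ from the jump axis. Both reduce to the observation that if the outgoing walk direction at $q_i$ makes angle $-\theta$ with the forward jump-axis direction from $p_i$ to $q_i$, then its negation---the incoming direction on the reversed path at $q_i$---makes counterclockwise angle $\theta$ with the reversed axis direction from $q_i$ to $p_i$, and symmetrically at the pre-jump vertex $p_i$. Once this pairing is set up, the remaining steps are formal.
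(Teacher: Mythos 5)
Your proof is correct, and the core algebraic ingredient is the same one the paper uses: conjugation by the $\pi$-rotation $\sigma$ about $o$ sends $g_t\mapsto g_{-t}$ and $f\mapsto f^{-1}$ (because both axes run through $o$), which combined with the symmetry hypotheses on $\tau_i$ and $j_i$ turns the ``second half'' of $w$ into $\sigma W^{-1}\sigma$, giving $w=W\sigma W^{-1}\sigma=\rho\sigma$ with $\rho=W\sigma W^{-1}$ the $\pi$-rotation about $m=W(o)$. The paper reaches exactly this identity, just written as $w=w_0w_1$ with $\rho_0w_i\rho_0=w_{1-i}^{-1}$ and $\rho=w_0\rho_0w_0^{-1}$. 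Where you genuinely diverge is in the ordering and in the proof of the first bullet. The paper proves the path symmetry first, by a direct geometric induction outward from the middle segment $\alpha_{n+1}$ (showing $\rho$ fixes $\alpha_{n+1}$, then exchanges $\beta_n\leftrightarrow\beta_{n+1}$, then $\alpha_n\leftrightarrow\alpha_{n+2}$, and so on), and then deduces the endpoint/endvector statement from that plus Remark~\ref{R: endvector}, leaving the algebraic identity for last. You instead lead with the algebraic identity $w=\rho\sigma$, extract the second and third bullets from it by short formal computations, and only then get the first bullet from a ``time-reversal'' argument: the path traversed backward is the walk-with-jumps path for the initial data $(p,-\bw,s,\theta,\ell)$ with the same (symmetric) jump data, and $\rho$ carries $(o,\bv)$ to $(p,-\bw)$, so by equivariance of the construction it carries the path to itself. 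That last step leans on the (true but tacit) fact that isometries map walk-with-jumps paths to walk-with-jumps paths with transported initial data and identical jump data; this is implicit in Lemma~\ref{wwj via isoms first half} and used silently throughout the paper, so it is not a gap, but it is worth flagging as the one place your argument invokes something not stated verbatim. Overall, your derivation of the second bullet from $w=\rho\sigma$ is cleaner than the paper's route through the geometric symmetry, while the paper's middle-out geometric argument for the first bullet is more self-contained than yours.
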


\begin{proof}
We call the $i$th walk segment (as in Definition \ref{D: walkswithjumpspath}) $\alpha_i$, for $1\leq i\le 2n+1$, and the $j$th jump segment $\beta_j$ for $1\leq j\le 2n$. Note that $\alpha_i$ has length $\ell_i = s \delta_i$ for $1< i \leq 2n$, where $\delta_i = t_i - t_{i-1}$; and $\ell_1 = s t_1$ and $\ell_{2n+1} = s(T - t_{2n})$. By hypothesis: 
    \[ T/2-t_1 = t_{2n} - T/2,\quad\Rightarrow\quad T - t_{2n} = t_1,\quad\Rightarrow\quad \ell_1 = \ell_{2n+1}. \] 
Similarly, for $1<i\leq n$, $\ell_i = \ell_{2n+2-i}$ since:\begin{align}\label{delta eye}
    \delta_i & = \left[ (T/2 - t_i) - (T/2 - t_{i-1})\right] \\
        & = \left[ (t_{2n+1-i} - T/2) - (t_{2n+1 - (i-1)} - T/2)\right] = \left[ t_{2n+2-i} - t_{2n+1-i}\right] = \delta_{2n+2-i}.\nonumber\end{align}
The symmetry hypothesis on the jump vector similarly implies that $\beta_j$ and $\beta_{2n+1-j}$ have identical lengths for each $j$. 

It follows from this that the midpoint of the associated walk with jumps path is the midpoint of $\alpha_{n+1}$, so the rotation $\rho$ by an angle $\pi$ around this point takes $\alpha_{n+1}$ to itself. By construction of the walk with jumps, each of $\beta_n$ and $\beta_{n+1}$ intersects $\alpha_{n+1}$ at an angle of $\pi-\theta$ clockwise from the tangent vector pointing into $\alpha_{n+1}$ at their point of intersection. Since $\rho$ exchanges the endpoints of $\alpha_{n+1}$, and $\beta_n$ and $\beta_{n+1}$ have identical lengths, it also exchanges $\beta_n$ and $\beta_{n+1}$. Now $\alpha_n$ and $\alpha_{n+2}$ respectively intersect $\beta_n$ and $\beta_{n+1}$, each at an angle of $\pi-\theta$ \textit{counter}clockwise from the tangent vector pointing into $\beta_n$ or $\beta_{n+1}$, so $\rho$ exchanges $\alpha_n$ with $\alpha_{n+2}$ since $\ell_n  = \ell_{n+2}$.

Working our way outward from the midpoint as above, we ultimately find that $\rho$ takes the entire walks with jumps path to itself. The assertion about the walk's endpoint and endvector now follow from this and Remark \ref{R: endvector}.

We now interpret the symmetry condition in terms of the word $w$ supplied by Lemma \ref{wwj via isoms} that carries $(o,\bv)$ to the endpoint-endvector pair $(p,\bw)$ of the walk with jumps. For this walk, $w$ has the form
\[ w = g_{t_1}f^{j_1}g_{\delta_2}f^{j_2}\hdots g_{\delta_{2n}}f^{j_{2n}}g_{T-t_{2n}}, \]
where for each $i>1$, $\delta_i = t_i - t_{i-1}$ is the difference between successive jump times defined above. From (\ref{delta eye}) we have for each such $i$ that $\delta_i = \delta_{2n+2-i}$. In particular, $\delta_{n+2} = \delta_n$, $\delta_{n+3} = \delta_{n-1}$, and so forth through $\delta_{2n} = \delta_2$. Also using that $T-t_{2n}=t_1$, we obtain the following visibly symmetric form for $w$:
\[ w = \left(g_{t_1}f^{j_1}g_{\delta_2}\hdots g_{\delta_n}f^{j_n}\right)
    g_{\delta_{n+1}}
        \left(f^{j_n}g_{\delta_n}\hdots g_{\delta_2}f^{j_1}g_{t_1}\right) \]
We now write $w = w_0 w_1$ where $w_0 = g_{t_1}f^{j_1}g_{s_2}\hdots g_{s_n}f^{j_n}g_{s_{n+1}/2}$ is its ``first half'' and $w_1 = g_{s_{n+1}/2}f^{j_n}g_{s_n}\hdots g_{s_2}f^{j_1}g_{t_1}$ its second. By Lemma \ref{wwj via isoms}, $w_0$ carries the origin $o$ to the walk's midpoint. Therefore the rotation $\rho$ by $\pi$ about this point has the form $w_0 \rho_0 w_0^{-1}$, where $\rho_0$ is the $\pi$-rotation about $o$. Recalling that a $\pi$-rotation is its own inverse, we consider the conjugate of $w$ by $\rho$:\begin{align*}
    \rho w \rho^{-1} & = \left(w_0\rho_0w_0^{-1}\right) w_0w_1 \left(w_0\rho_0 w_0^{-1}\right) \\
    & = w_0 \rho_0 w_1 w_0 \rho_0 w_0^{-1} = w_0 (\rho_0 w_1 \rho_0)(\rho_0 w_0\rho_0) w_0^{-1} \end{align*}
We claim now that $\rho_0 w_i \rho_0 = w_{1-i}^{-1}$ for $i = 0$ and $1$, from which it will follow that $\rho w\rho = w^{-1}$ using the above. This uses the fact that $\rho_0$ conjugates $f$ to $f^{-1}$, and likewise $g_t$ for any $t\in\mathbb{R}$, since their axes run through its fixed point $o$. (One can check this fact directly by taking $o = \mathbf{0}$ and $\bv = \mathbf{e}_1$ in the Poincar\'e disk model and using the explicit descriptions below.)
\[ \rho_0 = \left(\begin{smallmatrix} i & 0 \\ 0 & -i \end{smallmatrix}\right)
    \quad\mbox{and}\quad
    g_t = \left(\begin{smallmatrix} \cosh(t/2) & \sinh(t/2) \\ \sinh(t/2) & \cosh(t/2) \end{smallmatrix}\right) \]
Now inserting $\rho_0\rho_0$, a form of the identity, between each syllable of $w_0$ for instance, we obtain:\begin{align*}
    \rho_0w_0\rho_0 & = \left(\rho_0g_{t_1}\rho_0\right)\left(\rho_0f^{j_1}\rho_0\right)\left(\rho_0g_{s_2}\rho_0\right)\hdots\left(\rho_0g_{s_{n+1}/2}\rho_0\right) \\
    & = g_{t_1}^{-1} f^{-j_1} g_{s_2}^{-1}\hdots g_{s_{n+1}/2}^{-1} = w_1^{-1}
    \end{align*}
Likewise for $w_1$, and the claim, and hence the lemma, holds.
\end{proof}

We now exploit Lemma \ref{rotational symmetry} to promote Proposition \ref{approx one jump}'s walks with jumps to a pair sharing an endpoint \textit{and endvector}.

\begin{proposition}\label{same endvector}
    For any initial data $o$, $\bv$, $s$, $\theta$, $\ell$ of a walk with jumps, such that the jump angle $\theta = \pi/2$, there exist $b,K>0$ and $0<X_0<X_1$ such that for any $T_0>0$ and $x\in(X_0,X_1)$, there exist two walks with jumps sharing the given initial data and a common endpoint and endvector:\begin{itemize}
        \item The first has duration $T_2\in [2T_0,2T_0+2K)$ and two jumps, at time $0$ and $t=T_2$, with burst vector $(1,1)$;
        \item The second has jump times $(t_1,\hdots,t_{2n})$ and burst vector $(2,1,\hdots,1,2)$ for some $n\ge 2$, where $t_1 = x/s$, $t_{2n} = 2T_1 - t_1$ and for $2\le i\leq n$, $t_i = t_2+(i-2)b$ and $t_{2n+1-i} = 2T_1 - t_i$, where $2T_1$ is its duration.
    \end{itemize}
\end{proposition}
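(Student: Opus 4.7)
The plan is to upgrade the common-endpoint pair from Proposition \ref{approx one jump} to a pair sharing both endpoint and endvector by ``doubling'' each walk into a palindromic one and invoking Lemma \ref{rotational symmetry}. That lemma guarantees the endpoint-endvector pair of a symmetric walk is determined entirely by the position of the midpoint of its middle walk segment, so two symmetric walks sharing such a midpoint automatically share both endpoint and endvector.

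First I would apply Proposition \ref{approx one jump} with the given initial data and time $T_0$ (taken large enough that the long walk has $n\geq 2$ distinct jump times) to produce two walks with common endpoint $p$: a \emph{short walk} of duration $T \in [T_0, T_0 + K)$ with a single jump at time $0$; and a \emph{long walk} of duration $T_1 < T$ with jump times $(t_1, \ldots, t_n)$ (where $t_1 = x/s$, $t_2 < K$, and $t_i = t_2 + (i-2)b$ for $i \geq 2$) and burst vector $(2,1,\ldots,1)$. Then I would define the two doubled walks of the proposition: the first of duration $T_2 \doteq 2T$, jump times $(0, T_2)$, and burst vector $(1,1)$; the second of duration $2T_1$, jump times $(t_1, \ldots, t_n, 2T_1 - t_n, \ldots, 2T_1 - t_1)$, and burst vector $(2, 1, \ldots, 1, 2)$. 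A direct check confirms that both satisfy the symmetry hypotheses of Lemma \ref{rotational symmetry}: palindromic burst vectors, and jump times symmetric about the midpoint of the duration.

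The crux is verifying that both doubled walks share the same midpoint of their middle walk segment $\alpha_{n+1}$. The first doubled walk's middle walk segment runs from its post-first-jump location $q_1$ for a distance $sT_2 = 2sT$, so its midpoint lies at distance $sT$ from $q_1$ along the walk direction --- precisely where the short walk from Proposition \ref{approx one jump} terminates, namely at $p$. The second doubled walk agrees with the long walk through the $n$th jump (the initial data, jump times, and burst vector match through that point), so its post-$n$th-jump location equals the long walk's $q_n$; its middle walk segment then extends from $q_n$ for distance $2s(T_1 - t_n)$, with midpoint at distance $s(T_1 - t_n)$ along, again the endpoint of the long walk --- hence again $p$. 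Since the $\pi$-rotation $\rho$ about a point is unique, Lemma \ref{rotational symmetry} yields for both doubled walks the endpoint $\rho(o)$ and endvector $d\rho_o(-\bv)$. The bound $T_2 \in [2T_0, 2T_0 + 2K)$ follows from $T \in [T_0, T_0 + K)$.

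The main obstacle is largely conceptual rather than technical: recognizing that Lemma \ref{rotational symmetry} packages the nontrivial endvector-matching as a clean midpoint-matching, after which the combinatorics of the doubling and the verification of symmetry are routine bookkeeping. The design of the burst vector $(2,1,\ldots,1,2)$ is forced: to retain the symmetry around the midpoint of the duration, whatever happens at $t_1$ must be mirrored at $t_{2n}$, which is why the terminal burst of two is needed.
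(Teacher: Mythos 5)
Your proof is correct and takes essentially the same route as the paper: use Proposition \ref{approx one jump} to produce a short and a long walk with common endpoint $p$, double each into a palindromic walk, and invoke Lemma \ref{rotational symmetry}, observing that both doubled walks share the midpoint $p$ of their middle walk segment and hence the same rotation $\rho$, endpoint $\rho(o)$, and endvector $d\rho_o(-\bv)$. Your explicit midpoint computation ($sT$ from $q_1$ for the first doubled walk, $s(T_1-t_n)$ from $q_n$ for the second) just spells out what the paper asserts more tersely as ``these two paths have the same midpoint, since by construction their initial segments are the walks with jumps given by Proposition \ref{approx one jump} and these have a common endpoint.''
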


\begin{proof} For the given initial data, let $0 < X_0 < X_1$ be supplied by Proposition \ref{approx one jump}. Then for $T_0>0$ and $x\in (X_0,X_1)$ let that result supply a duration $T$ for a first walk and jump times $(t_1,t_2,\hdots t_n)$ for a second, with burst vector $(2,1,\hdots,1)$ such that the two walks with jumps share an endpoint. Let $T_1$ be the duration of the second walk with jumps supplied by Proposition \ref{approx one jump}.

We construct the walks with jumps for the current result to be rotationally symmetric in the sense of Lemma \ref{rotational symmetry}, with those supplied by Proposition \ref{approx one jump} as initial segments (up to their midpoints). Thus we take the first to have duration $T_2 \doteq 2 T$ and single jumps at times $0$ and $T_2 = T_2-0$; and the second to have duration $2T_1$ and jump times and burst vector as given in the statement. This walk's jump data is thus symmetric, i.e., ~the jump times satisfy the equation $T_1 - t_i = t_{2n+1-i} - T_1$ for each $i$, and the burst vector entries also match, so by Lemma \ref{rotational symmetry} its associated walk-with-jumps path is symmetric by the rotation $\rho$ about its midpoint.

The first walk with jumps also has symmetric jump data, so its walk-with-jumps path is also rotationally symmetric. But these two paths have the same midpoint, since by construction their initial segments are the walks with jumps given by Proposition \ref{approx one jump} and these have a common endpoint. Hence the current walks with jumps have both a common endpoint $\rho(o)$ and endvector $d\rho_o(-\bv)$.
\end{proof}

The walks constructed in Proposition \ref{same endvector} have different durations and numbers of jumps, but this can be fixed by reversing their roles and doubling again.

\begin{proposition}\label{same duration jumps endvector}
    For any initial data $o$, $\bv$, $s$, $\theta$, $\ell$ of a walk with jumps, such that the jump angle $\theta = \pi/2$, there exist $b,K>0$ and $0<X_0<X_1$ such that for any $T_0>0$ and $x\in(X_0,X_1)$, there exist two walks with jumps having the given initial data and identical durations $T_2 + 2T_1$---for $T_2\in [2T_0,2T_0+2K)$ and some $T_1>0$---numbers of jumps, and endpoint and endvector:\begin{itemize}
        \item The first has jump times $(0,T_2,T_2+t_1,\hdots,T_2+t_{2n})$, for some $n\ge 2$, and burst vector $(1,1,2,1,\hdots,1,2)$;
        \item The second has jump times $(t_1,\hdots,t_{2n},2T_1,2T_1+T_2)$, for the same $n$, and burst vector $(2,1,\hdots,1,2,1,1)$. 
    \end{itemize}        
        In each case above, $t_1 = x/s$, $t_{2n} = 2T_1 - t_1$ and for $2\le i\leq n$, $t_i = t_2+(i-2)b$ and $t_{2n+1-i} = 2T_1 - t_i$.
\end{proposition}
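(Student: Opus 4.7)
The plan is to apply Proposition~\ref{same endvector} and then concatenate its pair of output walks in opposite orders, arriving at two new walks with matching durations and jump counts whose underlying isometries commute trivially because they are equal.

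More specifically, for the given initial data I would invoke Proposition~\ref{same endvector} to furnish walks $W_1$ and $W_2$: $W_1$ has duration $T_2$, jump times $(0,T_2)$, and burst vector $(1,1)$; $W_2$ has duration $2T_1$, jump times $(t_1,\ldots,t_{2n})$ as prescribed, and burst vector $(2,1,\ldots,1,2)$; and the two share an endpoint and endvector. By Lemma~\ref{wwj via isoms} each walk is encoded by an element of $\PSLR$ sending $(o,\bv)$ to its endpoint-endvector pair, and by Proposition~\ref{P: unit tangent bundle determines isometry} these two elements $w_1, w_2\in \PSLR$ must coincide.

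I would then form the two new walks of the present statement by concatenating the jump data of $W_1$ and $W_2$ in the two opposite orders, shifting the second walk's jump times by the duration of the first. The first composite has jump times $(0,T_2,T_2+t_1,\ldots,T_2+t_{2n})$ and burst vector $(1,1,2,1,\ldots,1,2)$; the second has jump times $(t_1,\ldots,t_{2n},2T_1,2T_1+T_2)$ and burst vector $(2,1,\ldots,1,2,1,1)$. Direct inspection against Definition~\ref{D: walk and jump axes} shows that the words assigned to these walks are $w_1\cdot w_2$ and $w_2\cdot w_1$ respectively, once one notices that the trailing $g_{T-s_m}$-syllable of one walk's word and the leading $g_{s_1'}$-syllable of the next's simply multiply together in $\PSLR$. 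Both composite walks evidently have duration $T_2+2T_1$ and total jump count $2+(2n+2)=2n+4$, and because $w_1=w_2$ we have $w_1 w_2 = w_2 w_1 = w_1^2$. Lemma~\ref{wwj via isoms} then identifies the common endpoint-endvector pair as $(w_1^2(o),(w_1^2)_*(\bv))$.

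There is no serious obstacle to this argument; the content of the result lies entirely in Proposition~\ref{same endvector}, and the present proposition is a clean corollary built on the tautology that any group element commutes with itself. The only care needed is in the bookkeeping: verifying that the concatenated jump times and burst vectors line up exactly as written in the statement, and that the word obtained from Definition~\ref{D: walk and jump axes} is genuinely the product $w_1\cdot w_2$ (respectively $w_2\cdot w_1$) after cancelling the identity syllables $g_0$ produced by jumps at time $0$ or at the very end of a walk.
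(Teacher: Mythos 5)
Your proposal is correct and takes essentially the same approach as the paper's proof: both build the two required walks by concatenating the pair $W_1,W_2$ from Proposition~\ref{same endvector} in opposite orders and observing that since $W_1$ and $W_2$ share endpoint and endvector, the composites must as well. The paper states this concatenation geometrically and briefly, while you make the underlying mechanism explicit by passing through Lemma~\ref{wwj via isoms} and Proposition~\ref{P: unit tangent bundle determines isometry} to conclude $w_1=w_2$ in $\PSLR$, so that $w_1w_2=w_2w_1=w_1^2$; this is the same content, phrased so that the equality of endpoint-endvector pairs is immediate from the group identity rather than from re-tracing the paths. Your bookkeeping of jump times, burst vectors, total jump count $2n+4$, and the absorption of the boundary $g$-syllables into a single $g$-syllable when concatenating is accurate.
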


\begin{proof}
    To construct the first and second walks here we use the corresponding walks from Proposition \ref{same endvector} as their initial segments, so that result supplies $T_2$, $T_1$, and the jump times $t_1,\hdots,t_n$. The initial segments thus have the same endpoint and endvector, after a duration of $T_2$ for the first and $2T_1$ for the second. For the terminal segment of the first walk here, we then use a copy of the \textit{second} walk supplied by Proposition \ref{same endvector}; and conversely, for the terminal segment of the second walk we use a copy of the previous Proposition's \textit{first} walk.
\end{proof}

\section{Consequences of negative curvature leading to distinct endpoints}\label{S: difft endpts}

In this section, a counterpoint to the previous one, we show that both the pattern and number of jumps can affect the endpoint of a walk with jumps. First, in Section \ref{subsec: no return}, we give a criterion for producing walks with identical initial data, duration, and number of jumps but different endpoints. We subsequently prove quasigeodesicity of walk-with-jumps paths, in Section \ref{subsec: qg}, a result that has as a particular consequence that having a different enough number of jumps leads to different endpoints.

\subsection{Future quadrants and points of no return}\label{subsec: no return}

\begin{lemma} \label{L: separation} For a walk with jumps with the data of Lemma \ref{wwj via isoms}, the distinct walk axes of Definition \ref{D: walk and jump axes} are 
pairwise disjoint. 
Furthermore if 
$j<i<k$, then $\gamma_j$ and $\gamma_k$ lie in distinct components of $\Htwo\setminus\gamma_i$. For each $i$, 
the shortest distance $d$ from $\gamma_i$ to $\gamma_{i+1}$ satisfies $\sinh(d/2) = \sin\theta\sinh(\ell/2)$. 

The distinct jump axes are also pairwise disjoint  
and satisfy the analogous separation property; and for $i>1$,  
the shortest distance $x_i$ from $\lambda_{i-1}$ to $\lambda_i$ satisfies $\sinh(x_i/2) = \sin\theta \sinh(s(t_i-t_{i-1})/2)$.\end{lemma}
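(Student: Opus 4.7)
My plan is to reduce the consecutive-pair claims to a single model calculation in the upper half-plane, then promote consecutive disjointness to the full separation via induction on the number of jumps.

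For the reduction: since $g_t$ preserves $\gamma_g$ setwise and $f$ preserves $\lambda_f$ setwise, applying the isometry $w_{2(i-1)}^{-1}$ and cancelling the $g$-syllable that acts trivially on $\gamma_g$ gives $d(\gamma_i, \gamma_{i+1}) = d(\gamma_g, f^{j_i}(\gamma_g))$; the analogous manipulation gives $d(\lambda_{i-1}, \lambda_i) = d(\lambda_f, g_{t_i - t_{i-1}}(\lambda_f))$. The two resulting model computations are formally identical after swapping $f$ and $(g_t)$ and their parameters, so I treat only the walk-axis case. I work in the upper half-plane with $o = i$ and $\lambda_f$ the positive imaginary axis, so $f(z) = e^\ell z$. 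A short computation shows $\gamma_g$ is the semicircle with real endpoints $-\cot(\theta/2)$ and $\tan(\theta/2)$, and $f^j(\gamma_g)$ has endpoints $-e^{j\ell}\cot(\theta/2)$ and $e^{j\ell}\tan(\theta/2)$. For $j\ge 1$ the latter pair strictly encloses the former, so the two geodesics are ultraparallel (hence disjoint). Feeding the four real numbers into the standard cross-ratio distance formula for nested ultraparallel geodesics, and simplifying using $\tan(\theta/2) + \cot(\theta/2) = 2/\sin\theta$ together with $(e^{j\ell}-1)^2/(4e^{j\ell}) = \sinh^2(j\ell/2)$, yields $\sinh(d/2) = \sin\theta\,\sinh(j\ell/2)$, which specializes to the lemma's formula at $j_i = 1$.

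For the non-consecutive separation I induct on $n$, with the inductive hypothesis that $\gamma_1, \ldots, \gamma_{n+1}$ are pairwise disjoint and for each $i$ the component of $\mathbb{H}^2 \setminus \gamma_i$ containing $q_i$ (call it the \emph{future} side) houses all $\gamma_k$ with $k > i$, while the other component houses all $\gamma_j$ with $j < i$. To extend to $\gamma_{n+2}$: the consecutive case supplies disjointness from $\gamma_{n+1}$, and $\gamma_{n+2}$ passes through $q_{n+1}$, so it lies entirely in the open future half-plane $H$ of $\gamma_{n+1}$. For each $i \le n$, the hypothesis places $\gamma_i$ on the past side of $\gamma_{n+1}$, so $H$ is disjoint from $\gamma_i$; since $H$ is connected and contains points arbitrarily close to $\gamma_{n+1}$ --- which itself lies in the future component of $\gamma_i$ by the hypothesis --- all of $H$ lies in the future component of $\gamma_i$, and hence so does $\gamma_{n+2}$. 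A symmetric connectedness argument places each earlier $\gamma_i$ on the past side of $\gamma_{n+2}$, closing the induction. The jump-axes portion follows by the identical argument with $f$ and $(g_t)$ interchanged.

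The main obstacle is the past/future sign bookkeeping, which requires unpacking the counterclockwise-$\theta$ convention in Definition~\ref{D: walkswithjumps} to verify three things: that $q_i$ sits consistently on a distinguishable future side of $\gamma_i$, that $p_{n+1}$ sits on the past side of $\gamma_{n+2}$ (needed to close the induction symmetrically), and that these conventions are compatible with the explicit coordinates used in the model computation. Once the signs are pinned down, the topological connectedness arguments above close the induction automatically, and both the consecutive and separation statements follow; I anticipate no additional geometric machinery is needed beyond the trigonometric identities already deployed in the model case.
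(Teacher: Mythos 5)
Your proof is correct and takes a genuinely different route from the paper's for the first two claims.

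For consecutive disjointness and the distance formula, the paper argues synthetically: it rules out an intersection of $\gamma_{i}$ and $\gamma_{i+1}$ by noting that, together with the arc of the jump axis $\lambda_i$ connecting them, they would form a triangle with two adjacent interior angles $\theta$ and $\pi-\theta$, whose sum alone already equals $\pi$; it then gets the distance formula by observing that the $\pi$-rotation about the midpoint of that $\lambda_i$-arc swaps the two walk axes, so the common perpendicular passes through the fixed point, and finishes with the hyperbolic law of sines. You instead reduce everything to the single model pair $\bigl(\gamma_g,\, f^{j}(\gamma_g)\bigr)$ with $\lambda_f$ vertical and $f(z)=e^{\ell}z$, read off that the boundary endpoints of $f^{j}(\gamma_g)$ strictly nest those of $\gamma_g$ (so the geodesics are ultraparallel), and obtain the distance from the endpoint cross-ratio. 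I verified the reduction (cancelling the $g$-syllable against $\gamma_g$-invariance is valid on both sides) and checked the resulting formula $\sinh(d/2)=\sin\theta\,\sinh(j\ell/2)$ against both sanity limits and a direct cross-ratio computation; it is correct. Your version is more computational but has the advantage of handling burst vectors for free (the paper's displayed formula with $\sinh(\ell/2)$ silently assumes $j_i=1$, and its Figure \ref{crossing axes} likewise shows a length-$\ell$ jump). One cosmetic point: with $\lambda_f$ the positive imaginary axis and the walk direction $\theta$ \emph{clockwise} of $\lambda_f$, the endpoints of $\gamma_g$ come out as $-\tan(\theta/2)$ and $\cot(\theta/2)$ rather than $-\cot(\theta/2)$ and $\tan(\theta/2)$; this is just an orientation choice and does not affect the distance.

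For the separation statement, your connectedness induction is essentially the paper's ``quick inductive argument.'' The one genuine obligation you flag — the ``past/future bookkeeping'' — boils down to a single local sign fact that the paper proves explicitly and that you will need as well: at each walk axis $\gamma_i$, the incoming jump segment (through $p_{i-1}$, on $\lambda_{i-1}$) and the outgoing jump segment (through $q_i$, on $\lambda_i$) lie on \emph{opposite} sides of $\gamma_i$, because the jump turn is $+\theta$ and the walk turn is $-\theta$. Once you have this, your induction closes: it furnishes both that $\gamma_{i+1}$ (through $q_i$) lies on the ``future'' side and that $\gamma_{i-1}$ (through $p_{i-1}$) lies on the ``past'' side, which is exactly what you need to initialize the connectedness argument in both directions. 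You correctly identify this as bookkeeping rather than a missing idea, but it is the one substantive point left to check, and it requires unwinding the $(+\theta,-\theta)$ convention in Definition~\ref{D: walkswithjumps} rather than the model coordinates alone. One small indexing caution: the jump axis $\lambda_i$ meets $\gamma_i$ at $p_i$ and $\gamma_{i+1}$ at $q_i$, so the relevant pair for defining the past side of $\gamma_i$ is $(\lambda_{i-1},\lambda_i)$, not $(\lambda_i,\lambda_{i+1})$; it is worth pinning this down to avoid the same off-by-one the paper's prose has when it says ``$\gamma_{i-1}$ and $\gamma_i$ both intersect $\lambda_i$.''
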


\begin{figure}[ht]
\begin{tikzpicture}

\draw (-3,0) -- (5,0);
\draw (-3,2) -- (5,2);
\draw (-1,-1) -- (3,3);
\draw [very thick, -stealth] (-2,0) -- (0,0) -- (2,2) -- (4,2);

\node [above] at (-2,0) {$\gamma_{i-1}$};
\node [above] at (-2,2) {$\gamma_{i}$};
\node [right] at (-.75,-.75) {$\lambda_{i}$};
\node [above right] at (0.25,0) {$\theta$};
\node [above right] at (2.25,2) {$\theta$};

\draw [thick, dashed] (1,2) -- (1,0);
\node [above left] at (0.5,0.5) {$\ell/2$};
\node [right] at (1,0.5) {$d/2$};

\draw (1,1.8) -- (1.2,1.8) -- (1.2,2);
\draw (0.8,0) -- (0.8,0.2) -- (1,0.2);

\end{tikzpicture}
\caption{Two walk axes $\gamma_{i-1}$ and $\gamma_i$, and a jump axis $\lambda_i$, with a segment of a walk-with-jumps path in bold. The shortest distance between walk axes is along the dashed arc.}\label{crossing axes}
\end{figure}


\begin{proof} We can take the $\gamma_i$ and $\lambda_j$ to be oriented ``in the direction of travel'' (i.e.,~in the direction that the relevant conjugate of the $g_t$ or $f$ translates). For a fixed $i>0$, $\gamma_{i-1}$ and $\gamma_i$ both intersect $\lambda_i$. At each intersection, the angle from its positive direction to that of $\lambda$ is equal to $\theta$. If $\gamma_i$ and $\gamma_{i-1}$ had a point of intersection they would thus form a triangle, together with $\lambda_i$, whose angles at the edge contained in $\lambda_i$ were $\theta$ and $\pi-\theta$. This would contradict that the angle sum of a hyperbolic triangle with at least one compact vertex is less than $\pi$.

The same argument also rules out $\gamma_i$ and $\gamma_{i-1}$ sharing an ideal endpoint, so they are at a nonzero distance from each other. The minimum distance $d$ is attained at endpoints of a unique geodesic arc that meets each of $\gamma_{i-1}$ and $\gamma_i$ at right angles.

To determine $d$, we first note that the $\pi$-rotation around the midpoint of the arc of $\lambda_i$ that joins $\gamma_{i-1}$ to $\gamma_i$ takes $\lambda_i$ to itself and exchanges $\gamma_{i-1}$ with $\gamma_i$. (This can be seen from the fact that it interchanges their tangent vectors at their points of intersection with $\lambda_i$, which determine the geodesics.) The geodesic arc joining $\gamma_{i-1}$ to $\gamma_i$ is therefore also preserved by this rotation, so it contains its fixed point as a bisector.

The formula for $d$ now follows from the hyperbolic law of sines, see Figure \ref{crossing axes}. The distance $x_i$ between $\lambda_{i-1}$ and $\lambda_i$ is established analogously, since these geodesics are joined by an arc of $\gamma_i$ of length $s(t_i-t_{i-1})$. (The main difference from the previous case being that this depends on $i$, since the time intervals between jumps can vary)

We finally note that by construction of the walk with jumps, the segment of $\lambda_{i}$ joining $\gamma_{i-1}$ to $\gamma_i$ is on the opposite side of $\gamma_i$ from the arc of $\lambda_{i+1}$ joining $\gamma_i$ to $\gamma_{i+1}$ (for $0<i<n$). From this, it follows that $\gamma_i$ separates $\gamma_{i-1}$ from $\gamma_{i+1}$. A quick inductive argument now ensures for all $j<i<k$ that $\gamma_j$ and $\gamma_k$ lie in opposite complementary components of $\gamma_i$.
\end{proof}

\begin{cor}\label{angle bound} For a walk with jumps with the data of Lemma \ref{wwj via isoms}, the parametrization of the associated walk-with-jumps path given in Definition \ref{D: walkswithjumpspath} defines an embedding of the parameter interval to $\mathbb{H}^2$. 

For each $i\ge 1$, any points $a$, $p_i$, and $b$ on the walk-with-jumps path such that the parameter value of $a$ (respectively, $b$) is less (resp.~greater) than that of $p_i$ determine a triangle in $\mathbb{H}^2$ whose interior angle at $p_i$ is at least $\pi-\theta$. The same angle bound likewise holds for an analogous triangle with vertices at $a$, $q_i$, and $b$.\end{cor}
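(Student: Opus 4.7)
The proof of both claims reduces to the separation properties from Lemma~\ref{L: separation}, combined with the convexity of closed half-planes in $\mathbb{H}^2$ bounded by geodesics. The unifying idea is that on either side of a break point, the walk-with-jumps path is forced into a specific half-plane bounded by each of the two geodesic axes through the break point. First I would check that the walk axes are all distinct: consecutive $\gamma_i, \gamma_{i+1}$ lie at positive distance by Lemma~\ref{L: separation}, and $\gamma_i = \gamma_k$ for non-consecutive $i<k$ would contradict separation by any intermediate $\gamma_j$; the same reasoning gives distinctness of the jump axes. Lemma~\ref{L: separation} then yields pairwise disjointness within each family, so distinct walk segments are mutually disjoint, and likewise distinct jump segments.

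For the \textbf{embedding}, the remaining case is an intersection between the $i$th walk segment $\overline{q_{i-1}p_i}\subset \gamma_i$ and the $j$th jump segment $\overline{p_jq_j}\subset \lambda_j$ with $j\notin\{i-1,i\}$. For $j\le i-2$: since $\gamma_i$ meets $\lambda_{i-1}$ only at $q_{i-1}$, the walk segment minus $\{q_{i-1}\}$ lies in one open half-plane of $\lambda_{i-1}$, and the walk direction at $q_{i-1}$ points toward $\lambda_i$, placing the segment in the $\lambda_i$-side. By separation of jump axes applied to $j<i-1<i$, $\lambda_j$ lies in the opposite half-plane of $\lambda_{i-1}$ from $\lambda_i$, so the walk segment is disjoint from $\lambda_j$. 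The case $j\ge i+1$ is symmetric via $\lambda_i$.

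For the \textbf{angle bound at $p_i$}, set up a local frame: let $\mathbf{u}_i$ be the jump direction along $\lambda_i$, at counterclockwise angle $\theta$ from the forward walk direction $\mathbf{v}_i$ along $\gamma_i$. Let $H^-_\gamma$ be the closed half-plane bounded by $\gamma_i$ not containing $\mathbf{u}_i$, $H^-_\lambda$ the closed half-plane bounded by $\lambda_i$ not containing $\mathbf{v}_i$, and $Q^-=H^-_\gamma\cap H^-_\lambda$; define $Q^+$ analogously using the opposite half-planes. At $p_i$ the tangent directions into $Q^-$ form the closed arc from $-\mathbf{v}_i$ to $-\mathbf{u}_i$ of length $\theta$, antipodal to the arc from $\mathbf{v}_i$ to $\mathbf{u}_i$ into $Q^+$, so any ray from $p_i$ into $Q^-$ and any ray from $p_i$ into $Q^+$ meet at angle at least $\pi-\theta$. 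The bound therefore follows once we show that the past path minus $\{p_i\}$ lies in $Q^-$ and the future path minus $\{p_i\}$ lies in $Q^+$. I would prove the past-in-$H^-_\gamma$ inclusion by backward induction: $\gamma_{i+1}\subset H^+_\gamma$ by construction, so $\gamma_{i-1}\subset H^-_\gamma$ by the separation property, and inductively every $\gamma_{i-k}$ ($k\ge 1$) lies in $H^-_\gamma$. Each past jump segment has both endpoints on past walk axes inside $H^-_\gamma$, so convexity of the half-plane places the jump segment in $H^-_\gamma$; combined with the $i$th walk segment on $\gamma_i=\partial H^-_\gamma$, this gives past-in-$H^-_\gamma$. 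The parallel argument with walk and jump data swapped gives past-in-$H^-_\lambda$, and the future case is symmetric. The angle bound at $q_i$ follows from the same framework, built from $\gamma_{i+1}$ and $\lambda_i$.

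The \textbf{main obstacle} is the inductive cascade between the two families: Lemma~\ref{L: separation} directly governs how walk axes separate each other (and likewise the jump axes), but is silent on the cross-family interaction. Placing a past walk segment inside $H^-_\lambda$ requires bootstrapping through the intermediate jump axes using their separation, cross-linked with the fact that each past walk segment has endpoints on two consecutive past jump axes; once this bookkeeping is in place, the remaining ingredients --- convexity of half-planes and the final angle computation --- are routine.
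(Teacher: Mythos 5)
Your proof is correct and follows essentially the same strategy as the paper's: use the separation of walk axes by walk axes and jump axes by jump axes (Lemma~\ref{L: separation}) to confine the pre-$p_i$ and post-$p_i$ portions of the path to two opposite quadrants at $p_i$, then read off the angle bound from the quadrant geometry. You are somewhat more explicit than the paper at two points: you isolate and handle the cross-family case (walk segment vs.\ jump segment) in the embedding argument by bootstrapping through an intermediate jump axis, and you carry out the backward induction establishing the half-plane containment---both of which the paper compresses into a single appeal to Lemma~\ref{L: separation} and only really spells out later in Corollary~\ref{quadrants}. That extra care is not a different approach; it fills in detail that the paper's terse proof leaves implicit.
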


\begin{proof} Since the parametrization given in Definition \ref{D: walkswithjumpspath} is by arclength on any sub-interval of the parameter interval that maps to a walk or jump segment, it is injective on such sub-intervals. Moreover, walk and jump segments that share an endpoint do not intersect outside this endpoint, since it is a general property of hyperbolic geodesics that they intersect in at most a single point. Therefore for parameter values $t$ and $t'\ne t$ mapping to the same point, we may assume that they do not belong to the same or adjacent sub-intervals as above. However then the walk or jump segments that they map to, which are not identical and do not share an endpoint, are separated by the walk or jump axis containing any segment that lies between these two on the walk-with-jumps path, by Lemma \ref{L: separation}.

We address the triangle $\triangle$ with vertices at $a$, $p_i$, and $b$; the other case is similar. By Lemma \ref{wwj via isoms}, $o$ lies on $\gamma_1$, $p_i$ on $\gamma_i\cap\lambda_i$, and $q_i$ on $\lambda_i\cap\gamma_{i+1}$. Thus by Lemma \ref{L: separation}, the edge of $\triangle$ joining $a$ to $p_i$ lies entirely in the half-plane bounded by $\gamma_{i}$ that does not contain $q_i$ and the half-plane bounded by $\lambda_i$ that does not contain $p_{i+1}$. These half-planes have angle of intersection $\theta$ at $p_i$, so the interior angle of $T$ here exceeds the complementary angle $\pi-\theta$, see Figure~\ref{F: angle bound}.\end{proof}

\begin{figure}[h]
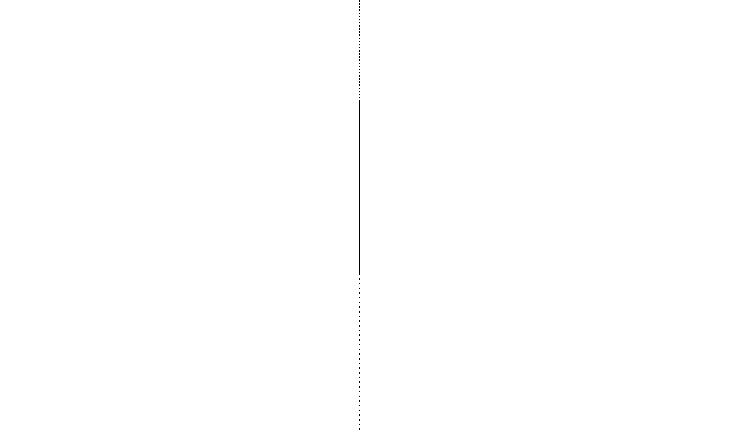
\caption{To accompany Corollary~\ref{angle bound}}
\label{F: angle bound}
\end{figure}

We begin by formally defining a notion that has appeared already in the proof of Lemma \ref{wwj via isoms}. 

\begin{definition}\label{D: initial segment} Suppose a walk with jumps is specified by initial data $o$, $\mathbf{v}$, $s$, $\theta$, $\ell$; duration $T$; jump time sequence $(t_1,\hdots,t_n)$; and burst vector $(j_1,\hdots,j_n)$. For any $T_0\le T$, the \textit{duration-$T_0$ initial segment} of the given walk with jumps is the one sharing the same initial data; with duration $T_0$; sequence of jump times $(t_1,\hdots,t_k)$, where $k\leq n$ is the largest index such that $t_k\leq T_0$; and burst vector $(j_1,\hdots,j_k)$ for the same $k$. Conversely, we say that the original walk with jumps is obtained by \textit{prolonging} any initial segment.
\end{definition}

\begin{remark} Let $w\in\PSLR$ be the word in syllable form with respect to some $f,g_t\in \PSLR$ associated to a walk with jumps by Lemma \ref{wwj via isoms}. For any initial segment of the given walk with jumps, the syllable form of its associated word $w_0\in\PSLR$ is a \mbox{\rm prefix} of $w$. Specifically, $w=w_0w_1$ for some $w_1$, where the syllable form of $w$ can be determined from the syllable forms of $w_0$ and $w_1$ by replacing the last syllable $g_{s_0}$ of $w_0$ and the first syllable $g_{s_1}$ of $w_1$ with a single syllable $g_{s_0+s_1}$.\end{remark}

The result below is another consequence of Lemma \ref{L: separation}. Given a bi-infinite geodesic $\gamma\subset\mathbb{H}^2$ such as one of the walk or jump axes of a walk with jumps, a \textit{half-plane bounded by $\gamma$} is the closure of one component of $\mathbb{H}^2-\gamma$. Its \textit{complementary} half-plane is the closure of the other component.

\begin{cor}\label{quadrants} Suppose a walk with jumps is specified by initial data $o$, $\mathbf{v}$, $s$, $\theta$, $\ell$; duration $T$; jump time sequence $(t_1,\hdots,t_n)$, $n\ge 1$; and burst vector $(j_1,\hdots,j_n)$. Let $\gamma_g$ and $\lambda_f$ be as in Lemma \ref{wwj via isoms first half}, and let $\gamma_i$, for $i\in\{1,\hdots,n+1\}$, and $\lambda_j$, for $j\in\{1,\hdots,n\}$, be the walk and jump axes from Definition \ref{D: walk and jump axes}. Then:\begin{itemize}
    \item Let $H_0^+$ be the half-plane bounded by $\gamma_g = \gamma_1$ that contains $\gamma_2$ and $K_0^+$ the half-plane bounded by $\lambda_f$ that contains $\lambda_1$. Then $H_0^+\cap K_0^+$ contains the entire associated walks-with-jumps path.
    \item For each $i\in\{1,\hdots,n\}$ let $H_i^-$ be the half-plane bounded by $\gamma_{i+1}$ that contains $o$ and $K_i^-$ the half-plane bounded by $\lambda_i$ that contains $o$. The entire walks-with-jumps path of the duration-$t_i$ initial segment is contained in $H_i^-\cap K_i^-$, and the remainder of the full walk-with-jumps path is contained in the intersection of the complementary half-planes $H_i^+\cap K_i^+$.
\end{itemize}
Moreover, taking $\lambda_{n+1}$ to be the geodesic through the endpoint of the walk with jumps at an angle of $\theta$ to $\gamma_{n+1}$, measured counterclockwise from the endvector, there is a half-plane $K_{n+1}^-$ bounded by $\lambda_{n+1}$ such that $H_n^-\cap K_{n+1}^-$ contains the entire walk-with-jumps path.
\end{cor}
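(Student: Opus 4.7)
My strategy is to first place each walk axis $\gamma_j$ and each jump axis $\lambda_j$ into one of the two half-planes bounded by each reference geodesic in the statement, using Lemma~\ref{L: separation}, and then to transfer these placements to the walk and jump segments that make up the path. For the second bullet, fixing $i\in\{1,\ldots,n\}$, the pairwise disjointness of walk axes forces each $\gamma_j$ with $j\ne i+1$ to lie in one of the two open half-planes of $\Htwo\setminus\gamma_{i+1}$. The linear separation property ($\gamma_j$ and $\gamma_k$ on opposite sides of $\gamma_{i+1}$ whenever $j<i+1<k$), anchored by $\gamma_1\ni o\in H_i^-$, yields by a short induction that $\gamma_j\subset H_i^-$ for $1\le j\le i$ and $\gamma_j\subset H_i^+$ for $i+2\le j\le n+1$.

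The jump-axis analog needs a similar anchor, namely the side of $\lambda_i$ containing $o$. I will supply this by adjoining $\lambda_f$ as a virtual ``zeroth jump axis'' $\lambda_0$. Here $\lambda_f$ and $\lambda_1$ are disjoint by the same triangle-angle-sum obstruction used in Lemma~\ref{L: separation}: both cross $\gamma_1$ at a counterclockwise angle of $\theta$ from $\bv$, so a putative intersection together with the subarc of $\gamma_1$ from $o$ to $p_1$ would produce a hyperbolic triangle with angle sum at least $\pi$. A parallel argument extends pairwise disjointness and linear separation to $\{\lambda_0,\lambda_1,\ldots,\lambda_n\}$, with the walk-segment arc of $\gamma_1$ from $o$ to $p_1$ playing the role that $\gamma_i$ plays between $\lambda_{i-1}$ and $\lambda_i$ in the original lemma. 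Since $o\in\lambda_0$, the anchor places $\lambda_0,\lambda_1,\ldots,\lambda_{i-1}$ on the $o$-side of $\lambda_i$, giving $\lambda_j\subset K_i^-$ for $j<i$ and $\lambda_j\subset K_i^+$ for $j>i$.

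Once the axis placements are known, each walk segment (contained in some $\gamma_j$) and each jump segment (contained in some $\lambda_j$) lies in the claimed closed half-plane automatically when the ambient axis does; in the ``crossing'' cases, where the ambient geodesic of a segment meets the reference boundary geodesic, both segment endpoints already lie in the correct closed half-plane (being on one of the previously placed axes of the other type), and because two hyperbolic geodesics meet in at most one point the segment cannot cross the boundary into the wrong half-plane. Summing over segments yields the two containments of the second bullet. The first and third bullets are handled by exactly the same logic, with $\lambda_f=\lambda_0$ and a virtual $\lambda_{n+1}$ (supplied by the statement) serving as the reference pre-initial and post-final jump axes. The main obstacle is precisely the jump-axis base case: legitimizing the virtual $\lambda_0$ (and $\lambda_{n+1}$) device by re-proving the triangle-impossibility argument in this slightly new guise and verifying that the separation relation propagates to the enlarged family. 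Once that is in hand, the rest is bookkeeping with Lemma~\ref{L: separation}.
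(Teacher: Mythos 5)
Your proposal is correct and rests on the same core machinery as the paper's proof (the axis-separation Lemma~\ref{L: separation}, followed by a transfer from axes to segments), but it makes one genuinely different choice of decomposition that is worth highlighting.

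The paper proves each half-plane containment by first placing only \emph{one} family of axes relative to the reference geodesic (walk axes relative to $\gamma_{i+1}$, jump axes relative to $\lambda_i$) and then sweeping up the \emph{other} type of segment by convexity: a jump segment's endpoints lie on consecutive walk axes that are already known to be in $H_i^-$, so the segment sits inside $H_i^-$ because half-planes are convex. For the $K$-side, the paper simply says ``an entirely analogous argument'' and ``a similar argument,'' and it handles $\lambda_{n+1}$ separately at the end by a case split on whether $t_n=T$. This leaves a small piece of bookkeeping unstated: Lemma~\ref{L: separation} gives the \emph{relative} linear order of the jump axes, but to know which side of $\lambda_i$ is the $o$-side (the ``anchor'') one needs a further fact relating $\lambda_f$ (or $\gamma_1\ni o$) to that order. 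You confront exactly this point head-on by adjoining $\lambda_0 = \lambda_f$ (and, dually, $\lambda_{n+1}$) to the jump-axis family, re-running the triangle-angle-sum disjointness argument with the $o$-to-$p_1$ arc of $\gamma_1$ playing the role that a walk segment plays in Lemma~\ref{L: separation}, and extending the linear separation to the enlarged family. That is a clean and honest way to supply the anchor, and it uniformizes the treatment of all three bullets. Two small notes: your ``a geodesic cannot cross the boundary twice'' argument for the crossing cases is a restatement of half-plane convexity (the route the paper takes), so the two are equivalent; and your disjointness argument for $\lambda_0,\lambda_1$ tacitly assumes $t_1>0$ so that $o\ne p_1$, the same degenerate edge case ($\lambda_f=\lambda_1$) that the statement itself does not address. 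Neither is a gap in substance; both match the paper's level of rigor.
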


\begin{proof} We claim that $H_0^+$ contains each axis $\gamma_i$ for $i\ge 1$. Of course $H_0^+$ contains its boundary $\gamma_1$, and the case $i=2$ holds by definition. Now for $i\ge 2$, supposing that $\gamma_j\subset H_0^+$ for each $j\le i$, we claim that $\gamma_{i+1}$ is also contained in $H_0^+$. This follows from Lemma \ref{L: separation}, which asserts that $\gamma_i$ separates $\gamma_{i+1}$ from $\gamma_{i-1}$, so since $\gamma_{i-1}$ and $\gamma_i$ lie in $H_0^+$, so does $\gamma_{i+1}$. The claim thus follows by induction. 

By Lemma \ref{wwj via isoms}, it now follows that $H_0^+$ contains each walk segment of the associated walk-with-jumps-path. And since each jump segment is a geodesic arc joining endpoints of two walk segments, and $H_0^+$ is convex, it also contains each jump segment. Therefore $H_0^+$ contains the entire associated walk-with-jumps path. An entirely analogous argument shows that $K_0^+$ does as well, and the first bulleted claim is proved.

We now prove the second claim.  We claim that if $j\le i+1$, then $\gamma_j$ is in $H_i^-$, the half plane bounded by $\gamma_{i+1}$ that contains $o$. 
Indeed, if $j=i+1$ the result is clear, so we assume $j<i+1$. In the special case, $j=0$, Lemma~\ref{L: separation} implies $\gamma_0$ is contained in a distinct half plane bounded by $\gamma_0$ and $\gamma_0$ contains $o$. 
For $0<j<i+1$, Lemma~\ref{L: separation} implies that $\gamma_j$ separates $o\in \gamma_0$ from $\gamma_j$. Therefore, the half plane bounded by $\gamma_{i+1}$ that contains $\gamma_j$ must be the one that contains $o$. 

If $x$ lies in the duration--$t_i$ in initial segment, then $x$ either lies on one of the $\gamma_j$ for $j<i+1$ or lies on a geodesic between $\gamma_{j_1},\gamma_{j_2}$ where $j_1,j_2\le i+1$. In the first case, we immediately have that $x$ lies in $H_i^-$, and in the second $x$ lies in $H_i^-$ by convexity of half planes. 
A similar argument shows that $x$ also lies in $K_i^-$. Then $x\in H_i^-\cap K_i^-$. 

On the other hand, if $y$ lies in the remainder of the full walk-with-jumps path, then $y$ lies on some $\gamma_j$ for $j\ge i+1$ or $y$ lies on a geodesic between $\gamma_{j_1},\gamma_{j_2}$ where $j_1,j_2\ge i+1$. Lemma~\ref{L: separation} implies that if $j> i+1$, then $\gamma_{i+1}$ separates $\gamma_j$ from $\gamma_0$ which contains $o$. Therefore, for $j\ge i+1$, $\gamma_j$ lies in $H_i^+$, the half plane bounded by $\gamma_{i+1}$ that does not contain $o$. Therefore, $y\in H_i^+$ by convexity of $H_i^+$. A similar argument shows $y$ also lies in $K_i^+$. Therefore, $y\in H_i^+\cap K_i^+$. 

We now address the Corollary's final claim. Note that each of $\lambda_{n+1}$ and $\lambda_n$ intersect $\gamma_{n+1}$ at an angle of $\theta$: the former by construction, and the latter as observed in Lemma \ref{L: separation}. They coincide if $t_n = T$; otherwise they do not intersect (also as observed in Lemma \ref{L: separation}). In the former case, take $K_{n+1}^- = K_n^-$. In the latter, let $K_{n+1}^-$ be the half-plane bounded by $\lambda_{n+1}$ and containing $\lambda_n$. Then $K_{n+1}^-$ contains the $(n+1)$st walk segment, since the endpoints of this segment lie on $\lambda_{n+1}$ and $\lambda_n$, and it contains the half-plane $K_n^-$ opposite this segment. Therefore by the previous claim, $K_{n+1}^-$ contains the entire walk-with-jumps path.
\end{proof}

\begin{definition}\label{future quad}
Motivated by the second bullet of Corollary \ref{quadrants}, for $H_n^+$, $\lambda_{n+1}$, and $K_{n+1}^-$ as defined there, we call $K_{n+1}^+$ the half-plane bounded by $\lambda_{n+1}$ opposite $K_{n+1}^-$ and deem the intersection $H_n^+\cap K_{n+1}^+$ of half-planes the \textit{future quadrant} of the walk with jumps specified there.\end{definition}

\subsection{Quasigeodesicity}\label{subsec: qg}
In this section we will show that the walk-with-jumps paths in $\mathbb{H}^2$ from Definition \ref{D: walkswithjumpspath} are ``quasi-geodesic''. 
This property is well known to have important consequences in the study of negatively curved spaces. A key consequence for our application, recorded in Corollary \ref{C: jump simplices} below, is that walks with different (enough) \textit{numbers} of jumps have different endpoints, irrespective of any other consideration. As in the previous subsection, our results here are \textit{effective}, meaning that we produce explicit constants.

\begin{definition}\label{D: qg}
For a fixed $\lambda\in(0,1]$ and $\epsilon \ge 0$, a \textit{$(\lambda,\epsilon)$-quasi-geodesic} in a metric space $(X,d)$ is a map $c: I\to X$, where $I\subset\mathbb{R}$ is an interval, such that for all $t,t'\in I$:
\[ \lambda|t-t'| - \epsilon \le d(c(t),c(t')) \le \frac{1}{\lambda} |t-t'| + \epsilon. \]
\end{definition}

This definition asserts that the path $c$ does not shrink or expand distance by more than a multiplicative factor of $\lambda$ and an additive factor of $\epsilon$. In applying the notion to a walk-with-jumps path we will leverage the fact that it is a broken geodesic. Our parametrization described in Definition \ref{D: walkswithjumpspath} is by arclength on each piece, so the right-hand inequality above will automatically hold for any $\lambda\in(0,1]$ and $\epsilon\ge 0$.

\begin{proposition}[Hyperbolic Law of Cosines, {\cite[I.2.7]{BridsonHaefliger}}]\label{HLC}
Let $\triangle$ be a hyperbolic triangle with vertices $A,B,C$. Let $a = d(B,C)$, $b=d(C,A),$ and $c=d(A,B)$. Let $\theta$ denote the vertex angle at $C$. Then
\[\cosh c =\cosh a\cosh b - \sinh a\sinh b \cos\theta\]
In particular, if $|\theta|= \frac\pi2$, then:
\[\cosh c = \cosh a\cosh b\]
\end{proposition}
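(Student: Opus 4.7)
The plan is to work in the hyperboloid model of $\Htwo$, where distances admit a clean algebraic formula via the Minkowski inner product and the law of cosines drops out of a single dot-product expansion. Specifically, view $\Htwo$ as the upper sheet of $\{x\in\mathbb{R}^{2,1}:\langle x,x\rangle=-1\}$ with $\langle x,y\rangle=-x_0y_0+x_1y_1+x_2y_2$; the key fact I will take as known is that for any two points $P,Q$ on the sheet, $\cosh d(P,Q)=-\langle P,Q\rangle$, and that the isometry group acts as the identity component of $O(2,1)$.

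First I would normalize using an isometry to place $C$ at the basepoint $e_0=(1,0,0)$, so that the tangent space $T_C\Htwo$ is the spacelike plane $\{x_0=0\}$ equipped with the restriction of $\langle\cdot,\cdot\rangle$, which is positive-definite there. Let $u,v\in T_C\Htwo$ be the unit tangent vectors at $C$ pointing along the geodesic segments $CA$ and $CB$ respectively; then $\langle u,u\rangle=\langle v,v\rangle=1$ and $\langle u,v\rangle=\cos\theta$ by definition of the angle $\theta$ at $C$. Next I would use the standard parametrization of geodesics in the hyperboloid model: the unit-speed geodesic from $C$ with initial vector $w$ is $t\mapsto\cosh(t)\,C+\sinh(t)\,w$. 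Applying this to the two sides meeting at $C$ yields
\begin{equation*}
A=\cosh(b)\,C+\sinh(b)\,u,\qquad B=\cosh(a)\,C+\sinh(a)\,v.
\end{equation*}

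The final step is to expand $\cosh c=-\langle A,B\rangle$ bilinearly. Since $\langle C,C\rangle=-1$ and $\langle C,u\rangle=\langle C,v\rangle=0$ (the tangent vectors are Minkowski-orthogonal to the position vector on the hyperboloid), the cross terms vanish and only two terms survive, giving
\begin{equation*}
\cosh c=\cosh a\cosh b-\sinh a\sinh b\,\langle u,v\rangle=\cosh a\cosh b-\sinh a\sinh b\cos\theta,
\end{equation*}
which is the desired identity. The right-angle special case is immediate by setting $\cos\theta=0$.

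I expect essentially no serious obstacle: the only nontrivial inputs are the three standard facts above (distance formula, geodesic parametrization, and orthogonality of tangent vectors to position vectors on the hyperboloid), each of which is a direct consequence of differentiating $\langle x(t),x(t)\rangle=-1$ or of the definition of the model. If one preferred not to invoke the hyperboloid model, an alternative route would be to work in the upper half-plane, normalize via $\PSLR$ so that $C=i$ and the side $CA$ lies on the imaginary axis, and compute $d(A,B)$ from the explicit formula $\cosh d(z,w)=1+\tfrac{|z-w|^2}{2\,\Im z\,\Im w}$; this is more computational but avoids Minkowski geometry.
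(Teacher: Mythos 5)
Your proof is correct. The paper itself supplies no proof of this proposition—it is quoted as a citation to Bridson--Haefliger I.2.7—so there is no internal argument to compare against. That said, the argument you give in the hyperboloid model (parametrize the two sides emanating from $C$ as $\cosh(\cdot)\,C + \sinh(\cdot)\,w$, expand $-\langle A,B\rangle$ bilinearly using $\langle C,C\rangle=-1$ and $\langle C,u\rangle=\langle C,v\rangle=0$) is in fact essentially the same derivation that Bridson--Haefliger use for I.2.7, so your proposal reproduces the proof behind the citation rather than offering a genuinely different route. The computation is clean, the three supporting facts you invoke are all standard and correctly stated, and the right-angle specialization is immediate.
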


\begin{lemma}\label{angle theta defect} For a triangle $\triangle\subset\mathbb{H}^2$ with vertices $A$, $B$, and $C$ and opposite side lengths $a$, $b$, and $c$ respectively, if $\triangle$ has interior angle $\theta$ at $C$ then
\[ a + b - c < \delta_{\theta} \doteq \ln\left( \frac{2}{1-\cos\theta}\right). \]
This bound is sharp, not attained, but asymptotically approached as $a = b\to\infty$.
\end{lemma}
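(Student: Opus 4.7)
The plan is to reduce everything to a single clean inequality by massaging the Hyperbolic Law of Cosines (Proposition~\ref{HLC}) via product-to-sum and half-angle identities. Substituting $\cosh a\cosh b=\tfrac12(\cosh(a+b)+\cosh(a-b))$ and $\sinh a\sinh b=\tfrac12(\cosh(a+b)-\cosh(a-b))$ into the law of cosines, subtracting $1$ from both sides, and then using $\cosh x-1=2\sinh^2(x/2)$ together with $1-\cos\theta=2\sin^2(\theta/2)$ and $1+\cos\theta=2\cos^2(\theta/2)$, I obtain the identity
\[
\sinh^2(c/2)=\sin^2(\theta/2)\sinh^2((a+b)/2)+\cos^2(\theta/2)\sinh^2((a-b)/2).
\]
Discarding the second term on the right gives the key inequality $\sinh(c/2)\ge \sin(\theta/2)\sinh((a+b)/2)$, with equality precisely when $a=b$.

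The main obstacle is that this non-strict $\sinh$-inequality is not quite enough on its own: the claim asserts asymptotic sharpness (attained in the limit $a=b\to\infty$), so I cannot afford any slack beyond what the decaying $\sinh$ tails already supply. I would promote the inequality to a strict exponential bound by writing $2\sinh x=e^x-e^{-x}$ and then using the ordinary (hyperbolic) triangle inequality $c<a+b$. Explicitly,
\[
e^{c/2}-e^{-c/2}\ \ge\ \sin(\theta/2)\bigl(e^{(a+b)/2}-e^{-(a+b)/2}\bigr),
\]
and since $c<a+b$ gives $e^{-c/2}>e^{-(a+b)/2}\ge \sin(\theta/2)\,e^{-(a+b)/2}$, rearranging produces the strict bound $e^{c/2}>\sin(\theta/2)e^{(a+b)/2}$. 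Taking logarithms and doubling then yields $c>a+b+2\ln\sin(\theta/2)$, i.e.~$a+b-c<\ln(1/\sin^2(\theta/2))=\ln(2/(1-\cos\theta))=\delta_\theta$.

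For the asymptotic claim I would specialize to $a=b$, in which case the identity above collapses to the equality $\sinh(c/2)=\sin(\theta/2)\sinh(a)$.  As $a\to\infty$ both sides are asymptotic to half-exponentials, so $c/2=\sinh^{-1}(\sin(\theta/2)\sinh(a))\sim a+\ln\sin(\theta/2)$, whence $a+b-c=2a-c\to -2\ln\sin(\theta/2)=\delta_\theta$ from below.  Non-attainment for any finite triangle is already guaranteed by the strict inequality proved in the preceding step.
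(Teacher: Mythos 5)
Your proof is correct, but it is a genuinely different argument from the paper's. The paper treats $a+b-c$ as a two-variable function $f_\theta(a,b)$ on the quadrant $\{a,b\ge 0\}$ and runs a calculus argument: it computes that $\nabla f_\theta$ is a positive multiple of $(\sinh^2 b,\sinh^2 a)$, deduces that the gradient points away from the origin and toward the diagonal $a=b$, concludes that the supremum of $f_\theta$ is $\lim_{x\to\infty}f_\theta(x,x)$, and evaluates that limit via $\cosh^{-1}y=\ln\bigl(y+\sqrt{y^2-1}\bigr)$. You instead derive the closed-form identity
\[
\sinh^2(c/2)=\sin^2(\theta/2)\,\sinh^2\!\bigl(\tfrac{a+b}{2}\bigr)+\cos^2(\theta/2)\,\sinh^2\!\bigl(\tfrac{a-b}{2}\bigr)
\]
from the law of cosines by product-to-sum and half-angle identities, drop the nonnegative second term, and then promote the resulting $\sinh$-inequality to the strict exponential bound $e^{c/2}>\sin(\theta/2)\,e^{(a+b)/2}$ by invoking the strict hyperbolic triangle inequality $c<a+b$ on the decaying tails. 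Your route is purely algebraic and avoids the gradient computation; it also produces the above identity as a reusable byproduct, and your treatment of the asymptotic case ($a=b\to\infty$, where the identity collapses to the equality $\sinh(c/2)=\sin(\theta/2)\sinh a$) is more transparent than the paper's exponentiate-and-take-limit calculation. The paper's monotonicity argument is a bit more structural — it shows at once where on any bounded square the maximum lives — but for the stated inequality both proofs deliver the same sharp, unattained bound. One small point worth making explicit: you use $c<a+b$ strictly, which holds precisely because $\theta<\pi$ and $a,b>0$ (i.e.\ the triangle is nondegenerate); that is implicit in the hypothesis and the paper's argument finesses it by working on the closed quadrant, so it is worth a sentence in a polished write-up.
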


\begin{remark} We fix $\theta = \pi/2$ to illustrate the strong contrast between $\mathbb{R}^2$ and $\mathbb{H}^2$. By the result above, for a right triangle in $\mathbb{H}^2$ with hypotenuse of length $c$, the difference $a+b-c$ is universally bounded above by $\ln 2$. On the other hand, for an isosceles right triangle in $\mathbb{R}^2$ with shorter side length $a=b$ the difference $a+b-c = (2-\sqrt{2})a$ increases linearly with $a$.\end{remark}

\begin{proof} Using the hyperbolic law of cosines from Proposition \ref{HLC}, we consider $a+b - c$ as a function $f_{\theta}(a,b)$ on the quadrant $\{a,b\geq 0\}$ in the $ab$-plane by substituting
\[ c = \cosh^{-1}(\cosh a\cosh b - \sinh a\sinh b \cos\theta). \]
A somewhat messy calculus computation gives that the gradient vector $\nabla f_{\theta}(a,b)$ is a positive scalar multiple of $(\sinh^2 B,\sinh^2 A)$. Thus at any $(a,b) \ne(0,0)$ (the global minimum of $f_{\theta}$), the gradient points away from the origin and toward the diagonal $(x,x)$. So for instance, for any fixed $X>0$, the maximum of $f_{\theta}$ on $[0,X]^2$ occurs at $(X,X)$. Moreover, $f_{\theta}(x,x)$ is an increasing function of $x$, and it follows that the values of $f_{\theta}$ on the entire quadrant are bounded above by $\delta_{\theta} = \lim_{x\to\infty} f_{\theta}(x,x)$.

To evaluate the limit, we exponentiate $f_{\theta}(x,x)$ and use the fact that $\cosh^{-1}y = \ln\left(y+\sqrt{y^2-1}\right)$ to write 
\[ e^{f_{\theta}(x,x)} = \frac{e^{2x}}{\cosh^2x - \sinh^2x\cos\theta + \sqrt{(\cosh^2x - \sinh^2x\cos\theta)^2 - 1}} \]
Multiplying top and bottom by $e^{-2x}$ then evaluating the limit and taking a natural log yields the formula for $\delta_{\theta}$ given above.
\end{proof}

\begin{proposition}\label{trig qg}
For a walk with jumps with initial data $o$, $\mathbf{v}$, $s$, $\ell$, $\theta$; of duration $T$ and with jump times $(t_1,\hdots,t_n)$ and burst vector $(j_1,\hdots,j_n)$, the distance $d$ from $o$ to the walk's endpoint satisfies
\[ sT + \ell N \ge d \ge sT + \ell N - 2n\delta_{\pi-\theta}, \]
for $\delta_{\theta}$ as in Lemma \ref{angle theta defect}, where $N = \sum_{i=1}^n j_i$ is the number of jumps. Moreover, supposing that $\delta_{\pi-\theta} < \ell/2$, define
\begin{equation} \lambda = \frac{\ell - 2\delta_{\pi-\theta}}{\ell}. \label{Eq: jump count defect}\end{equation}
Then the associated walk-with-jumps path is a $(\lambda,2\delta_{\pi-\theta})$-quasi-geodesic.
\end{proposition}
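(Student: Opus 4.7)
The plan is to derive both parts of the proposition from a single telescoping ``angle defect'' argument. The inputs are Corollary \ref{angle bound}, which bounds from below by $\pi - \theta$ the interior angle at each corner of the walk-with-jumps path (for any triangle with the other two vertices at earlier and later points), and Lemma \ref{angle theta defect}, which converts this angle bound into a bound of $\delta_{\pi-\theta}$ on the amount by which a single corner can shorten the direct distance between such earlier and later points.

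For the distance estimate: the upper bound $d \leq sT + \ell N$ is just the triangle inequality, since the walk-with-jumps path has total arclength $sT + \ell N$. For the lower bound, I would label the path's corners in parameter order as $v_1, \ldots, v_{2n}$, with $v_{2i-1} = p_i$ and $v_{2i} = q_i$, and set $v_0 = o$ and $v_{2n+1}$ to be the endpoint. For each $k \in \{1, \ldots, 2n\}$, Corollary \ref{angle bound} applied to the triangle with vertices $v_0, v_k, v_{k+1}$ gives interior angle at least $\pi - \theta$ at $v_k$. Since $\theta \mapsto \delta_{\theta}$ is decreasing on $(0, \pi)$, Lemma \ref{angle theta defect} then yields
\[ d(v_0, v_k) + d(v_k, v_{k+1}) - d(v_0, v_{k+1}) < \delta_{\pi-\theta}. \]
Summing over $k$ telescopes the left-hand side to $(sT + \ell N) - d$, which gives the lower bound.

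For the quasi-geodesic claim, the inequality $d(c(t), c(t')) \leq |t - t'|$ follows at once from the arclength parametrization. The same telescoping argument applied to the sub-path from $c(t)$ to $c(t')$ yields $d(c(t), c(t')) \geq |t - t'| - m \delta_{\pi-\theta}$, where $m$ is the number of original corners strictly interior to the sub-path. It remains to show that $m \leq 2|t - t'|/\ell + 2$; this holds because walk and jump segments alternate along the walk-with-jumps path, so among the $m + 1$ sub-segments into which the $m$ corners partition the sub-path, at least $\lfloor (m - 1)/2 \rfloor$ of the $m - 1$ \emph{interior} sub-segments are complete jump segments, each of length at least $\ell$. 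Combining these with the definition of $\lambda$ gives
\[ d(c(t), c(t')) \geq |t-t'|\bigl(1 - \tfrac{2\delta_{\pi-\theta}}{\ell}\bigr) - 2\delta_{\pi-\theta} = \lambda|t-t'| - 2\delta_{\pi-\theta}, \]
and the hypothesis $\delta_{\pi-\theta} < \ell/2$ is precisely what ensures $\lambda \in (0, 1]$.

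The conceptual content is already in place by virtue of Corollary \ref{angle bound} and Lemma \ref{angle theta defect}, so the main obstacle is purely combinatorial: correctly counting corners against arclength for arbitrary sub-paths, with care for the partial walk or jump segments that may lie at either end of the sub-path. The small edge cases---sub-paths entirely contained in a single walk or jump segment, or with $m \leq 1$---are immediate consequences of $\lambda \leq 1$ together with the trivial estimate $d(c(t), c(t')) \leq |t - t'|$.
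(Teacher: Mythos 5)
Your proof is correct and follows essentially the same route as the paper's: apply Corollary \ref{angle bound} together with the angle-defect bound of Lemma \ref{angle theta defect} to the triangles $(o,p_i,q_i)$ and $(o,q_i,p_{i+1})$ (in the paper's notation) and telescope, then for the quasi-geodesic bound repeat on an arbitrary sub-path and count complete interior jump segments to convert the corner count $m$ into a multiple of the arclength. The only cosmetic difference is that you track the minimum number of complete jump segments as $\lfloor (m-1)/2\rfloor$ while the paper uses the equivalent lower bound $\tfrac{k-2}{2}$, and your explicit handling of the $m\le 1$ edge cases is a small tightening that the paper leaves implicit.
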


\begin{remark} The length lower bound above implies that ``more bursty'' walks are closer to geodesic. To support this assertion, note that $n\leq N$ since $N$ is the total number of jumps whereas $n$ is the number of jump {\rm times}. Thus it is reasonable to take $N-n$ to measure the ``burstiness'' of the walk: for a fixed $N$, the growth of $N-n$ corresponds to the same number of jumps being concentrated at fewer times. As Proposition \ref{trig qg}'s lower bound decreases with $n$, it also increases with $N-n$.\end{remark}

\begin{proof} The upper bound on the distance between endpoints follows from the fact that the walk-with-jumps path of Definition \ref{D: walkswithjumpspath} is a broken geodesic joining $o$ to the walk's endpoint, of total length $sT + \ell\cdot\sum_i j_i$. This also implies that the distance between any two points on the path is bounded above by their distance \textit{along} the path, ie.~the difference between their parameter values if it is parametrized piecewise by arclength as in the Definition. This implies the upper bound needed for quasigeodesicity in Definition \ref{D: qg}.

To prove the Proposition's lower bound on the distance between endpoints we sequentially apply Lemma \ref{angle theta defect} and Corollary \ref{angle bound}, first to the triangle with vertices $o$, $p_i$, and $q_i$ then to the triangle with vertices $o$, $q_i$, and $p_{i+1}$, for $i$ increasing from $i$ to $n$. (For $i=n$ we interpret ``$p_{i+1}$'' to refer to the walk's endpoint.) From these we obtain inductively that for each $i$, the distance from $o$ to $q_i$ is at least $st_i + \ell\cdot\sum_{k=1}^i j_k - (2i-1)\delta_{\pi-\theta}$, and from $o$ to $p_{i+1}$, at least $st_{i+1}+ \ell\cdot\sum_{k=1}^i j_k - 2i\delta_{\pi-\theta}$
(where for $i = n$ we interpret ``$t_{i+1}$'' as $T$).

For arbitrary $x$ and $y$ on the walk-with-jumps path, an entirely analogous argument shows that the distance from $x$ to $y$ is at least $d_0 - k\delta_{\pi-\theta}$, where $d_0$ is the length of the sub-path bounded by $x$ and $y$ and $k$ is the number of $p_i$ and $q_i$ that lie within this sub-path. The sub-path bounded by $x$ and $y$ contains least $\frac{k-2}{2}$ jump segments, with the lower bound attained only if $x$ and $y$ each lie in separate jump segments. Thus using that $d_0 > \frac{k-2}{2}\ell$, we have
\[ d(x,y) \ge d_0 - \frac{k-2}{2}(2\delta_{\pi-\theta}) - 2\delta_{\pi-\theta} \ge d_0\left(1 - \frac{2\delta_{\pi-\theta}}{\ell}\right) - 2\delta_{\pi-\theta}. \]
The right-hand quantity in parentheses above is exactly $\lambda$ from (\ref{Eq: jump count defect}), so we have proved the needed lower bound for quasigeodesicity.\end{proof}

\begin{cor}\label{C: jump simplices} Let $o$, $\mathbf{v}$, $s$, $\ell$, $\theta$ be the initial data of a walk with jumps. Supposing that $\delta_{\pi-\theta} < \ell/2$, for $\delta$ as in Lemma \ref{angle theta defect}, define $\lambda$ as in (\ref{Eq: jump count defect}). Then for any natural numbers $M$ and $N$ with $M<\lambda N$, and any fixed $T>0$, no two walks with jumps that share the initial data above and duration $T$ have the same endpoint if the first has $M$ jumps and the second, $N$.\end{cor}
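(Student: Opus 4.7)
The plan is to derive the result from the length bounds already established in Proposition~\ref{trig qg}, bounding the endpoint-to-origin distance from below for the walk with $N$ jumps and from above for the walk with $M$ jumps, and showing these bounds are incompatible under the hypothesis $M < \lambda N$.

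First I would fix two walks with jumps sharing the stated initial data and duration $T$, with $M$ and $N$ jumps respectively, and assume for contradiction that their endpoints coincide at some $p\in\mathbb{H}^2$; let $d \doteq d(o,p)$. Let $n_N$ denote the number of distinct jump times of the $N$-jump walk, so $n_N \le N$ with equality when its burst vector is $(1,\hdots,1)$.

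Next I would invoke the lower bound half of Proposition~\ref{trig qg} applied to the $N$-jump walk. That result gives
\[ d \;\ge\; sT + \ell N - 2n_N\,\delta_{\pi-\theta} \;\ge\; sT + \ell N - 2N\,\delta_{\pi-\theta}, \]
where the second inequality uses $n_N \le N$ (so the worst case is no bursts). Simultaneously, the upper bound half of the same proposition applied to the $M$-jump walk, i.e.~the trivial fact that $d$ is at most the arclength of the associated walk-with-jumps path, gives $d \le sT + \ell M$.

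Chaining these two inequalities yields $sT + \ell N - 2N\,\delta_{\pi-\theta} \le sT + \ell M$, i.e.~$N(\ell - 2\delta_{\pi-\theta}) \le \ell M$. Dividing through by $\ell$ (positive, since $\delta_{\pi-\theta} < \ell/2$ ensures $\lambda = (\ell - 2\delta_{\pi-\theta})/\ell \in (0,1]$) gives $\lambda N \le M$, contradicting the hypothesis $M < \lambda N$. The only step requiring any real care is recognizing that the sharper lower bound $d \ge sT + \ell N - 2n_N\delta_{\pi-\theta}$ from Proposition~\ref{trig qg}---rather than the looser quasigeodesic estimate $d \ge \lambda(sT+\ell N) - 2\delta_{\pi-\theta}$---is needed, since the latter would leave residual $T$-dependence that the hypothesis does not control; but this sharper form is exactly the first conclusion of Proposition~\ref{trig qg}, so no further work is needed.
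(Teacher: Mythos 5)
Your proof is correct and takes essentially the same approach as the paper, which just gives a one-sentence sketch ("compare Proposition~\ref{trig qg}'s upper bound for the walk with $M$ jumps to the lower bound for the walk with $N$ jumps, using $M<\lambda N$ and $n\leq N$"). Your version fills in the algebra cleanly and makes the useful observation that the sharper endpoint-distance lower bound $d\ge sT+\ell N-2n\delta_{\pi-\theta}$ is the one needed here, since the coarser $(\lambda,\epsilon)$-quasigeodesic estimate would leave an uncontrolled $(1-\lambda)sT$ term.
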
 

The Corollary follows directly by comparing Proposition \ref{trig qg}'s upper bound for the walk with $M$ jumps to the lower bound for the walk with $N$ jumps, and applying the fact that $M<\lambda N$ and $n\leq N$ to show that the endpoint of the walk with $M$ jumps is closer to $o$ than the endpoint of the walk with $N$ jumps.

As $\ell\to\infty$ in \eqref{Eq: jump count defect}, $\lambda\to 1$. Therefore, if $\ell$ is chosen to be sufficiently long (relative to $\theta$), then Corollary~\ref{C: jump simplices} implies that walks with jumps paths with different numbers of jumps have distinct endpoints.

\section{A criterion for having separate endpoints}\label{subsec: sepcrit} In this section we prove Theorem \ref{sep criterion} from the introduction. This will require the following trigonometric lemma:

\begin{lemma}\label{mabel} Let $(o,\bv,s,\theta=\pi/2,\ell)$ be the initial data of a walk with jumps having $n$ jumps and burst vector $(1,\hdots,1)$, and number the vertices of the associated walks-with-jumps path $p_i$ and $q_i$ as in Definition \ref{D: walkswithjumpspath}. For any $i<n$ such that $t_{i+1}-t_i \ge R$, the angle $\eta_i$ measured at the jump endpoint $q_i$, from the walk axis emanating from $q_i$ to the geodesic arc that joins $q_i$ to the next jump endpoint $q_{i+1}$ satisfies:
\[ \sin\eta_i \le \frac{\sinh\ell}{\sqrt{\cosh^2 (sR)\cosh^2\ell - 1}}. \]
\end{lemma}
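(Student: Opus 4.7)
The plan is to identify the relevant right triangle in $\mathbb{H}^2$ and apply standard hyperbolic trigonometric identities. Consider the triangle with vertices $q_i$, $p_{i+1}$, and $q_{i+1}$. Its side from $q_i$ to $p_{i+1}$ is the $(i+1)$st walk segment of the walk-with-jumps path, which has length $s(t_{i+1}-t_i)$. Its side from $p_{i+1}$ to $q_{i+1}$ is the $(i+1)$st jump segment, of length $\ell$. Since the jump angle is $\theta = \pi/2$, by the definition of the walk with jumps the interior angle of this triangle at $p_{i+1}$ is exactly $\pi/2$, and its side from $q_i$ to $q_{i+1}$ is the hypotenuse. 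The angle $\eta_i$ at $q_i$ is opposite the jump side of length $\ell$.

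First I would invoke the hyperbolic Pythagorean theorem (the right-angle case of Proposition \ref{HLC}) to compute the hypotenuse length $c$:
\[ \cosh c = \cosh\ell \,\cosh(s(t_{i+1}-t_i)). \]
Next I would apply the hyperbolic law of sines, which for a right angle at $p_{i+1}$ specializes to
\[ \sin \eta_i = \frac{\sinh \ell}{\sinh c}. \]
Combining these and using $\sinh c = \sqrt{\cosh^2 c - 1}$ gives
\[ \sin\eta_i = \frac{\sinh\ell}{\sqrt{\cosh^2\ell\,\cosh^2(s(t_{i+1}-t_i)) - 1}}. \]

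Finally I would use monotonicity: since $\cosh$ is increasing on $[0,\infty)$ and $t_{i+1}-t_i \ge R$, we have $\cosh(s(t_{i+1}-t_i)) \ge \cosh(sR)$, and the right-hand side above is a decreasing function of this quantity. Substituting yields the stated bound. No step here looks difficult, so there is no substantive obstacle; the only mild care needed is to confirm that the angle $\eta_i$ in question is the interior angle of this triangle at $q_i$, which follows immediately from the description of $\eta_i$ in the statement of the lemma together with the fact that $p_{i+1}$ lies on the walk axis emanating from $q_i$ (by Lemma \ref{wwj via isoms}).
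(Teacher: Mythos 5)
Your proposal is correct and matches the paper's own proof essentially step for step: the paper uses the same right triangle (with vertices $q_i$, $p_{i+1}$, $q_{i+1}$), applies the hyperbolic Pythagorean theorem to get the hypotenuse length, the hyperbolic law of sines to express $\sin\eta_i$, and then monotonicity to pass from $t_{i+1}-t_i$ to $R$. Your explicit identification of the vertices and the justification that the angle at $p_{i+1}$ is a right angle makes the argument slightly more self-contained than the paper's, which largely defers to its Figure, but the substance is identical.
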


\begin{figure}[ht]
\begin{center}
\begingroup%
  \makeatletter%
  \providecommand\color[2][]{%
    \errmessage{(Inkscape) Color is used for the text in Inkscape, but the package 'color.sty' is not loaded}%
    \renewcommand\color[2][]{}%
  }%
  \providecommand\transparent[1]{%
    \errmessage{(Inkscape) Transparency is used (non-zero) for the text in Inkscape, but the package 'transparent.sty' is not loaded}%
    \renewcommand\transparent[1]{}%
  }%
  \providecommand\rotatebox[2]{#2}%
  \newcommand*\fsize{\dimexpr\f@size pt\relax}%
  \newcommand*\lineheight[1]{\fontsize{\fsize}{#1\fsize}\selectfont}%
  \ifx\svgwidth\undefined%
    \setlength{\unitlength}{287.61821362bp}%
    \ifx\svgscale\undefined%
      \relax%
    \else%
      \setlength{\unitlength}{\unitlength * \real{\svgscale}}%
    \fi%
  \else%
    \setlength{\unitlength}{\svgwidth}%
  \fi%
  \global\let\svgwidth\undefined%
  \global\let\svgscale\undefined%
  \makeatother%
  \begin{picture}(1,0.41606141)%
    \lineheight{1}%
    \setlength\tabcolsep{0pt}%
    \put(0,0){\includegraphics[width=\unitlength,page=1]{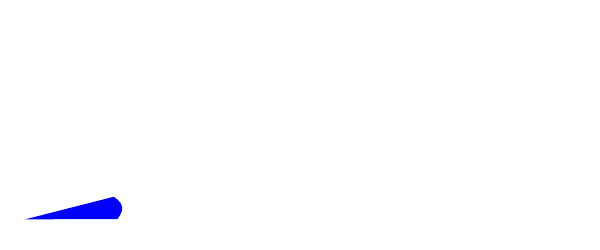}}%
    \put(0.21013028,0.06396668){\color[rgb]{0,0,1}\makebox(0,0)[lt]{\lineheight{1.25000012}\smash{\begin{tabular}[t]{l}$\eta_i$\end{tabular}}}}%
    \put(0.40636666,0.00612858){\color[rgb]{0,0,0}\makebox(0,0)[lt]{\lineheight{1.25000012}\smash{\begin{tabular}[t]{l}$X$\end{tabular}}}}%
    \put(0.76165804,0.19100411){\color[rgb]{0,0,0}\makebox(0,0)[lt]{\lineheight{1.25000012}\smash{\begin{tabular}[t]{l}$\ell$\end{tabular}}}}%
    \put(-0.00263139,0.04227735){\color[rgb]{0,0,0}\makebox(0,0)[lt]{\lineheight{1.25000012}\smash{\begin{tabular}[t]{l}$q_i$\end{tabular}}}}%
    \put(0.81020073,0.38930617){\color[rgb]{0,0,0}\makebox(0,0)[lt]{\lineheight{1.25000012}\smash{\begin{tabular}[t]{l}$q_{i+1}$\end{tabular}}}}%
    \put(0,0){\includegraphics[width=\unitlength,page=2]{righttriangle.pdf}}%
    \put(0.73790318,0.04021183){\color[rgb]{0,0,0}\makebox(0,0)[lt]{\lineheight{1.25000012}\smash{\begin{tabular}[t]{l}$p_{i+1}$\end{tabular}}}}%
    \put(0,0){\includegraphics[width=\unitlength,page=3]{righttriangle.pdf}}%
  \end{picture}%
\endgroup%

\end{center}
   \caption{A right triangle}
    \label{right tri}
\end{figure}

\begin{proof} The arcs in question emanating from $q_i$ form a right triangle, together with the $i+1$st jump axis, see Figure \ref{right tri}, whose base has length $X = s(t_{i+1}-t_i)$. The length $H$ of the triangle's hypotenuse satisfies $\cosh H = \cosh X\cosh\ell$, by the hyperbolic law of cosines, and by the hyperbolic law of sines the angle $\eta_i$ satisfies $\sin\eta_i = \sinh\ell/\sinh H$. Rewriting the first formula in terms of $\sinh H$ and substituting into the second yields the one given in the Lemma, but with equality and with $X$ replacing $sR$. Applying monotonicity of the resulting formula gives the result.
\end{proof}

We now recall the relevant definitions, also from the Introduction.

\begin{definition}
    \DistToResEps
\end{definition}

\begin{sep criterion thrm}\SepCriterion\end{sep criterion thrm}

We first prove a special case of Theorem \ref{sep criterion}, from which the general result will follow. It refers to the \emph{walk-with-jumps paths} from Definition \ref{D: walkswithjumpspath}.

\begin{lemma}\label{sep criterion case 0}
Let $R_{\min}>0$, the initial data $(o,\bv,s,\theta=\pi/2,\ell)$ of a walk with jumps, and a duration $T>0$ be given, and define $\epsilon$ as in Theorem \ref{sep criterion}. For a walk with jumps having the given initial data and duration, and minimum refractory length $R_{\min}$, which is distinct to resolution $\epsilon$ from a walk having its first jump at time $0$, the former walk's walk-with-jumps path does not intersect the latter walk's second walk axis (from Definition \ref{D: walk and jump axes}). In particular, the two walk-with-jumps paths intersect only at $o$.
\end{lemma}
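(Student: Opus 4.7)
The plan is to argue in three stages: first reduce to $t_1 > \epsilon$; then show walk $B$'s initial walk and jump segments avoid $\gamma_2^A$ via a Lambert-quadrilateral computation; and finally confine the remainder of walk $B$'s path to a region disjoint from $\gamma_2^A$ using Corollary~\ref{quadrants}, the refractory constraint, and Lemma~\ref{mabel}. Denote the two walks walk $A$ (with first jump at $s_1 = 0$) and walk $B$ (the former walk in the statement, with jump times $(t_1,\ldots,t_n)$ and minimum refractory length $R_{\min}$). The reduction is forced: if $t_1 = 0$, then walk $B$'s second walk axis would be $g_0 f(\gamma_g) = \gamma_2^A$, so walk $B$'s second walk segment would lie in $\gamma_2^A$, contradicting the conclusion. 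Hence $t_1 \ne s_1 = 0$, so by condition~(1) of distinctness to resolution $\epsilon$, $t_1 > \epsilon$.

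Walk $B$'s first walk segment lies on $\gamma_g$, which is ultraparallel to $\gamma_2^A = f(\gamma_g)$ via the common perpendicular $\lambda_f$ of length $\ell > 0$, and hence avoids $\gamma_2^A$. For the first jump segment, set $d \doteq d(p_1^B, \gamma_2^A)$ and consider the Lambert quadrilateral with vertices $o$, $p_1^B$, the foot of perpendicular from $p_1^B$ to $\gamma_2^A$, and $f(o)$. Its right angles lie at $o$, the perpendicular foot, and $f(o)$, with the acute angle at $p_1^B$. The Lambert trigonometric identity (as used in the proof of Proposition~\ref{distinct endpoints}) yields $\sinh d = \sinh\ell \cdot \cosh(st_1)$. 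Since $\cosh(st_1) > 1$, we conclude $d > \ell$, so walk $B$'s first jump segment (of length exactly $\ell$) cannot reach $\gamma_2^A$, and in particular $q_1^B$ lies on the $o$-side of $\gamma_2^A$.

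For the remainder of walk $B$'s path past $q_1^B$, Corollary~\ref{quadrants} (applied with $i = 1$) confines it to the future quadrant bounded by walk $B$'s second walk axis $\gamma_2^B = g_{t_1}(\gamma_2^A)$ and first jump axis $\lambda_1^B$. In Fermi coordinates $(b,a)$ based on $\gamma_g$---with metric $da^2 + \cosh^2 a \, db^2$---the isometry $g_{t_1}$ acts as $(b,a) \mapsto (b + st_1, a)$; writing $\gamma_2^A$ as the symmetric graph $a = a_A(b)$, with $a_A(0) = \ell$ and $a_A$ strictly increasing on $[0,\infty)$, we see $\gamma_2^B$ is the graph $a = a_A(b - st_1)$, and monotonicity of $a_A$ gives $a_A(b) > a_A(b - st_1)$ whenever $b > st_1$. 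Thus $\gamma_2^A$ lies strictly above $\gamma_2^B$ throughout $\{b > st_1\}$, so walk $B$'s second walk segment, traversed in the $+b$-direction along $\gamma_2^B$, stays strictly below $\gamma_2^A$. For the subsequent jump and walk segments I would proceed inductively: Lemma~\ref{mabel} bounds the angle $\eta_i$ at each post-jump location $q_i^B$ between walk $B$'s current walk axis and the geodesic shortcut to $q_{i+1}^B$ by $\sin\eta_i \le \sinh\ell/\sqrt{\cosh^2(sR_{\min})\cosh^2\ell - 1}$ (using $t_{i+1} - t_i \ge R_{\min}$), and together with the refractory lower bound on walk-segment lengths this controls how far walk $B$ can drift back toward $\gamma_2^A$ at each step. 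The explicit form of $\epsilon$ emerges by balancing the initial clearance from the previous paragraph against the maximum permissible cumulative drift; the main obstacle I expect is this quantitative bookkeeping, which extracts the precise expression $\cosh(s\epsilon) = (\cosh(sR_{\min}) + 1)/(\cosh(sR_{\min}) - \tanh^2\ell)$.
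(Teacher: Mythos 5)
Your outline is on the right track and you have identified the right ingredients (the Lambert quadrilateral, Corollary~\ref{quadrants}, Lemma~\ref{mabel}), but there is a genuine gap at the heart of the argument: the inductive control of the cumulative drift back toward $\gamma_2^A$ is acknowledged but not carried out, and that is precisely where the lemma's content lives.

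Two more specific concerns. First, your ``initial clearance'' $\sinh d = \sinh\ell\cosh(st_1) > \sinh\ell$ is correct but far too weak to feed an induction: it holds for every $t_1 > 0$ and never uses $\epsilon$ or $R_{\min}$. The formula for $\epsilon$ that you want to extract is in fact calibrated so that the \emph{second} jump endpoint of walk $B$ --- at time $t_1 + R_{\min}$ --- is still at distance at least $\ell$ from $\gamma_2^A$. The paper encodes this via the hexagon of Figure~\ref{dented hex}: one first computes the height $z$ of $q_1^B$ above the foot of $\lambda_1^B$ on $\gamma_2^A$ using $\tanh z = (\cosh(s\epsilon)-1)/(\coth\ell - \tanh\ell\cosh(s\epsilon))$, then applies a second Lambert estimate across the next walk segment of length at least $sR_{\min}$ to get $\tanh y \ge \cosh(sR_{\min})\tanh z$, and imposes $y \ge \ell$. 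Setting the inequality to equality and solving recovers the stated $\epsilon$. Your argument never passes through this two-step chain, so the dependence of $\epsilon$ on $R_{\min}$ is unmotivated in your write-up.

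Second, your Fermi-coordinate observation that $\gamma_2^A$ lies above $\gamma_2^B$ on $\{b > st_1\}$ is correct and shows that the second walk segment of $B$ avoids $\gamma_2^A$, but this is the easy part; the future quadrant $H_1^+\cap K_1^+$ of Corollary~\ref{quadrants} is not disjoint from $\gamma_2^A$, so confinement to it is necessary but nowhere near sufficient. Once $B$ jumps again it rises in the $a$-direction, and nothing in the Fermi picture as stated prevents it from crossing $\gamma_2^A$ later. The paper handles this with an angle-based induction rather than a height-based one: it shows that the angle $\xi_i$ at which the $i$th jump axis of $B$ meets $\gamma_2^A$ satisfies $\xi_i \le \xi_{i-1} \le \delta$ (via quadrilateral angle-sum), and that the shortcut angle $\eta_i$ from Lemma~\ref{mabel} satisfies $\eta_i \le \pi/2 - \delta$, so that the would-be triangle formed by $\gamma_2^A$, $\tau_i$, and $\lambda_i^B$ would have angle sum at least $\pi$, which is impossible in $\mathbb{H}^2$. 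This angle bookkeeping is local and tractable, whereas summing heights of non-straight curves in Fermi coordinates, which your plan implicitly requires, is substantially messier. If you wish to pursue your approach, you would need to (a) replace the first-jump clearance with the two-step clearance involving $R_{\min}$, and (b) actually produce the inductive estimate; as written, the proposal stops short of a proof.
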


\begin{figure}[ht]
\begin{center}
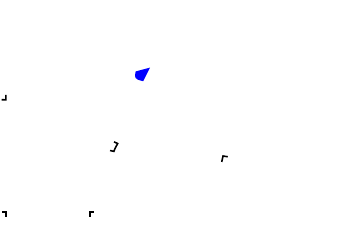
\caption{A non-convex hexagon.}
\label{dented hex}
\end{center}
\end{figure}

\begin{proof}
Consider two walks-with-jumps having minimum refractory length $R_{min}$, such that the first has a jump at time $0$ and the second has its first jump at time $\epsilon$ and its second at time $\epsilon+R_{min}$, the earliest possible. Then the initial segments of their walk-with-jumps paths belong to sides of the hexagon pictured in Figure \ref{dented hex}: those of the first belonging to the black sides, and those of the second, the red. Here $\epsilon_1 = s\epsilon$ and $R_1 = sR_{min}$. As in the Figure, we let $z$ be the length of the arc in the second walk's first jump axis joining the endpoint of its first jump segment to the first walk's walk axis. Hyperbolic trigonometry gives the following relations between the lengths of the sides of the resulting ``Lambert quadrilateral'' (one with three right angles) and the angle $\delta$:
\[ \tanh(\ell+z) = \cosh\epsilon_1 \tanh \ell,\quad\mbox{and}\quad \cos\delta = \sinh\ell\sinh\epsilon_1. \]
As in the proof of Proposition \ref{approx one jump} we note that if the product $\sinh\ell\sinh\epsilon_1$ were to exceed $1$, it would simply mean that the first walk's first walk axis and the second walk's first jump axis did not intersect. This would simultaneously imply that $\cosh\epsilon_1\tanh\ell > 1$, and hence that the left-hand equation above was also not satisfied.

However we aim to choose $\epsilon$ small enough that this does not occur; but large enough that the quantity $y$ of Figure \ref{dented hex} is at least $\ell$. To do this we apply angle-sum identities and simplify to solve the left-hand equation above for $z$:
\[ \tanh z = \frac{\cosh\epsilon_1-1}{\coth\ell - \tanh\ell\cosh\epsilon_1}.\]
We now observe that since the angle $\delta$ is less than $\pi/2$, there is a Lambert quadrilateral contained in the hexagon of Figure \ref{dented hex} that has its side of length $R_1$ as one side, a side intersecting that with length $z$, and another side properly contained in the one with length $y$. Applying the same trigonometric law to this quadrilateral, we obtain:
\[ \tanh y \ge \cosh R_1\tanh z = \frac{\cosh R_1(\cosh\epsilon_1-1)}{\coth\ell - \tanh\ell\cosh\epsilon_1}.\]
Thus to ensure that $y$ is at least $\ell$, it is enough to choose $\epsilon$ so that the right-hand side above is at least $\tanh\ell$. Setting it equal and solving for $\epsilon$ yields the value stated in the theorem.

We note that the right-hand lower bound for $y$ above is increasing in each of $\epsilon_1$ and $R_1$. This implies that if the second walk's first jump occurs at a time \textit{at least} the value of $\epsilon$ stated in the theorem, then its second jump will not cross the walk axis of the first walk with jumps, since this second jump occurs at a time at least $R_{min}$ after that of the first. We also note that the angle labeled $\gamma$ in Figure \ref{dented hex}, between the first walk's walk axis and the second walk's second jump axis, is less than the angle labeled $\delta$ in the figure. This is because the quadrilateral in the figure, with angles $\gamma$, $\pi-\delta$, and two right angles, has angle sum less than $2\pi$.

Let $q_i$ be the endpoint of the $i$th jump of the second walk, and let $\tau_i$ be the geodesic ray starting at $q_i$ and passing through $q_{i+1}$. 
Since the difference in jump times $t_{i+1}-t_i$ is at least  the minimum refractory length $R_{min}$, and $R_1 = sR_{min}$, Lemma \ref{mabel} asserts that the angle $\eta_i$ at $q_i$ made by $\tau_i$ and the walk axis emanating from $q_i$ satisfies:
\[ \sin\eta_i \le \frac{\sinh\ell}{\sqrt{\cosh^2 R_1\cosh^2\ell - 1}}. \]
We claim first that this bound implies that $\eta_i\le \pi/2-\delta$; or equivalently, comparing the bound with the formula for $\cos\delta$, that $\sinh\epsilon_1\sqrt{\cosh^2 R_1\cosh^2\ell - 1}\ge 1$. Substituting gives the left-hand side:
\begin{equation}\label{eq: trig expression bigger than 1} \sqrt{\left[\left(\frac{\cosh R_1+1}{\cosh R_1 - \tanh^2\ell}\right)^2 - 1\right]\left(\cosh^2 R_1\cosh^2\ell - 1\right)} \end{equation}
It is a straightforward computation to show that \eqref{eq: trig expression bigger than 1} exceeds $1$.

We now claim that as a consequence of this, the first and second walks with jumps do not cross.



Let $\rho$ be the walk axis of the first jump in Figure~\ref{dented hex} and let $\lambda_i$ be the $i$th jump axis of the second walk. We claim that if $\lambda_i$ intersects $\rho$, the intersection between $\lambda$ and $\rho$ in the quadrant bounded by $\lambda$ and $\rho$ containing $o$ is at an angle $\xi_i\le\delta$. 
Indeed, if $\lambda_i$ does not intersect $\rho$, then for all $j\ge i$, $\lambda_j$ does not intersect $\rho$ because $\lambda_i$ separates $\lambda_j$ from $\rho$, by Corollary~\ref{quadrants}.  
If $\lambda_{i-1}$ and $\lambda_i$ intersect $\rho$, then using the walk axis between $\lambda_{i-1},\lambda_i$, there is a quadrilateral whose angles are $\pi-\xi_{i-1},\,\frac\pi2,\,\frac\pi2$ and $\xi_i$ since the angle sum must be less than $2\pi$, we have $\xi_i\le \xi_{i-1}$. Since $\xi_1=\delta$, $\xi_i\le \delta$ by induction. 

\begin{figure}[ht]
    \centering
    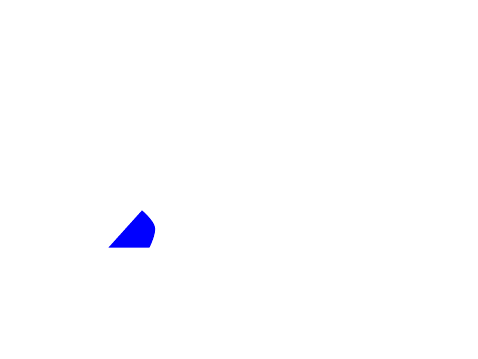
    \caption{The setup for showing that the first and second walk do not intersect. }
    \label{F: no future intersection}
\end{figure}

Aside from its first jump, the first walk travels entirely above $\rho$, see Corollary~\ref{quadrants} for details. Therefore, if the first and second walks with jumps cross, the second walk must cross $\rho$.  
Since $\eta_i$ is acute, if $q_i$ sits below $\rho$, the ray following the walk axis at $q_i$ heading in the direction of travel cannot cross $\rho$. Thus if the first and second walks with jumps cross, then there exist a pair $q_i,q_{i+1}$ that lie on opposite sides of $\rho$ since the second walk must intersect $\rho$ in a jump. Then $\tau_i$ crosses $\rho$. However we observe that if $\tau_i$ and $\rho$ cross, then there would be a triangle formed by $\rho$, $\tau_i$ and $\lambda_i$ where two of the interior angles are $\frac\pi2-\eta_i$, $\pi-\xi_i$. 
We have $\frac\pi2-\eta_i\ge \frac\pi2 - (\frac\pi2-\delta) =  \delta$, and $\pi-\xi_i\ge \pi-\delta$, so the interior angle sum of this triangle is at least $\pi-\delta+\delta = \pi$ so the third vertex of this triangle at the intersection of $tau_i$ and $\rho$ must be ideal. 
Therefore $\rho,\tau_i$ do not cross, so the first and second walks with jumps cannot intersect except at $o$.
\end{proof}

Theorem \ref{sep criterion} will now follow directly from the more precise result below.

\begin{theorem}\label{precise sep criterion}
Suppose $(s_1,\hdots,s_m)$ and $(t_1,\hdots,t_n)$ are the jump time sequences of two walks with jumps satisfying the hypotheses of Theorem \ref{sep criterion}. Define 
\[k_0 = \max\left(\{1\}\cup\{k\,|\,s_i = t_i\ \mbox{for all}\ i<k\}\right), \]
The intersection of the corresponding walk-with-jumps paths is the union of their $i^{\rm th}$ walk and jump segments, for all $i< k_0$, together with the shorter of the two $k_0^{\rm th}$ walk segments.
\end{theorem}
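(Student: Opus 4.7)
The plan is to split each walk into a shared initial portion and a ``remainder'' beyond the point of first divergence, then apply Lemma \ref{sep criterion case 0} to those remainders. Since the two walks share initial data and $s_i = t_i$ for all $i < k_0$, a direct induction on the construction in Definition \ref{D: walkswithjumpspath} (or equivalently via Lemma \ref{wwj via isoms}) shows that their first $k_0-1$ walk and jump segments coincide. Both walks then proceed from a common point with a common tangent vector along a common $k_0^{\rm th}$ walk axis $\gamma_{k_0}$. Relabel the two walks so that $s_{k_0} \le t_{k_0}$, with the convention that $t_{k_0} = \infty$ if $n < k_0$; by the maximality of $k_0$, the inequality is strict unless $t_{k_0}$ does not exist. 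Let $o' = p_{k_0}$ be the endpoint of walk 1's $k_0^{\rm th}$ walk segment (which is then the shorter of the two $k_0^{\rm th}$ walk segments), and let $\bv'$ be the tangent to $\gamma_{k_0}$ at $o'$.

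Next, I would apply Lemma \ref{sep criterion case 0} to the two ``remainder walks'' sharing initial data $(o', \bv', s, \pi/2, \ell)$ and duration $T - s_{k_0}$: walk 1's remainder has jump times $(0,\, s_{k_0+1} - s_{k_0},\, \ldots,\, s_m - s_{k_0})$, so its first jump occurs at time $0$, while walk 2's remainder has jump times $(t_{k_0} - s_{k_0},\, \ldots,\, t_n - s_{k_0})$ (possibly empty). These remainders inherit the minimum refractory length $R_{\min}$ and are distinct to resolution $\epsilon$: either $n < k_0$ and their jump counts differ outright, or $t_{k_0} - s_{k_0} > \epsilon$ follows immediately from the distinct-to-resolution hypothesis on the originals. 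Lemma \ref{sep criterion case 0} then says that the two remainder walk-with-jumps paths intersect only at $o'$. Translated back to the originals, this asserts that the post-$o'$ portions of the two full paths meet only at $o'$.

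To finish, I would rule out ``cross'' intersections---walk 1's post-$o'$ portion meeting walk 2's pre-$o'$ portion, or vice versa. Since the two walks share the entire pre-$o'$ portion, any such intersection would produce a point traversed twice by the same full walk-with-jumps path, contradicting the embeddedness asserted in Corollary \ref{angle bound}. The intersection is therefore exactly the first $k_0-1$ walk and jump segments together with the shared $k_0^{\rm th}$ walk segment from $q_{k_0-1}$ (or $o$ if $k_0=1$) to $o'$, as claimed. The main obstacle I anticipate is the degenerate corner case $s_{k_0} = T$, which forces $n < k_0$ so that the remainder duration vanishes and Lemma \ref{sep criterion case 0} does not literally apply; here one argues directly that walk 1's terminal $k_0^{\rm th}$ jump segment lies on the jump axis $\lambda_{k_0}$, which by Lemma \ref{L: separation} meets $\gamma_{k_0}$ only at $o'$, so it cannot intersect walk 2's final walk segment except at $o'$.
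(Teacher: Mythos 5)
Your proposal is correct, and your reduction-to-Lemma-\ref{sep criterion case 0} strategy is the same as the paper's, but the key mechanism you use to glue the pieces together is genuinely different. The paper's proof shows that the remainders of the two paths beyond $o'$ are confined to a ``future quadrant'' $H_{k_0-1}^+\cap K_{k_0}^+$, appealing to Corollary \ref{quadrants} and a fairly detailed analysis of how the various half-planes $H^\pm$, $K^\pm$ relate to the two walks' segments; this walls off the remainders from the common segments and reduces all potential intersections to the post-$o'$/post-$o'$ case. You instead exploit the embeddedness of each walk-with-jumps path from Corollary \ref{angle bound}: since both walks agree up to $o'$, any ``cross'' intersection between one walk's post-$o'$ portion and the other's pre-$o'$ portion is automatically a self-intersection of a single walk-with-jumps path, which is forbidden. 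This is a neat shortcut that dispenses with the half-plane bookkeeping entirely, at the small cost of needing to note (as you do) that the degenerate case $s_{k_0}=T$ (zero remainder duration) must be treated separately since Lemma \ref{sep criterion case 0} does not literally apply there. One minor nit: the fact that the jump axis $\lambda_{k_0}$ meets $\gamma_{k_0}$ only at $o'$ in that degenerate case is just the elementary fact that two distinct geodesics in $\Htwo$ intersect at most once, rather than a consequence of Lemma \ref{L: separation} (which concerns disjointness of distinct axes of the same type); the fact is right, only the citation is slightly off.
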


\begin{proof} If (say) $s_1 = 0$ and $t_1>0$ then $k_0$ defined as above equals $1$, the first walk-with-jumps path has a degenerate first walk segment consisting only of the origin $o$, and the Theorem's conclusion specializes to that of Lemma \ref{sep criterion case 0}. Our goal is to reduce to this case.

Let us assume now that either $m>n$ and $s_i = t_i$ for all $i\le n$, whence $k_0 = n+1$, or that $0<s_{k_0}<t_{k_0}$. Then for the two walk-with-jumps paths, constructed as in Definition \ref{D: walkswithjumpspath}, the points $p_i$ and $q_i$ coincide for all $i<k_0$. Moreover, the $k_0^{\rm th}$ walk segment of the walk with jump times $s_i$ is a subsegment of the other walk's $k_0^{\rm th}$ walk segment. Let $\gamma_{k_0}$ and $\lambda_{k_0}$ be the $k_0^{\rm th}$ walk and jump axes, respectively, of the walk with jump times $s_i$, defined as in \ref{D: walk and jump axes}. By Lemma \ref{wwj via isoms}, these are characterized by containing this walk's respective walk and jump \emph{segments}. We claim that for the half-planes $H_{k_0-1}^-$ and $K_{k_0}^{-}$ respectively bounded by these axes, identified in Corollary \ref{quadrants}, $H_{k_0-1}^-\cap K_{k_0}^{-}$ contains the union of the segments that were identified above as common to the two walk-with-jumps paths, and the intersection of complementary half-spaces $H_{k_0-1}^+\cap K_{k_0}^+$ contains the remainder of each of the two walk-with-jumps paths.

Corollary \ref{quadrants} asserts in particular that $H_{k_0-1}^-$ contains the duration-$s_{k_0-1}$ initial segment, and $K_{k_0-1}^-$ the duration-$s_{k_0}$ initial segment, of the walk-with-jumps path corresponding to the walk with jump times $s_i$. We note that $H_{k_0-1}^-$ also contains this walk's $k_0^{\rm th}$ walk segment, since this segment lies in the bounding geodesic $\gamma_{k_0}$. Therefore it contains all of the common segments identified above, and the duration-$k_0$ initial segment does as well.

The remainder of the walk-with-jumps path corresponding to the walk with jump times $s_i$, outside the union of its segments in common with the other walk-with-jumps path, consists of the union of its $k_0^{\rm th}$ jump segment with the complement (in the path) of the duration-$s_{k_0}$ initial segment. It is contained in the complement (in the path) of the duration-$s_{k_0-1}$ initial segment, so by Corollary \ref{quadrants}, it is contained in $H_{k_0-1}^+$. The Corollary moreover asserts that $K_{k_0}^+$ contains the remainder of the path beyond the duration-$s_{k_0}$ initial segment. Since the $k_0^{\rm th}$ jump segment is contained in the bounding geodesic $\lambda_{k_0}$ of $K_{k_0}^+$, it too contains the entire remainder of the path outside the union of its common segments with the other path.

Turning attention to the walk-with-jumps path corresponding to the walk with jump times $t_i$, we note that the half-planes $H_{k_0-1}^{\pm}$ defined for the first path play the same role for this path as for that one: $H_{k_0-1}^-$ contains the union of its duration-$t_{k_0-1}$ initial segment with its $k_0^{\rm th}$ walk segment, and $H_{k_0-1}^+$ contains the complement (in the path) of the duration-$t_{k_0-1}$ initial segment. It follows that $H_{k_0-1}^-$ contains the union of segments that the first path has in common with this one, and that $H_{k_0-1}^+$ contains the complement in this path of that union of segments. 

The half-planes $K_{k_0}^{\pm}$ defined for the first path do not play the same role for this path as for that one; however, their bounding geodesic $\lambda_{k_0}$ intersects this walk's $k_0^{\rm th}$ walk segment at an angle of $\theta$ in its interior, between its points of intersection with this walk's $k_0-1^{\rm st}$ and $k_0^{\rm th}$ jump axes---its endpoints. Arguing as in Lemma \ref{L: separation} we therefore find that $\gamma_{k_0}$ is disjoint from each of these jump axes, so by Corollary \ref{quadrants} $K_{k_0}^-$ contains the duration-$t_{k_0-1}$ initial segment and $K_{k_0}^+$ contains the complement (in the path) of the duration-$t_{k_0}$ initial segment. Moreover, $\gamma_{k_0}$ divides the first walk's $k_0^{\rm th}$ walk segment from its complement in the second walk's $k_0^{\rm th}$ walk segment. Therefore $K_{k_0}^+$ contains the complement in the second walk's walk-with-jumps path of the union of common segments. This establishes the claim's second half.

The claim implies that if the first walk's walk-with-jumps path has a point of intersection with that of the second outside the union of common segments, then this point of intersection must belong to the complement of the union of common segments in the second walk-with-jumps path as well. This is because these complementary sub-paths are walled off from the union of common segments within the quadrant $H_{k_0-1}^+\cap K_{k_0}^+$. We now observe that these complementary sub-paths are themselves walk-with-jumps paths of two related walks with jumps, to which we will apply Lemma \ref{sep criterion case 0} to finish the proof.

Let $p_{k_0}$ be as in Definition \ref{D: walkswithjumpspath} for the first walk's walk-with-jumps path, ie.~the terminal point of its $k_0^{\rm th}$ walk segment, and let $\mathbf{w}$ be the outward-pointing tangent vector to the $k_0^{\rm th}$ walk path at $p_{k_0}$. Then $p_{k_0}$ may be taken as the common origin, and $\mathbf{w}$ the common initial vector, for two walks with jumps that otherwise share the given walks' initial data, each have duration $T-s_{k_0}$, and whose jump times are $(s'_{k_0}=0,s'_{k_0+1},\hdots,s'_m)$ and $(t'_{k_0},\hdots,t'_{n})$, respectively, where $s'_j = s_j - s_{k_0}$ for $k_0\le j\le m$ and similarly for $t'_j$. The walk-with-jumps paths associated to these two walks are exactly the portions of the original walks' walk-with-jumps paths complementary to the union of their duration-$T_0$ initial segments, for all $T_0<s_{k_0}$. The two new walks satisfy all hypotheses of Lemma \ref{sep criterion case 0}, so that result's conclusion implies that their walk-with-jumps paths intersect only at the origin $p_{k_0}$. The current result follows.
\end{proof}

We finally return to the setting of Example \ref{E: binary tree}, using Theorem \ref{precise sep criterion} to embed a binary tree in $\mathbb{H}^2$ so that paths in the tree are taken to walk-with-jumps paths.

\begin{cor}\label{the ned}
For a given $R_{\min}>0$ and the initial data $(o,\bv,s,\theta=\pi/2,\ell)$ of a walk with jumps, define $\epsilon = \epsilon(R_{\min},s,\ell)$ as in Theorem \ref{sep criterion}, and let $m = \max\{R_{\min},\epsilon\}$. For a given $n\in\mathbb{N}$ and a binary tree $\mathcal{T}$ with root vertex $p_0$, there is an embedding of a finite subtree of $\mathcal{T}$ to $\mathbb{H}^2$, taking $p_0$ to $o$, with the property that the path in $\mathcal{T}$ encoded by any binary sequence $w = w_1\hdots w_n$ as in Example \ref{E: binary tree} (with each $w_i\in\{0,1\}$) maps to the walk-with-jumps path of a walk with jumps having the given initial data and duration $T\doteq n\cdot m$, with a jump at time $t$ if and only if $t=(i-1)m$ for some $i$ such that $w_i=1$.
\end{cor}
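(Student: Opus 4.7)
The plan is to embed the complete binary subtree of $\mathcal{T}$ of depth $n$ as a union of walk-with-jumps paths indexed by leaves. For each binary string $w=w_1\cdots w_k$ of length $k\le n$, let $\phi(w)\in\Htwo$ be the endpoint of the walk-with-jumps path of the walk having the prescribed initial data, duration $km$, and a jump at time $(i-1)m$ for each $i\le k$ with $w_i=1$. I would map the vertex of $\mathcal{T}$ encoded by $w$ to $\phi(w)$ (so $p_0$ maps to $o$), and the edge from a depth-$(k-1)$ vertex $v$ to its $w_k$-labeled child $v'$ to the terminal sub-path of the walk-with-jumps path associated to $v'$: a jump segment of length $\ell$ followed by a walk segment of length $sm$ if $w_k=1$, or a walk segment of length $sm$ alone if $w_k=0$. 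By construction, the tree path encoded by any length-$n$ sequence $w$ maps exactly to the walk-with-jumps path of the corresponding walk of duration $nm$.

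Next I would verify the hypotheses of Theorem~\ref{precise sep criterion} for any two walks arising from distinct length-$n$ binary strings. All jumps in such a walk occur at distinct multiples of $m$; since $m\ge R_{\min}$ the minimum refractory condition holds, and since any two distinct multiples of $m$ differ by at least $m\ge\epsilon$, two distinct such walks are distinct to resolution $\epsilon$.

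With these hypotheses in hand, the main content is showing the map is an embedding. For two distinct vertices $v,v'$ of the subtree: if one is an ancestor of the other (say $v$ of $v'$), then $\phi(v)$ and $\phi(v')$ lie at distinct arclength parameters on the common walk-with-jumps path of $v'$, hence are distinct by the embeddedness assertion of Corollary~\ref{angle bound}. Otherwise, let $v_0$ be their least common ancestor and fix leaves $l,l'$ of the subtree descended from $v,v'$ respectively. The walks for $l$ and $l'$ agree through $v_0$ and diverge at the next edge, so by Theorem~\ref{precise sep criterion} their walk-with-jumps paths intersect precisely in the common initial sub-path ending at $\phi(v_0)$; since both $\phi(v)$ and $\phi(v')$ lie strictly past this intersection on their respective paths, $\phi(v)\neq\phi(v')$. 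A parallel argument on interior parameter values shows that distinct edges have disjoint interiors, meeting only at shared endpoint images when they share a vertex.

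The main obstacle, and the key bookkeeping task, is aligning the indexing conventions: the tree is indexed by depth and child-label, while Definition~\ref{D: walkswithjumpspath} and Theorem~\ref{precise sep criterion} index by jump-time position. Specifically, the index $k_0$ of Theorem~\ref{precise sep criterion} applied to two diverging walks is the first position in the jump-time sequence at which they differ, not the tree depth of divergence. One must verify that the arclength of the common initial sub-path furnished by the theorem equals the tree arclength from $p_0$ to $v_0$: both equal $s\cdot d\cdot m+j_0\ell$, where $d$ is the depth of $v_0$ and $j_0$ is the number of $1$'s among the common prefix's first $d$ entries. With this identification, the geometric separation of Theorem~\ref{precise sep criterion} translates directly into the disjointness of the two subtrees of $v_0$ in $\Htwo$, completing the argument.
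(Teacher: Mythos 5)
Your approach is essentially the same as the paper's: you define the map by sending each binary sequence $w$ to the walk-with-jumps path of the walk with jump times $\{(i-1)m : w_i = 1\}$, verify the hypotheses of Theorem~\ref{precise sep criterion} via $m \ge R_{\min}$ (refractory length) and $m \ge \epsilon$ (distinct to resolution $\epsilon$), and conclude embeddedness from that theorem. The small presentational differences — you organize the injectivity check by ancestor versus non-ancestor cases and explicitly invoke Corollary~\ref{angle bound} for the ancestor case, whereas the paper foregrounds well-definedness of the map across different leaf-paths before citing the theorem — amount to different bookkeeping of the same argument, and your ``distinct to resolution $\epsilon$'' verification (which slightly elides the strict inequality $>\epsilon$ when $m=\epsilon$) matches the paper's own phrasing.
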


\begin{proof} We recall from Example \ref{E: binary tree} that $w=w_1\hdots w_n$ encodes a path in $\mathcal{T}$ as follows: at the root proceed left if $w_1=1$ and right if $w_1=0$. At the $(i-1)$th vertex of the path, proceed left if $w_i=1$ or right if $w_i=0$. We will map this path to the walk-with-jumps path of the walk with the given initial data and jump time sequence $((i_1-1)m,\hdots,(i_k-1)m)$, where $i_1 < \hdots < i_k$ is the set of $i\in\{1,\hdots,n\}$ such that $w_i = 1$. 

Note that such a walk has $k+1$ walk segments, where $k$ is the number of indices $i$ such that $w_i = 1$; and for $1< j\leq k$, the $j^{\rm th}$ walk segment has length $s(i_j-i_{j-1})m$. (And the first has length $s(i_1-1)m$, and the last, $s(n-i_k+1)m$.) The idea is to divide each walk segment into equal-length subsegments of length $sm$, then map the path encoded by $w$ via a continuous map taking $p_0$ to $o$, such that the $i^{\rm th}$ edge of the path in $\mathcal{T}$ maps to one of these subsegments if $w_i=0$ (ie.~$i\ne i_j$ for any $j\le k$) and to the union of a single jump segment with the subsequent walk sub-segment if $w_i=1$. We prescribe that each edge of the path in $T$ have length $1$ and require the restriction to each edge to have constant speed.

Let $\mathcal{T}_0$ be the union of the paths in $\mathcal{T}$ encoded by all binary sequences of length $n$, a finite subtree. There is a well-defined map on $\mathcal{T}_0$ that restricts on each such path to the one defined in the paragraph above: for any point $p\in\mathcal{T}_0$ there is a unique embedded edge path in $\mathcal{T}_0$ joining $p_0$ to $p$ (since $\mathcal{T}_0$ is a tree), and for any two paths encoded by binary sequences containing this as a sub-path, the maps to the corresponding walk-with-jumps paths agree on the sub-path, by Theorem \ref{precise sep criterion} and the canonical nature of the map's construction above.

Note that the walk with jumps corresponding to any path encoded by a binary sequence has minimum refractory length $R_{\min}$, since $m\ge R_{\min}$. And any two distinct such walks are distinct to resolution $\epsilon$, since $m\ge\epsilon$. It therefore follows again from Theorem \ref{precise sep criterion} that the map on $\mathcal{T}_0$ is an embedding.
\end{proof}

\section{Further Questions}\label{sec: FurQue}

The above results establish some of the basic features of walks with jumps, but leave open a number of questions of mathematical and biological interest.  First, there are a number of natural distances that can be defined on walk-with-jump paths, including the Hausdorff distance between the paths, the $\mathbb{H}^2$-distance between their endpoints, and the edit-length distance between their sequences of jump times  \cite{VictorPurpura1997}.  The relationship between these distances is at present unclear. Related to this, the structure  of the endpoint set for walks with jumps of a fixed duration and number of jumps is not known: it is connected, but its shape remains to be characterized.

\subsection{Stability}
This subsection describes a hyperbolic-geometric direction for future study. The observation below follows from standard facts:
\begin{proposition}\label{P: stability}
Let $\sigma_1,\sigma_2$ be two walks with jumps that satisfy the hypotheses of Proposition~\ref{trig qg} so that $\sigma_1,\sigma_2$ have the same endpoints. 
Then there exists $D = D(\ell,\delta_{\pi-\theta})$ so that the Hausdorff between $\sigma_1,\sigma_2$ is bounded above by $D$. 
\end{proposition}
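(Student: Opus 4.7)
The plan is to invoke the classical stability theorem for quasigeodesics in $\delta$-hyperbolic spaces (often called the Morse Lemma), applied to the walk-with-jumps paths associated to $\sigma_1$ and $\sigma_2$. Since both walks satisfy the hypotheses of Proposition~\ref{trig qg}, each associated walk-with-jumps path is a $(\lambda, 2\delta_{\pi-\theta})$-quasigeodesic in $\mathbb{H}^2$, where $\lambda = (\ell - 2\delta_{\pi-\theta})/\ell$ depends only on $\ell$ and $\delta_{\pi-\theta}$. Crucially, the two quasigeodesicity constants are the \emph{same} for both paths, since they depend only on the shared initial data (specifically on $\ell$ and $\theta$, via Proposition~\ref{trig qg}).

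Next, $\mathbb{H}^2$ is $\delta_0$-hyperbolic for a universal constant $\delta_0$ (for instance $\delta_0 = \ln(1+\sqrt{2})$, see e.g. \cite[Ch.~III.H]{BridsonHaefliger}). The stability theorem for quasigeodesics in such a space (\cite[Thm.~III.H.1.7]{BridsonHaefliger}) asserts the existence of a constant $D_0 = D_0(\lambda, 2\delta_{\pi-\theta}, \delta_0)$ such that every $(\lambda, 2\delta_{\pi-\theta})$-quasigeodesic in $\mathbb{H}^2$ lies in the $D_0$-neighborhood of any geodesic segment joining its endpoints, and conversely. Because only universal hyperbolicity of $\mathbb{H}^2$ and the quasigeodesic constants feed into $D_0$, we may regard $D_0$ as a function of $\ell$ and $\delta_{\pi-\theta}$ alone.

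Apply this to the two walk-with-jumps paths of $\sigma_1$ and $\sigma_2$, which by assumption share their initial point $o$ and their endpoint. Let $\eta$ be the geodesic segment in $\mathbb{H}^2$ joining these two common endpoints. Both paths lie in the $D_0$-neighborhood of $\eta$, and $\eta$ lies in the $D_0$-neighborhood of each path. By the triangle inequality for Hausdorff distance this gives $d_{\mathit{Haus}}(\sigma_1, \sigma_2) \le 2D_0$, so we set $D = D(\ell, \delta_{\pi-\theta}) \doteq 2D_0$ and conclude the result.

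The only mildly delicate point is to verify that the stability theorem as typically stated applies to our paths; this is routine because a walk-with-jumps path is a continuous broken-geodesic parametrization on a compact interval, and the standard proofs of quasigeodesic stability (e.g. via the taming lemma turning a quasigeodesic into a continuous one, then bounding excursions by the thin-triangles condition) handle exactly this setting without modification.
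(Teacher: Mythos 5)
Your proof is correct and follows essentially the same route as the paper's: both invoke the quasigeodesicity of walk-with-jumps paths established in Proposition~\ref{trig qg} and then apply quasigeodesic stability (\cite[Theorem III.H.1.7]{BridsonHaefliger}). You simply spell out the intermediate steps (comparing each path to the geodesic $\eta$ joining the common endpoints and using the triangle inequality for Hausdorff distance to get $D = 2D_0$) that the paper leaves implicit.
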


Proposition~\ref{P: stability} follows immediately from Proposition~\ref{trig qg} and quasigeodesic stability (see \cite[Theorem III.H.1.7]{BridsonHaefliger}). 
At large scale, Proposition~\ref{P: stability} supports the heuristic that two walks with jumps that have the same endpoints have similar behavior. 
Here are two natural questions that we have investigated but do not have clear answers to:
\begin{enumerate}
    \item For what initial data and at what scale is the bound $D = D(\ell,\delta_{\pi-\theta})$ on Hausdorff distance between walks-with-jumps with the same endpoints effective relative the overall length of the associated walks-with-jumps paths? Compounding this problem is the fact that the value $D$ can be hard to estimate. For example, the proof of \cite[Theorem III.H.1.7]{BridsonHaefliger} shows that $D$ exists and provides an indirect method to compute $D$ but there is no explicit formula for a good estimate.  
    \item Is there a natural way to translate between differences in walk data and Hausdorff distance between two walks with jumps? Could this be used with Proposition~\ref{P: stability} to distinguish walks with jumps paths from their data?
\end{enumerate}

\subsection{Other Directions}
Biological considerations suggest other directions for study and extensions. Neural spike trains are often considered to be samples from a point process; it is therefore of interest to determine how the point process model (e.g., Poisson, Poisson with refractory period, Markov, etc.), impacts the distribution of endpoints and the extent to which walks with jumps may cross. Finally, just as the activity of a single neuron can be formalized as a point process, multineuronal activity can be formalized as a labeled point process. In analogy with the  walk-with-jump model of a single neuron's activity, multineuronal activity could then be modeled as a walk with jumps in a higher-dimensional space, possibly hyperbolic or with hyperbolic subspaces, in which each neuron's activity corresponds to a different kind of jump.

\bibliography{biblio}{}
\bibliographystyle{plain}
\end{document}